\newtheorem{theorem}{Theorem}[section]
\newtheorem{lemma}[theorem]{Lemma}
\newtheorem{corollary}[theorem]{Corollary}
\newtheorem{proposition}[theorem]{Proposition}
\theoremstyle{definition}
\newtheorem{definition}[theorem]{Definition}
\newtheorem{example}[theorem]{Example}
\theoremstyle{remark}
\numberwithin{equation}{section}
\begin{document}

\title[Knot concordance invariants from Seiberg--Witten theory]{Knot concordance invariants from Seiberg--Witten theory and slice genus bounds in $4$-manifolds}
\author{David Baraglia}

\address{School of Mathematical Sciences, The University of Adelaide, Adelaide SA 5005, Australia}

\email{david.baraglia@adelaide.edu.au}


\date{\today}

\begin{abstract}

We construct a new family of knot concordance invariants $\theta^{(q)}(K)$, where $q$ is a prime number. Our invariants are obtained from the equivariant Seiberg--Witten--Floer cohomology, constructed by the author and Hekmati, applied to the degree $q$ cyclic cover of $S^3$ branched over $K$. In the case $q=2$, our invariant $\theta^{(2)}(K)$ shares many similarities with the knot Floer homology invariant $\nu^+(K)$ defined by Hom and Wu. Our invariants $\theta^{(q)}(K)$ give lower bounds on the genus of any smooth, properly embedded, homologically trivial surface bounding $K$ in a definite $4$-manifold with boundary $S^3$.

\end{abstract}

\maketitle


\section{Introduction}

In this paper we introduce a series of new knot concordance invariants. Our invariants are obtained from the equivariant Seiberg--Witten--Floer cohomology, constructed by the author and Hekmati \cite{bh}, applied to cyclic branched covers of knots. Our invariants can be used to bound the slice genus of knots, or more generally the genus of any smooth, properly embedded, homologically trivial surface in a definite $4$-manifold with $S^3$ boundary which bounds the knot.

For each prime $q$, we obtain a knot concordance invariant $\theta^{(q)}(K)$. For simplicity we will restrict attention to the $q=2$ invariant in the introduction and write $\theta(K) = \theta^{(2)}(K)$.

\begin{theorem}\label{thm:theta1}
The invariant $\theta(K)$ is a knot concordance invariant valued in the non-negative integers. Moreover $\theta$ satisfies the following properties:
\begin{itemize}
\item[(1)]{$-\sigma(K)/2 \le \theta(K) \le g_4(K)$.}
\item[(2)]{$\theta(K_1 + K_2) \le \theta(K_1) + \theta(K_2)$.}
\item[(3)]{Let $K_+,K_-$ be knots where $K_-$ is obtained from $K_+$ by changing a positive crossing into a negative crossing. Then
\[
0 \le \theta(K_+) - \theta(K_-) \le 1.
\]
}
\item[(4)]{If $K$ is quasi-alternating, then 
\[
\theta(K) = \begin{cases} -\sigma(K)/2, & \sigma(K) \le 0, \\ 0, & \sigma(K) > 0. \end{cases}
\]
}
\item[(5)]{If $\delta(K) < -\sigma(K)/2$ and $\sigma(K) \le 0$, then $\theta(K) \ge 1 +\sigma(K)/2$. Here $\delta(K)$ is the Manolescu--Owens knot concordance invariant \cite{mo}.
}
\end{itemize}
\end{theorem}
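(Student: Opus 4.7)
The plan is to interpret $\theta(K)$ as a numerical invariant extracted from the $\mathbb{Z}/2$-equivariant Seiberg--Witten--Floer cohomology of the double branched cover $\Sigma_2(K)$, with the $\mathbb{Z}/2$-action given by the deck transformation. In analogy with the Hom--Wu invariant $\nu^+$, I expect $\theta(K)$ to be defined as the minimal grading at which a canonical class survives under a distinguished module action on this equivariant Floer cohomology. Concordance invariance will then reduce to functoriality of equivariant SWF cohomology under the equivariant integer homology cobordism obtained as the double branched cover of a smooth concordance in $S^3 \times [0,1]$.

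For item (1), the upper bound $\theta(K) \le g_4(K)$ would be proved by lifting a smoothly embedded surface $F \subset B^4$ with $\partial F = K$ and $g(F) = g_4(K)$ to the double branched cover $W = \Sigma_2(B^4, F)$; this gives an equivariant cobordism from $S^3$ to $\Sigma_2(K)$ whose intersection form and first Betti number are controlled by $g(F)$, and the bound should follow from a cobordism inequality for equivariant SWF cohomology. The lower bound $-\sigma(K)/2 \le \theta(K)$ is the corresponding equivariant Fr\o yshov-type inequality: any negative-definite equivariant filling of $\Sigma_2(K)$ enforces a relation between $\theta$ and the signature, and the $G$-signature theorem identifies the resulting defect with $\sigma(K)$.

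Items (2) and (3) will follow from standard cobordism constructions lifted to the branched covers. For (2) I would use the identification $\Sigma_2(K_1 \# K_2) \cong \Sigma_2(K_1) \# \Sigma_2(K_2)$ as equivariant $3$-manifolds together with a connected-sum sub-additivity formula for the invariant. For (3), a crossing change is realised by attaching a $2$-handle along a small unknot encircling the crossing, and in the double branched cover this becomes an equivariant cobordism whose contribution is expected to bound $|\theta(K_+)-\theta(K_-)| \le 1$; the sign of the difference would then be pinned down by observing that one of the two possible crossing-change cobordisms is negative definite, yielding $0 \le \theta(K_+) - \theta(K_-)$.

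For item (4), when $K$ is quasi-alternating the manifold $\Sigma_2(K)$ is an L-space whose $d$-invariant is determined by $\sigma(K)$, so the equivariant Floer cohomology should have a rigid Smith-theoretic structure from which the explicit formula for $\theta$ can be read off. Item (5) would be handled by comparing $\theta$ with the Manolescu--Owens invariant via the forgetful map from equivariant to non-equivariant Floer cohomology, using an identification of $\delta(K)$ with an appropriate normalisation of $d(\Sigma_2(K))$: the hypothesis $\delta(K) < -\sigma(K)/2$ forces a strict inequality in the tower structure of the equivariant cohomology, which bumps $\theta$ up by at least $1 + \sigma(K)/2$. The main technical obstacle is establishing the precise equivariant cobordism inequality underlying part (1), since it drives the slice-genus bound, the crossing-change inequality, and, through the comparison spectral sequence, the link with $\delta$.
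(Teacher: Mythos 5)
Your outline is conceptually aligned with the paper at the level of ingredients (equivariant $d$-invariants of $\Sigma_2$, equivariant Fr\o yshov inequalities for branched covers, connected-sum subadditivity, and L-space rigidity for quasi-alternating knots), but two points of mechanism differ from the paper in ways worth flagging.

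First, the paper defines
\[
\theta(K) = \max\{0,\ j(-K) - \sigma(K)/2\},
\]
where $j(-K)$ is the least $j$ for which a normalized equivariant $d$-invariant $\xi_j(-K) = \tfrac14\delta_j(-K) + \tfrac18\sigma(-K)$ vanishes. The mirror $-K$ is essential, and the $\max$ with $0$ is what makes $\theta$ non-negative by fiat. As a result, the lower bound $\theta(K) \ge -\sigma(K)/2$ is \emph{immediate from the definition} (it follows from $j(-K)\ge 0$), and no Fr\o yshov-type inequality is needed for it; your proposal has the roles reversed. The equivariant Fr\o yshov inequality is instead what powers the upper bound $\theta(K) \le g_4(K)$: lifting a minimal-genus slice surface in $B^4$ to its double branched cover $W$ gives $b_+(W) = g_4(K) + \sigma(K)/2$, and the inequality forces $\xi_j(K)$ to vanish for $j \ge g_4(K) - \sigma(K)/2$. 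Note that $W$ here is generally \emph{not} negative definite; what matters is that $H^+(B^4)^{G}=0$, so the equivariant Fr\o yshov inequality still applies. Your proposed ``negative-definite equivariant filling of $\Sigma_2(K)$'' does not match the geometry of this step.

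Second, for item (3), the paper's proof of the two-sided inequality $0 \le \theta(K_+)-\theta(K_-) \le 1$ requires \emph{two} distinct cobordism constructions and a case split on whether $\sigma(K_-) - \sigma(K_+)$ is $0$ or $2$: the genus-one surface in $[0,1]\times S^3$ obtained by resolving the crossing-change homotopy gives one of the two inequalities $\xi_{j+1}(K_-)\le\xi_j(K_+)$ or $\xi_{j+1}(K_+)\le\xi_j(K_-)$, while the genus-zero surface in a blow-up of $[0,1]\times S^3$ (applied with orientation reversed) gives the complementary inequality $\xi_j(K_+)\le\xi_j(K_-)$ or $\xi_j(K_-)\le\xi_j(K_+)$. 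Your phrase ``one of the two possible crossing-change cobordisms is negative definite'' gestures at this, but a single cobordism gives only one side; the other requires the blow-up cobordism with $b_-(W)^G=0$, applied to $\overline{W}$. Apart from these two points — and the omitted justification that $\theta$ is integer-valued, which follows from a mod-$4$ congruence for $\delta_j$ — the proposed route matches the paper's.
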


Note that property (1) above means that the slice genus bound $g_4(K) \ge \theta(K)$ can be thought of as a refinement of the bound $g_4(K) \ge -\sigma(K)/2$. A similar situation occurs for the Ozsv\'ath--Szab\'o tau-invariant $\tau(K)$, namely there exists a non-negative, integer-valued concordance invariant $\nu^+(K)$ which refines the slice genus bound $g_4(K) \ge \tau(K)$ in the sense that $\tau(K) \le \nu^+(K) \le g_4(K)$ \cite{howu}. There are multiple similarities between $\theta$ and $\nu^+$. From \cite{howu} and \cite{bcg}, we have that $\nu^+(K)$ satisfies properties (2)-(5) of Theorem \ref{thm:theta1}. Another similarity is that $\nu^+(K)$ arises as the smallest $j$ such that $V_j(K) = 0$, where $V_j(K)$ is a sequence of decreasing knot concordance invariants originally defined in \cite{ras} (in a different notation), while $\theta(K)$ is similarly defined as the smallest $j$ for which a certain sequence of concordance invariants vanish.

For prime knots with $9$ or fewer crossings we have found that in all but one case we have $\theta(K) = \nu^+(K)$ and $\theta(-K) = \nu^+(-K)$. The exception is the knot $K = 9_{42}$ which has $\theta(K) = \nu^+(K) = 0$, but $\theta(-K) = 1$ and $\nu^+(-K) = 0$. It is interesting to note that $9_{42}$ is the only prime knot with $9$ or fewer crossings which is neither quasialternating nor quasipositive (or the mirror of a quasipositive knot).

The invariant $\theta(K)$ sometimes gives a better slice genus bound than can be obtained from the $\tau, \nu^+$ or $s$ invariants, or from the Levine--Tristram signature function.

\begin{example}\label{ex:bound}
Let $K = -9_{42} \# Wh(T_{2,2a+1})$, where $Wh(T_{2,2a+1})$ is the untwisted Whitehead double of the torus knot $T_{2,2a+1}$ with positive clasp and $a \ge 1$. Then $\tau(K) = 1$, $s(K)/2 = 1$, where $s$ is the Rasmussen $s$-invariant and $|\sigma_K(\omega)/2 | \le 1$ for all $\omega$, where $\sigma_K(\omega)$ is the Levine--Tristram signature function. We also have $\nu^+(K) \le 1$ and $\nu^+(-K) = 0$. Thus all these invariants give a slice genus bound $g_4(K) \ge 1$. However we will see that $\theta(K) = 2$, giving the better bound $g_4(K) \ge 2$. In fact, since $9_{42}$ and $Wh(T_{2,2a+1})$ both have slice genus $1$ and unknotting number $1$ we get that $g_4(K) = u(K) = 2$.
\end{example}

While it is not always easy to compute $\theta(K)$, we can obtain a lower bound for it in some circumstances. For instance, we have the following:

\begin{theorem}
Let $\Sigma_2(K)$ denote the double cover of $S^3$ branched over $K$ and let $\mathfrak{s}_0$ denote the unique spin$^c$-structure which arises from a spin structure. Let $\ell(K)$ denote the lowest $i$ for which $HF^+_i( \Sigma_2(K) , \mathfrak{s}_0 )$ is non-zero (where $HF^+$ is taken with coefficients in $\mathbb{F} = \mathbb{Z}/2\mathbb{Z}$). Then
\[
\theta(K) \ge \ell( -K ) - \frac{3 \sigma(K)}{4}.
\]
\end{theorem}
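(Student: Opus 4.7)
The plan is to deduce the inequality by comparing $\theta(K)$ with ordinary Heegaard Floer data for $\Sigma_2(-K)$. The comparison goes through two steps: a forgetful map from the $\mathbb{Z}/2$-equivariant Seiberg--Witten--Floer cohomology of $\Sigma_2(K)$ used to define $\theta(K)$ to non-equivariant SWF cohomology, followed by the Kutluhan--Lee--Taubes isomorphism identifying the latter with $HF^+$ of $-\Sigma_2(K) = \Sigma_2(-K)$.

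Recall that $\theta(K)$ is the smallest $j \ge 0$ at which a canonical family $\{x_j\}$ of equivariant classes vanishes, so lower bounds on $\theta(K)$ come from establishing nonvanishing of $x_j$ for small $j$. Forgetting the $\mathbb{Z}/2$-action sends $x_j$ to a class $\bar{x}_j$ in non-equivariant SWF cohomology of $(\Sigma_2(K), \mathfrak{s}_0)$; nonvanishing of $\bar{x}_j$ implies nonvanishing of $x_j$. Under the Kutluhan--Lee--Taubes isomorphism together with Poincar\'e duality (cohomology of $Y$ versus homology of $-Y$), $\bar{x}_j$ is identified with a canonical element of $HF^+(\Sigma_2(-K), \mathfrak{s}_0)$ whose absolute $\mathbb{Q}$-grading depends linearly on $j$. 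The argument for nonvanishing of this element reduces to the statement that it lies in a degree at least $\ell(-K)$, which by the very definition of $\ell(-K)$ is where nonzero elements of $HF^+$ first appear; combined with the canonical ``cobordism-from-$B^4$'' origin of the class, this yields nonvanishing.

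The $-3\sigma(K)/4$ correction term arises from matching the absolute gradings. The SWF $\mathbb{Q}$-grading is calibrated by the index formula $d = (c_1(\mathfrak{s})^2 - 2\chi(W) - 3\sigma(W))/4$ applied to a spin$^c$ four-manifold $W$ bounding $\Sigma_2(K)$. Taking $W$ to be the double cover of $B^4$ branched over a Seifert surface for $K$, one has $\sigma(W) = -\sigma(K)$, and the term $-3\sigma(W)/4 = 3\sigma(K)/4$ is precisely the shift that enters the final bound.

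The main obstacle is the grading bookkeeping: matching the equivariant grading used to define $\theta(K)$ to the standard $\mathbb{Q}$-grading on $HF^+(\Sigma_2(-K))$ through orientation reversal and the bounding spin$^c$ 4-manifold, and verifying that the canonical image class $\bar{x}_j$ is detected by the lowest nonvanishing degree $\ell(-K)$ rather than by a different invariant such as the $d$-invariant of the tower in $HF^+$. Once these conventions are aligned, the inequality follows by comparing degrees.
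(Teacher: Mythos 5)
Your high-level plan --- pass from equivariant to non-equivariant Seiberg--Witten--Floer cohomology and then to $HF^+$ --- matches the paper, but the mechanism you propose for the central step would not work. You want to use a forgetful map $HSW^*_{\mathbb{Z}/2} \to HSW^*$ and argue that a ``canonical'' equivariant class $x_j$ is nonzero because its image $\bar{x}_j$ lies in degree at least $\ell(-K)$. This reasoning is backwards: $\ell(-K)$ is the \emph{lowest} degree in which nonzero classes exist, so being in degree $\ge \ell(-K)$ tells you nothing about nonvanishing; and the forgetful map can kill classes in any case, so even if $\bar{x}_j \ne 0$ were established it would not be the right thing to conclude from. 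What the paper actually proves is a degree bound, not a nonvanishing statement: $d_{\mathbb{Z}_2, Q^j}(\Sigma_2(K),\mathfrak{s}_0)$ is by definition a minimum degree $k$ (minus $j$), and $k$ is trivially at least the lowest nonzero degree $k_0$ of $HSW^*_{\mathbb{Z}_2}(\Sigma_2(K),\mathfrak{s}_0)$, giving $\delta_j(K) \ge 2(k_0 - 2j)$. The paper then bounds $k_0 \ge \ell^{(2)}(K)$ not via a forgetful map but via the Borel spectral sequence with $E_2^{p,n} = H^p(B\mathbb{Z}_2; HSW^n(\Sigma_2(K),\mathfrak{s}_0))$: every nonzero $E_2$ entry has $n \ge \ell^{(2)}(K)$, hence total degree $\ge \ell^{(2)}(K)$. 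That vanishing-of-$E_2$-in-low-degree argument is the missing ingredient, and it is not interchangeable with the image-of-a-class framing.

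There are also two bookkeeping issues. First, the paper invokes $HSW^*(Y,\mathfrak{s}_0) \cong HF^*_+(Y,\mathfrak{s}_0) \cong HF^+_*(Y,\mathfrak{s}_0)$ for the \emph{same} oriented $Y$, with no orientation reversal at the isomorphism step. The $-K$ in the conclusion has nothing to do with Floer-theoretic duality: it enters purely through the definition $\theta(K) = \max\{0,\, j(-K) - \sigma(K)/2\}$. The argument is carried out for $\Sigma_q(K)$ to obtain $2j^{(q)}(K,m) \ge \ell^{(q)}(K) - m/2 + \sigma^{(q)}(K)/4$, and only at the very end is $K$ replaced by $-K$. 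Second, the $-3\sigma(K)/4$ is not produced by calibrating absolute gradings against the branched double cover of $B^4$ over a Seifert surface; it simply collects the $\sigma/4$ coming from the threshold $\delta_j^{(q)}(K) \le m - \sigma^{(q)}(K)/2$ in the definition of $j^{(q)}(K,m)$ with the $\sigma/2$ shift built into the definition of $\theta$. Those normalizations are fixed at the outset, and the content of the proof is entirely in the degree bound above.
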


In particular this implies the following bound on the slice genus
\[
g_4(K) \ge \ell( -K ) - \frac{3 \sigma(K) }{4}.
\]
Conversely, if the value of $g_4(K)$ is known then we can use this result to give an upper bound for $\ell(-K)$:
\[
\ell(-K) \le g_4(K) + \frac{3 \sigma(K)}{4}.
\]

The invariant $\theta(K)$ is not only a lower bound for the slice genus $g_4(K)$, but also for the genus of any null-homologous surface bounding $K$ in a negative definite $4$-manifold:

\begin{theorem}\label{thm:hslice}
Let $X$ be a smooth, compact, oriented $4$-manifold with boundary $S^3$ and with negative definite intersection form. Suppose also that $H_1(X ; \mathbb{Z})=0$. Let $K \subset S^3$ be a knot. Then for any, smooth, oriented, properly embedded, null-homologous surface $\Sigma \subset X$ bounding $K$, we have
\[
g(\Sigma) \ge \theta(K),
\]
where $g(\Sigma)$ is the genus of $\Sigma$.
\end{theorem}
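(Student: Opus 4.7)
The plan is to pass to the double cover of $X$ branched along $\Sigma$. Set $W = \Sigma_2(X,\Sigma)$; this branched cover is well-defined because $\Sigma$ is null-homologous. The $4$-manifold $W$ comes equipped with a smooth involution $\tau$ whose fixed locus is a smoothly embedded surface $\widetilde{\Sigma}\cong\Sigma$, and has boundary $\partial W = \Sigma_2(K)$ with $\tau|_{\partial W}$ equal to the deck transformation used in the definition of $\theta(K)$.

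The first step is to verify that $W$ has negative definite intersection form and satisfies $H_1(W;\mathbb{Z})=0$. Both properties follow from standard branched-cover computations: a transfer argument together with $H_1(X;\mathbb{Z})=0$ yields $H_1(W;\mathbb{Z})=0$, while the null-homology of $\Sigma$ combined with $X$ being negative definite ensures that $W$ is negative definite as well. Thus $(W,\tau)$ is an equivariant negative definite filling of $\Sigma_2(K)$ carrying a fixed surface of genus $g(\Sigma)$.

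The second step is to invoke the cobordism principle for $\theta$ built into the equivariant Seiberg--Witten--Floer machinery of \cite{bh}. Since $\theta(K)$ is the smallest $j$ at which a certain sequence of equivariant Floer invariants of $\Sigma_2(K)$ vanishes, the equivariant Bauer--Furuta map induced by $(W,\tau)$ transports these invariants to the trivial filling side, shifted in degree by an amount determined by the topology of the fixed locus, in particular by the genus of $\widetilde{\Sigma}$. Combining this with negative definiteness to suppress irreducible contributions and force the unique spin$^c$ structure arising from a spin structure to extend yields the desired bound $\theta(K)\leq g(\Sigma)$.

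The main obstacle is the equivariant index calculation: one must show that the presence of the genus-$g(\Sigma)$ fixed surface in the equivariant negative definite cobordism shifts the relevant equivariant Floer level by exactly $g(\Sigma)$, rather than by some larger quantity involving extra invariants of $(W,\tau)$ such as signature defects or the self-intersection of $\widetilde{\Sigma}$. This is an equivariant analogue of the classical Seiberg--Witten slice genus bound, performed on the branched cover with its involution, and is the technical heart of the argument.
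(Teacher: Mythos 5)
Your overall strategy --- pass to the branched double cover $W \to X$ and apply the equivariant Seiberg--Witten--Floer machinery of \cite{bh} --- is the right one, and it matches the paper's approach (the theorem is Corollary \ref{cor:h2} of Theorem \ref{thm:in2}). However, there is a concrete error in step one that would make the argument as you have outlined it fail.

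You claim that since $\Sigma$ is null-homologous and $X$ is negative definite, the branched double cover $W$ is also negative definite. This is false in general. By Lemma \ref{lem:branch} applied with $q=2$, $[\Sigma]^2 = 0$ and $b_+(X) = 0$, one gets
\[
b_+(W) = g(\Sigma) + \frac{\sigma(K)}{2},
\]
which is generically positive (for example, if $K$ is the unknot and $g(\Sigma) \ge 1$, or whenever $g(\Sigma) > -\sigma(K)/2$). So $W$ is not a negative definite filling; it is a cobordism with possibly nonzero $b_+$. What does hold, and what the argument actually needs, is the weaker statement that the $\mathbb{Z}_2$-invariant part of $H^+(W)$ vanishes: $H^+(W)^{\mathbb{Z}_2} \cong H^+(X) = 0$. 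This is the hypothesis under which the equivariant Froyshov inequality \cite[Theorem~5.3]{bh} applies. You cannot ``suppress irreducible contributions'' by negative definiteness of $W$, because $W$ is not negative definite; the vanishing of the \emph{equivariant} part of $H^+$ is what the inequality uses.

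The second gap is that you assert the Floer level shifts by exactly $g(\Sigma)$, but the actual shift in the Froyshov-type inequality is governed by $b_+(W) = g(\Sigma) + \sigma(K)/2$. The extra $\sigma(K)/2$ is not an error term to be discarded; it is precisely the quantity that the definition of $\theta(K) = \max\{0,\, j(-K) - \sigma(K)/2\}$ is built to absorb. Concretely, the paper proceeds by choosing (via Proposition \ref{prop:spinc2}) a $\mathbb{Z}_2$-invariant spin$^c$-structure $\mathfrak{s}$ on $W$ restricting to the distinguished spin$^c$-structure $\mathfrak{s}_0$ on $\partial W = \Sigma_2(K)$ and with $c_1(\mathfrak{s}) = \pi^*(c)$ for a characteristic $c$ on $X$ (here Donaldson diagonalization is used to pick $c$ with $c^2 = -b_2(X)$), applies \cite[Theorem~5.3]{bh} with the index $j = b_+(W) = g(\Sigma) + \sigma(K)/2$, and derives $\delta_j(-K) \le -\sigma(K)/2$, equivalently $\xi_j(-K) \le 0$, hence $j(-K) \le g(\Sigma) + \sigma(K)/2$ and $\theta(K) \le g(\Sigma)$. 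Your proposal elides this bookkeeping, and the claim that the shift is $g(\Sigma)$ alone would give a wrong bound for knots with $\sigma(K) \neq 0$.
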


For a $4$-manifold $X$ with boundary $S^3$, a knot $K \subset S^3$ is said to be {\em $H$-slice} in $X$ if $K$ bounds a smooth, oriented, properly embedded, null-homologous disc $\Sigma \subset X$. More generally, we define the {\em $H$-slice genus of $K$ in $X$} to be the minimum genus of such a null-homologous surface $\Sigma \subset X$ bounding $K$ and we denote it by $g_H(K,X)$. Thus Theorem \ref{thm:hslice} says that $\theta$ provides a lower bound for the $H$-slice genus of any knot in a negative definite $4$-manifold $X$ with boundary $S^3$ and with $H_1(X ; \mathbb{Z})=0$.

Similar bounds on the $H$-slice genus of $K$ in negative definite manifolds have been obtained by different methods. From \cite{os1}, we have that $g_H(K,X) \ge \tau(K)$, where $\tau$ is the Ozsv\'ath--Szab\'o $\tau$-invariant. In the special case that $X = \#^t \overline{\mathbb{CP}^2} \setminus B^4$, there is a similar inequality $g_H(K,X) \ge s(K)/2$ \cite{mmsw}, where $s$ is the Rasmussen $s$-invariant \cite{ras2}. Yet another bound is given by the Levine--Tristram signature \cite{cn}, which in the case of a negative defininte $4$-manifold with $H_1(X;\mathbb{Z})=0$ gives
\[
g_H(K,X) \ge -\sigma_K(\omega)/2
\]
for all $\omega \in S^1_{!}$. Here $\sigma_K(\omega)$ is the Levine--Tristram signature of $K$ and $S^1_{!}$ is the set of unit complex numbers that are not zeros of any integral coefficient Laurent polynomial $p$ with $p(1)=1$. Note that $S^1_{!}$ contains all roots of unity of prime power order. In particular, since the signature of $K$ is given by $\sigma(K) = \sigma_K(-1)$, we have $g_H(K,X) \ge -\sigma(K)/2$.

We have examples where the genus bound given by $\theta(K)$ is better than that given by $\tau$, $s$ or $\sigma_K(\omega)$. Indeed the knots $K = -9_{42} \# Wh(T_{2,2a+1})$ are an example of this. In fact, since these knots satisfy $\theta(K) = 2 = g_4(K)$, we see that the minimal genus of any null-homologous, properly embedded surface bounding $K$ in any negative definite $4$-manifold with zero integral first homology is exactly $2$.

\subsection{Structure of the paper}

In Section \ref{sec:branch} we collect various results concerning branched covers and spin$^c$-structures that will needed for the main results. In Section \ref{sec:esw} we recall briefly the details of the equivariant Seiberg--Witten--Floer cohomology groups needed for this paper. In Section \ref{sec:conc} we define the concordance invariants $\theta^{(q)}$ and prove various properties of them. In Section \ref{sec:ex} we give some examples of computations of the invariant $\theta$. Finally in Section \ref{sec:definite} we prove the slice genus bounds for surfaces in definite $4$-manifolds bounding a knot $K$.

\subsection{Acknowledgements}

We thank Pedram Hekmati and Hokuto Konno for comments on a draft of this paper.

\section{Branched covers and spin$^c$-structures}\label{sec:branch}

In this section $q$ is allowed to be a prime power. In later sections we will require $q$ to be prime. Let $K \subset S^3$ be a knot in $S^3$. We denote by $Y = \Sigma_q(K)$ the degree $q$ cyclic cover of $S^3$ branched over $K$. Since $q$ is a prime power it follows that $Y$ is a rational homology $3$-sphere \cite[Corollary 3.2]{liv}. Let $\pi : Y \to S^3$ denote the covering map and let $\sigma : Y \to Y$ denote the generator of the natural $\mathbb{Z}_q$-action. From \cite[Corollary 2.2]{jab}, we have that any spin$^c$-structure on $Y \setminus \pi^{-1}(K)$ uniquely extends to $Y$. Now since $H^2( S^3 \setminus K ; \mathbb{Z}) = 0$, there is a unique spin$^c$-structure on $S^3 \setminus K$. The pullback of this spin$^c$-structure under $\pi$ extends uniquely to a spin$^c$-structure on $Y$. Following \cite{jab}, we denote this spin$^c$-structure by $\mathfrak{s}_0 = \mathfrak{s}_0(K , q)$ and we call $\mathfrak{s}_0$ the distinguished spin$^c$-structure on $Y$. We note here that $\mathfrak{s}_0$ is $\sigma$-invariant. To see this first note that the restriction of $\mathfrak{s}_0$ to $Y \setminus \pi^{-1}(K)$ is $\sigma$-invariant because it is a pullback from $S^3 \setminus K$. Then since spin$^c$-structures extend uniquely over $\pi^{-1}(K)$, we get that $\mathfrak{s}_0$ is $\sigma$-invariant.

Now we consider branched covers of $4$-manifolds with boundary. We will be concerned specifically with the case that $X$ is a compact, oriented, smooth $4$-manifold whose boundary is either $S^3$ or two copies of $S^3$. In the latter case, we view one copy of $S^3$ as an ingoing boundary and the other as an outgoing boundary so that $X$ is a cobordism from $S^3$ to $S^3$. Suppose $\Sigma \subset X$ is a smooth, connected, oriented, properly embedded surface which meets the outgoing boundary in a knot $K_1$ and meets the ingoing boundary (if there is one) in a knot $K_0$.

We assume that $H_1(X ; \mathbb{Z}) = 0$. Then $H_2( X , \partial X ; \mathbb{Z}) \cong H^2(X ; \mathbb{Z})$ is torsion-free.

\begin{lemma}\label{lem:meridian}
Suppose that $H_1(X ; \mathbb{Z}) = 0$. If $[\Sigma] \in H_2(X , \partial X ; \mathbb{Z})$ is trivial, then $H_1(X \setminus \Sigma ; \mathbb{Z}) \cong \mathbb{Z}$. If $[\Sigma] \in H_2(X , \partial X ; \mathbb{Z})$ is $k$ times a primitive element, where $k \ge 1$, then $H_2( X \setminus \Sigma ; \mathbb{Z}) \cong \mathbb{Z}/k\mathbb{Z}$. In either case $H_2(X \setminus \Sigma ; \mathbb{Z})$ is generated by a meridian around $\Sigma$.
\end{lemma}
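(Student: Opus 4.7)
The plan is to compute $H_1(X \setminus \Sigma; \mathbb{Z})$ from the long exact sequence of the pair $(X, X \setminus \Sigma)$, after first identifying $H_*(X, X \setminus \Sigma)$ by excision and the Thom isomorphism applied to the normal bundle of $\Sigma$.

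First I would take a closed tubular neighborhood $\nu = \nu(\Sigma)$ of $\Sigma$ in $X$ (respecting the fact that $\Sigma$ is properly embedded, so $\nu$ meets $\partial X$ in a tubular neighborhood of $\partial \Sigma$). By excision,
\[
H_*(X, X \setminus \Sigma) \cong H_*(\nu, \nu \setminus \Sigma).
\]
Since $\Sigma$ and $X$ are oriented, the normal bundle $\nu_\Sigma \to \Sigma$ is an oriented rank-$2$ real vector bundle, so the Thom isomorphism gives
\[
H_k(\nu, \nu \setminus \Sigma) \cong H_{k-2}(\Sigma),
\]
and the connecting map sends the Thom class to a meridian disk, whose boundary is a meridian circle. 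Because $\Sigma$ is connected, this yields $H_2(X, X \setminus \Sigma) \cong \mathbb{Z}$ generated by a meridian disk (so that the connecting map $H_2(X, X \setminus \Sigma) \to H_1(X \setminus \Sigma)$ sends this generator to the class of a meridian), and $H_1(X, X \setminus \Sigma) = 0$.

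Next I would plug this into the long exact sequence of $(X, X \setminus \Sigma)$, using $H_1(X) = 0$:
\[
H_2(X) \xrightarrow{\,\iota\,} H_2(X, X \setminus \Sigma) \cong \mathbb{Z} \xrightarrow{\,\partial\,} H_1(X \setminus \Sigma) \to 0.
\]
Under the Thom isomorphism identification, the map $\iota$ is the intersection pairing $\alpha \mapsto \alpha \cdot [\Sigma]$ against the relative class $[\Sigma] \in H_2(X, \partial X; \mathbb{Z})$. When $[\Sigma] = 0$, this map is zero and we conclude $H_1(X \setminus \Sigma) \cong \mathbb{Z}$. When $[\Sigma] = k P$ with $P$ primitive in the torsion-free group $H_2(X, \partial X; \mathbb{Z})$, non-degeneracy of the Lefschetz-duality pairing $H_2(X) \times H_2(X, \partial X) \to \mathbb{Z}$ gives an $\alpha \in H_2(X)$ with $\alpha \cdot P = 1$, so the image of $\iota$ is exactly $k\mathbb{Z}$, yielding $H_1(X \setminus \Sigma) \cong \mathbb{Z}/k\mathbb{Z}$. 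In both cases the image under $\partial$ of the Thom generator is a meridian of $\Sigma$, so $H_1(X \setminus \Sigma)$ is generated by such a meridian.

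The main subtlety will be the geometric set-up: since $\Sigma$ has boundary $K \subset \partial X$ (or two knots when $X$ is a cobordism), I must choose the tubular neighborhood and the excision carefully so that it still decomposes as an honest oriented disk bundle over $\Sigma$ and the Thom isomorphism applies. Once this is handled, identifying the map $\iota$ with intersection against $[\Sigma]$ is standard, and the computation of the image in terms of divisibility of $[\Sigma]$ reduces to primitivity together with non-degeneracy of Lefschetz duality modulo torsion — which holds because $H^2(X) \cong H_2(X,\partial X)$ is torsion-free under the hypothesis $H_1(X;\mathbb{Z}) = 0$.
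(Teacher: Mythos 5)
Your proposal is correct and follows essentially the same route as the paper: the long exact sequence of the pair $(X, X\setminus\Sigma)$, excision to a tubular neighbourhood, the Thom isomorphism to identify $H_2(X, X\setminus\Sigma)\cong\mathbb{Z}$ with meridian-disc generator, the identification of the inclusion-induced map $H_2(X)\to H_2(X,X\setminus\Sigma)$ with intersection against $[\Sigma]$, and finally perfectness of the pairing $H_2(X)\times H_2(X,\partial X)\to\mathbb{Z}$ (which holds because $H_1(X;\mathbb{Z})=0$ makes both groups torsion-free). The only cosmetic difference is that you invoke Lefschetz duality explicitly to justify non-degeneracy, whereas the paper gives a short geometric argument by representing classes in $H_2(X)$ by embedded surfaces transverse to $\Sigma$ and simply records that the pairing is dual.
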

\begin{proof}
From the long exact sequence of the pair $(X , X \setminus \Sigma)$ we obtain an exact sequence
\[
H_2(X ; \mathbb{Z}) \to H_2(X , X \setminus \Sigma ; \mathbb{Z}) \buildrel \partial \over \longrightarrow H_1(X \setminus \Sigma ; \mathbb{Z}) \to H_1(X ; \mathbb{Z}) = 0.
\]
Let $N$ be a tubular neighbourhood of $\Sigma$, which we identify with the open unit disc bundle in the normal bundle of $\Sigma$. By excision and the Thom isomorphism, $H_2(X , X \setminus \Sigma ; \mathbb{Z}) \cong H_2( N , N \setminus \Sigma ; \mathbb{Z}) \cong \mathbb{Z}[ D ]$, where $D$ denotes the unit disc in a fibre of the normal bundle. The boundary of $[D]$ is a meridian, so we see that $H_1(X \setminus \Sigma ; \mathbb{Z})$ is generated by a meridian. It remains to determine the image of the map $H_2(X ; \mathbb{Z}) \to H_2(X , X \setminus \Sigma ; \mathbb{Z})$. Any class $u \in H_2(X ; \mathbb{Z})$ can be represented by a compact oriented embedded surface $S$ in the interior of $X$, meeting $\Sigma$ transversally. The image of $u = [S]$ in $H_2(X , X \setminus \Sigma ; \mathbb{Z})$ under the excision isomorphism is easily seen to be $\langle u , [\Sigma] \rangle [D]$, where $\langle u , [\Sigma] \rangle$ denotes the intersection pairing between $H_2(X ; \mathbb{Z})$ and $H_2(X , \partial X ; \mathbb{Z})$. This is a dual pairing, hence the image of $H_2(X ; \mathbb{Z}) \to H_2(X , X \setminus \Sigma ; \mathbb{Z})$ is exactly $k[D]$ if $[\Sigma]$ is $k$ times a primitive element and is zero is $[\Sigma] = 0$.
\end{proof}

Assume that $H_1(X ; \mathbb{Z}) = 0$ and that $[\Sigma]$ is divisible by $q$. By Lemma \ref{lem:meridian}, it follows that there is a unique homomorphism $H_1(X \setminus \Sigma ; \mathbb{Z}) \to \mathbb{Z}_q$ sending the meridian to $1 \; ({\rm mod} \; q)$. Hence there is a uniquely determined cyclic branched cover $\pi : W \to X$ of degree $q$ and with branching set $\Sigma$. We let $\sigma : W \to W$ denote the generator of the natural $\mathbb{Z}_q$-action. If $\partial X = S^3$ and $\Sigma$ meets $\partial S^3$ in the knot $K_1$, then $\partial W = \Sigma_q(K_1)$ and the $\mathbb{Z}_q$-action on $W$ extends the $\mathbb{Z}_q$-action on $\Sigma_q(K_1)$. If $\partial X = S^3 \cup S^3$ and $\Sigma$ meets the ingoing and outgoing boundaries in $K_0$ and $K_1$, then $W$ is a $\mathbb{Z}_q$-equivariant cobordism from $\Sigma_q(K_0)$ to $\Sigma_q(K_1)$.

Given a knot $K$, let $\sigma_K(\omega)$ denote the Levine--Tristram signature of $K$ and define
\[
\sigma^{(q)}(K) = \sum_{j=1}^{q-1} \sigma_{K}( e^{2\pi i j/q}).
\]

Suppose that $\Sigma \subset D^4$ is a smooth, oriented, properly embedded surface bounding $K$ and let $W \to D^4$ be the $q$-fold cyclic branched cover of $D^4$ branched over $\Sigma$. Then the signature of $W$ equals $\sigma^{(q)}(K)$ \cite{kau}. We note here that when $q$ is odd, $\sigma^{(q)}(K)$ is divisible by $4$ because of the symmetry $\sigma_{K}(\omega) = \sigma_{K}(\omega^{-1})$ of the Levine--Tristram signature.

\begin{lemma}\label{lem:branch}
Let $X$ be a compact, oriented, smooth $4$-manifold with $H_1(X ; \mathbb{Z}) = 0$ and whose boundary is either $S^3$ or two copies of $S^3$. Let $\Sigma \subset X$ be a smooth, connected, oriented, properly embedded surface of genus $g$ which meets the outgoing boundary in a knot $K_1$ and meets the ingoing boundary (if there is one) in a knot $K_0$. Suppose that $[\Sigma] \in H_2(X , \partial X ; \mathbb{Z})$ is a multiple of a prime power $q$. Let $\pi : W \to X$ be the degree $q$ cyclic cover of $X$ branched over $\Sigma$. Then $b_1(W) = 0$. If $\partial X = S^3$, then
\begin{align*}
b_2(W) &=  q b_2(X) + (q-1)(2g), \\
\sigma(W) &= q \sigma(X) - \frac{(q^2-1)}{3q} [\Sigma]^2 + \sigma^{(q)}(K_1), \\
b_+(W) &= q b_+(X) + (q-1)g - \frac{(q^2-1)}{6q}[\Sigma]^2 + \frac{\sigma^{(q)}(K_1)}{2} , \\
\quad b_-(W) &= q b_-(X) + (q-1)g + \frac{(q^2-1)}{6q}[\Sigma]^2 - \frac{\sigma^{(q)}(K_1)}{2}.
\end{align*}
and if $\partial X$ is two copies of $S^3$, then
\begin{align*}
b_2(W) &=  q b_2(X) + (q-1)(2g), \\
\sigma(W) &= q \sigma(X) - \frac{(q^2-1)}{3q} [\Sigma]^2 + \sigma^{(q)}(K_1) - \sigma^{(q)}(K_0), \\
b_+(W) &= q b_+(X) + (q-1)g - \frac{(q^2-1)}{6q}[\Sigma]^2 + \frac{\sigma^{(q)}(K_1)}{2} - \frac{\sigma^{(q)}(K_0)}{2} , \\
\quad b_-(W) &= q b_-(X) + (q-1)g + \frac{(q^2-1)}{6q}[\Sigma]^2 - \frac{\sigma^{(q)}(K_1)}{2} + \frac{\sigma^{(q)}(K_0)}{2}.
\end{align*}

\end{lemma}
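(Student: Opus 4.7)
The plan is to reduce the lemma to the closed-manifold case by capping. For each $S^3$ boundary component of $X$, I would attach a $4$-ball with the appropriate orientation so that $\hat X$ becomes a coherently oriented closed $4$-manifold, and extend $\Sigma$ through each cap by pushing in a Seifert surface $F_i$ for the corresponding boundary knot $K_i$. The result is a closed oriented $4$-manifold $\hat X$ with $\sigma(\hat X) = \sigma(X)$, together with a closed oriented surface $\hat\Sigma = \Sigma \cup (\bigsqcup_i F_i)$ of genus $\hat g = g + \sum_i g(F_i)$. Since each $F_i$ is null-homologous in its $4$-ball, one has $[\hat\Sigma]^2 = [\Sigma]^2$ and the divisibility of $[\hat\Sigma]$ by $q$ is preserved, so the $q$-fold cyclic cover $\hat W \to \hat X$ branched along $\hat\Sigma$ exists and decomposes as $\hat W = W \cup (\bigsqcup_i W_i)$, where $W_i$ is the $q$-fold cyclic branched cover of the corresponding capping $4$-ball over $F_i$.

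Next I would apply the classical closed-manifold formulas
\[
\chi(\hat W) = q\chi(\hat X) - (q-1)\chi(\hat\Sigma), \qquad \sigma(\hat W) = q\sigma(\hat X) - \frac{q^2-1}{3q}[\hat\Sigma]^2,
\]
the first by a standard CW-count for branched covers and the second by the Rokhlin--Hirzebruch formula (a consequence of the $G$-signature theorem). For the caps I would use $\chi(W_i) = 1 + 2(q-1)g(F_i)$ (by the same Euler characteristic formula applied to $D^4$) together with Kauffman's identity $\sigma(W_i) = \pm\sigma^{(q)}(K_i)$, with sign determined by the orientation of the cap. Novikov additivity for the signature across the rational homology $3$-spheres $\Sigma_q(K_i)$ and ordinary additivity of the Euler characteristic (using $\chi(S^3) = 0$) then let me solve for $\chi(W)$ and $\sigma(W)$; the Seifert-surface genera $g(F_i)$ cancel out of the final expressions, leaving precisely the formulas stated in the lemma.

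The remaining and most delicate step is establishing $b_1(W) = 0$. My approach would be to pass to the unramified $q$-fold cyclic cover $W^\circ \to X^\circ = X \setminus N(\Sigma)$ determined by the surjection $H_1(X^\circ; \mathbb{Z}) \to \mathbb{Z}_q$ sending the meridian of $\Sigma$ to $1$, which exists and is unique by Lemma \ref{lem:meridian} combined with the divisibility of $[\Sigma]$ by $q$. Using the $\mathbb{Z}_q$-isotypic decomposition of $H^*(W^\circ; \mathbb{Q})$ (whose invariants coincide with $H^*(X^\circ; \mathbb{Q})$), the hypothesis $H_1(X; \mathbb{Z}) = 0$, and the observation that the preimage of a meridian becomes null-homologous in $W$ once the tubular neighbourhood of $\tilde\Sigma$ is filled in, a careful Mayer--Vietoris analysis of $W = W^\circ \cup N(\tilde\Sigma)$ forces $H_1(W; \mathbb{Q}) = 0$. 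With $b_1(W) = 0$ in hand, Poincar\'e--Lefschetz duality applied to $W$ (whose boundary is a disjoint union of rational homology spheres) gives $b_3(W) = c - 1$, where $c$ is the number of boundary components, so $b_2(W) = \chi(W) + c - 2$ produces the stated Betti number formula, and $b_\pm(W) = \tfrac{1}{2}(b_2(W) \pm \sigma(W))$ completes the proof.
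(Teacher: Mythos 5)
Your strategy for the Euler characteristic, signature and $b_\pm$ formulas is essentially the same as the paper's: cap off each $S^3$ boundary with a $4$-ball, extend $\Sigma$ by a pushed-in Seifert surface, apply the $G$-signature theorem to the resulting closed branched cover, and undo the capping with Novikov additivity and Kauffman's formula $\sigma(W_i)=\sigma^{(q)}(K_i)$. The bookkeeping with $b_3(W)=c-1$ and $b_2(W)=\chi(W)+c-2$ is correct.

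The gap is in the argument for $b_1(W)=0$. You pass to the unramified cover $W^\circ \to X^\circ$ and observe that the $\mathbb{Z}_q$-invariant part of $H_1(W^\circ;\mathbb{Q})$ is $H_1(X^\circ;\mathbb{Q})$, which is small and killed once $N(\widetilde{\Sigma})$ is glued back in. But that is only the invariant isotypic piece. The exact sequence of the pair $(W,W^\circ)$, together with excision and the Thom isomorphism, gives
\[
H_0(\widetilde{\Sigma};\mathbb{Q}) \longrightarrow H_1(W^\circ;\mathbb{Q}) \longrightarrow H_1(W;\mathbb{Q}) \longrightarrow 0,
\]
so $H_1(W;\mathbb{Q})$ is the quotient of $H_1(W^\circ;\mathbb{Q})$ by the class of the meridian of $\widetilde{\Sigma}$. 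The meridian lies in the invariant subspace, so this only removes (part of) the invariant piece; the non-trivial isotypic components of $H_1(W^\circ;\mathbb{Q})$ are untouched, and there is no a priori reason for them to vanish --- they are a relative twisted Alexander-type module, and in fact the hypothesis that $q$ is a \emph{prime power} is exactly what is needed to control them. Nothing in your sketch invokes this hypothesis, which is a warning sign that the argument cannot be correct as stated. The paper avoids this entirely: it caps off to a closed manifold $W'$, cites Rokhlin's theorem (\cite{ro}) that $b_1(W')=0$ in the closed case (which is where the prime-power hypothesis is actually used), and then shows $H_1(W;\mathbb{Q})\hookrightarrow H_1(W';\mathbb{Q})$ via Mayer--Vietoris across the rational homology spheres $\Sigma_q(K_i)$. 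Since you already construct $\hat W$ for the signature computation, you should use it for $b_1$ too, rather than attempting a direct isotypic analysis.
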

\begin{proof}
We give the proofs in the case that $\partial X = S^3$. The case that $\partial X$ has two components is similar.

Choose a surface $\Sigma_1 \subset D^4$ bounding $K_1$. Let $X'$ be the closed $4$-manifold $X' = X \cup_{S^3} \overline{D^4}$, and let $\Sigma' \subset X'$ be the closed embedded surface given by $\Sigma' = \Sigma \cup_{K_1} \overline{\Sigma_1}$. Let $W_1 \to D^4$ be the degree $q$ cyclic covering of $D^4$ branched over $\Sigma_1$. Recall that $\sigma(W_1)$ is given by $\sigma^{(q)}(K_1)$. Let $\pi : W' \to X'$ be the degree $q$ cyclic cover of $X'$ branched over $\Sigma'$. Thus $W' = W \cup_{Y} \overline{W_1}$, where $Y = \Sigma_q(K_1)$. Since $Y$ is a rational homology $3$-sphere, the Mayer--Vietoris sequence implies that $H_1(W ; \mathbb{Q}) \to H_1(W' ; \mathbb{Q})$ is injective. But $b_1(W') = 0$ by \cite{ro}, hence $b_1(W) = 0$ as well.

Next we compute the second Betti number of $W$. Since $\pi : W \to X$ is a degree $q$ covering branched over $\Sigma$, we clearly have
\[
\chi(W) = q \chi(X) - (q-1) \chi(\Sigma).
\]
But since $b_1(X) = b_1(W) = 0$ and $\chi(\Sigma) = 1-2g$, we get
\[
b_2(W) = q b_2(X) + (q-1)(2g).
\]
Now consider the signature of $W$. Novikov additivity gives $\sigma(W') = \sigma(W) - \sigma(W_1) = \sigma(W) - \sigma^{(q)}(K_1)$. On the other hand the $G$-signature theorem for $G = \mathbb{Z}_q$ acting on $W'$ gives
\[
\sigma(X) = \sigma(X') = \frac{\sigma(W')}{q} + \frac{q^2-1}{3q} [\widetilde{\Sigma'}]^2
\]
where $\widetilde{\Sigma'}$ is the preimage of $\Sigma'$ under $\pi : W' \to X'$. The degree of the normal bundle of $\Sigma'$ in $X'$ is $q$ times the degree of the normal bundle of $\widetilde{\Sigma'}$ in $W'$, so $[\widetilde{\Sigma'}]^2 = [\Sigma']^2/q = [\Sigma]^2/q$ and so
\[
\sigma(W') = q \sigma(X) - \frac{(q^2-1)}{3q} [\Sigma]^2.
\]
Thus
\[
\sigma(W) = q \sigma(X) - \frac{(q^2-1)}{3q} [\Sigma]^2 + \sigma^{(q)}(K_1).
\]
Using $b_{\pm}(W) = (b_2(W) \pm \sigma(W))/2$, we get
\[
b_+(W) = q b_+(X) + (q-1)g - \frac{(q^2-1)}{6q}[\Sigma]^2 + \frac{\sigma^{(q)}(K_1)}{2}
\]
and
\[
b_-(W) = q b_-(X) + (q-1)g + \frac{(q^2-1)}{6q}[\Sigma]^2 - \frac{\sigma^{(q)}(K_1)}{2}.
\]

\end{proof}

Next we turn our attention to spin$^c$-structures on $W$. Let $X$ and $\pi : W \to X$ be as in Lemma \ref{lem:branch}.

\begin{proposition}\label{prop:spinc}
For any spin$^c$-structure $\mathfrak{s}_X$ on $X$, there exists a $\sigma$-invariant spin$^c$-structure $\mathfrak{s}$ on $W$ such that the restriction of $\mathfrak{s}$ to $W \setminus \pi^{-1}(\Sigma)$ is the pullback of $\mathfrak{s}_X$ restricted to $X \setminus \Sigma$. Moreover, the restriction of $\mathfrak{s}$ to any boundary component $\Sigma_q(K_i)$ of $W$ coincides with the distinguished spin$^c$-structure $\mathfrak{s}_0$.
\end{proposition}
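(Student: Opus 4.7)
Set $\widetilde{\Sigma} := \pi^{-1}(\Sigma)$. My plan is to transfer $\mathfrak{s}_X$ to $W$ away from the branch locus by pullback, extend $\sigma$-equivariantly across $\widetilde{\Sigma}$, and then identify the boundary restriction with $\mathfrak{s}_0$.

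First, since $\pi \colon W \setminus \widetilde{\Sigma} \to X \setminus \Sigma$ is an honest $\mathbb{Z}_q$-covering, the pullback $\mathfrak{s}' := \pi^{*}(\mathfrak{s}_X|_{X \setminus \Sigma})$ is a well-defined spin$^c$-structure on $W \setminus \widetilde{\Sigma}$, and because $\pi \circ \sigma = \pi$ it is automatically $\sigma$-invariant. The main technical step is to extend $\mathfrak{s}'$ across $\widetilde{\Sigma}$. I would approach this through the long exact sequence of the pair $(W, W \setminus \widetilde{\Sigma})$ combined with the Thom isomorphism $H^{k}(W, W \setminus \widetilde{\Sigma}; \mathbb{Z}) \cong H^{k-2}(\widetilde{\Sigma}; \mathbb{Z})$, which is available because the normal bundle of $\widetilde{\Sigma}$ in $W$ is oriented. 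The obstruction to extending $c_1(\mathfrak{s}')$ lives in $H^{3}(W, W \setminus \widetilde{\Sigma}; \mathbb{Z}) \cong H^{1}(\widetilde{\Sigma}; \mathbb{Z})$, realized by the connecting map. Naturality of the long exact sequences under the map of pairs $(W, W \setminus \widetilde{\Sigma}) \to (X, X \setminus \Sigma)$, together with the fact that $c_1(\mathfrak{s}_X|_{X \setminus \Sigma})$ already extends to $X$, forces this obstruction to vanish. For the extension to be characteristic for $TW$, one compares $TW|_{\widetilde{\Sigma}} = T\widetilde{\Sigma} \oplus N_{\widetilde{\Sigma}}$ with $\pi^{*} TX|_{\widetilde{\Sigma}} = T\widetilde{\Sigma} \oplus N_{\widetilde{\Sigma}}^{\otimes q}$; the resulting mod $2$ discrepancy $(q-1)c_1(N_{\widetilde{\Sigma}})$ can be absorbed by adjusting by an integer multiple of the Poincar\'e dual of $\widetilde{\Sigma}$, a correction which does not affect the restriction to $W \setminus \widetilde{\Sigma}$.

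To produce a $\sigma$-invariant extension, note that the set of extensions of $\mathfrak{s}'$ to a spin$^c$-structure on $W$ is a torsor over the image $I$ of $H^2(W, W \setminus \widetilde{\Sigma}; \mathbb{Z}) \to H^2(W; \mathbb{Z})$, on which $\sigma^{*}$ acts. If $\mathfrak{s}$ is any extension, then $\sigma^{*}\mathfrak{s} - \mathfrak{s}$ lies in $I$ and is annihilated by the norm $N = 1 + \sigma^{*} + \cdots + (\sigma^{*})^{q-1}$; a Tate cohomology argument for the cyclic $\mathbb{Z}_q$-action on $I$ then shows this class lies in the image of $1 - \sigma^{*}$, so $\mathfrak{s}$ can be corrected by a class in $I$ to yield a $\sigma$-invariant extension.

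Finally, the boundary assertion is direct. By construction $\mathfrak{s}|_{\Sigma_q(K_i) \setminus \pi^{-1}(K_i)} = \pi^{*}(\mathfrak{s}_X|_{S^3 \setminus K_i})$, and since $H^2(S^3 \setminus K_i ; \mathbb{Z}) = 0$ this is the pullback of the unique spin$^c$-structure on $S^3 \setminus K_i$, which is exactly the data used in the excerpt to define $\mathfrak{s}_0$. Unique extension across $\pi^{-1}(K_i)$ in the $3$-manifold (the result of Jabuka quoted above) then gives $\mathfrak{s}|_{\Sigma_q(K_i)} = \mathfrak{s}_0$. I expect the main obstacle to be the $\sigma$-equivariant extension: producing \emph{some} extension across $\widetilde{\Sigma}$ is a routine Thom-isomorphism computation, but guaranteeing $\sigma$-invariance requires the cyclic-group cohomology step above.
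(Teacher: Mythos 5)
Your overall strategy — pull back away from the branch locus, extend across $\widetilde{\Sigma}$, identify the boundary restriction with $\mathfrak{s}_0$ — is the right shape, and your final paragraph about the boundary is correct and essentially the same as the paper's. The paper, however, handles the extension across $\widetilde{\Sigma}$ geometrically: it constructs an explicit $\sigma$-equivariant spin$^c$-structure $\mathfrak{s}_{\widetilde{N}}$ on a tubular neighbourhood $\widetilde{N}$ of $\widetilde{\Sigma}$ using the complex structure on $T\widetilde{N}\cong T\widetilde{\Sigma}\oplus \widetilde{N}$, then shows that both $\mathfrak{s}'$ and $\mathfrak{s}_{\widetilde{N}}$, when restricted to $\widetilde{N}\setminus\widetilde{\Sigma}$, descend to $N\setminus\Sigma$ and there agree with $\mathfrak{s}_X|_{N\setminus\Sigma}$ (the key input being that $H^2(\Sigma;\mathbb{Z})=0$, so $N$ carries a unique spin$^c$-structure). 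This produces, by gluing, an honest equivariant spin$^c$-structure with a $\mathbb{Z}_q$-lift on the spinor bundles, not merely one whose isomorphism class is $\sigma$-fixed.

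There is a genuine gap in your extension step. You compute the obstruction to extending $c_1(\mathfrak{s}')$, and naturality does kill that class — but extending the determinant line bundle is weaker than extending the spin$^c$-structure. Spin$^c$-structures form a \emph{torsor} over $H^2$, not a group, and when $H^2(W;\mathbb{Z})$ has $2$-torsion (which is not excluded — only $b_1(W)=0$ is known), a class can fail to be the $c_1$ of an extension of $\mathfrak{s}'$ even when it is characteristic and restricts correctly on $c_1$. The naturality argument for the spin$^c$-structure itself requires compatible reference spin$^c$-structures on $W$ and $X$ intertwined by $\pi$, and producing such a pair is precisely the content of the proposition; you cannot assume it. The cure is a local computation near $\widetilde{\Sigma}$, which is exactly the paper's argument with $\mathfrak{s}_{\widetilde{N}}$. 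Relatedly, your remark that the extension must separately be ``corrected to be characteristic'' signals that you are really extending a line bundle rather than a spin$^c$-structure; once one extends the structure itself (as the paper does), $c_1$ is automatically characteristic.

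Your Tate-cohomology argument for $\sigma$-invariance is an elegant substitute for the paper's equivariant gluing, but note two caveats. First, $I$ (the image of $\mathbb{Z}\cong H^2(W,W\setminus\widetilde{\Sigma};\mathbb{Z})\to H^2(W;\mathbb{Z})$, generated by $PD[\widetilde{\Sigma}]$) carries trivial $\sigma$-action, so $\hat H^{-1}(\mathbb{Z}_q;I)=\ker(q\cdot)\big/0$. This vanishes when $I\cong\mathbb{Z}$, but if $PD[\widetilde{\Sigma}]$ is torsion of order divisible by $q$ — which is exactly what can happen in the $H$-slice case, where $q\cdot PD[\widetilde{\Sigma}]=\pi^* PD[\Sigma]=0$ — then $\hat H^{-1}\ne 0$ and the argument does not close. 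Second, even when it succeeds it only makes the \emph{isomorphism class} fixed, whereas the paper builds an equivariant structure with an actual lift of $\sigma$ to the spinor bundle, which is what the equivariant Seiberg--Witten theory in the rest of the paper consumes.
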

\begin{proof}
We will construct $\mathfrak{s}$ as an equivariant spin$^c$-structure, by which we mean that we will construct the corresponding spinor bundles and equip them with a $\mathbb{Z}_q$-action lifting $\sigma$. Let $\widetilde{\Sigma} = \pi^{-1}( \Sigma)$ be the fixed point set of $\sigma$. Let $S_X^{\pm} \to X$ denote the spinor bundles on $X$ corresponding to the spin$^c$-structure $\mathfrak{s}_X$. Let $S_0^{\pm}$ denote the restriction of $S_X^{\pm}$ to $X \setminus \Sigma$ and let $\widetilde{S_0}^{\pm}$ denote the pullbacks of $S_0^{\pm}$ to $W \setminus \widetilde{\Sigma}$ under $\pi$. Since $\pi : W \setminus \widetilde{\Sigma} \to X \setminus \Sigma$ is an unbranched cover, $\widetilde{S_0}^{\pm}$ define a spin$^c$-structure $\mathfrak{s}'$ on $W \setminus \widetilde{\Sigma}$. Furthermore, since $\widetilde{S_0}^{\pm}$ are obtained by pullback, they carry a natural action of $\mathbb{Z}_q$ which makes $\mathfrak{s}'$ into an equivariant spin$^c$-structure.

We will show that $\mathfrak{s}'$ extends equivariantly over $\widetilde{\Sigma}$. Choose a $\sigma$-invariant tubular neighbourhood $\widetilde{N}$ of $\widetilde{\Sigma}$, which we may identify with the normal bundle of $\widetilde{\Sigma}$. Let $N = \pi(\widetilde{N})$ be the image of $\widetilde{N}$ under $\pi$. Then $N$ is a tubular neighbourhood of $\Sigma$ which can be identified with the normal bundle of $\Sigma$. Then we have an unbranched covering $\widetilde{N} \setminus \widetilde{\Sigma} \to N \setminus \Sigma$.

Choose a $\sigma$-invariant metric $g$ on $W$. Then we obtain a $\sigma$-equivariant isomorphism $T(\widetilde{N}) \cong T\widetilde{\Sigma} \oplus \widetilde{N}$. This is a direct sum of $\sigma$-equivariant complex line bundles, so has a $\sigma$-equivariant complex structure. Associated to this complex structure is $\sigma$-equivariant spin$^c$-structure which we denote by $\mathfrak{s}_{\widetilde{N}}$. If we can show that the restriction of $\mathfrak{s}_{\widetilde{N}}$ to $\widetilde{N} \setminus \widetilde{\Sigma}$ is equivariantly isomorphic to the restriction of $\mathfrak{s}'$ to $\widetilde{N} \setminus \widetilde{\Sigma}$, then it follows that $\mathfrak{s}'$ extends equivariantly over $\widetilde{\Sigma}$, since we can glue $\mathfrak{s}'$ and $\mathfrak{s}_{\widetilde{N}}$ together over $\widetilde{N} \setminus \widetilde{\Sigma}$ using this isomorphism.

Now since $\sigma$ acts freely on $\widetilde{N} \setminus \widetilde{\Sigma}$ with quotient space $N \setminus \Sigma$, we have that any equivariant spin$^c$-structure on $\widetilde{N} \setminus \widetilde{\Sigma}$ descends to a spin$^c$-structure on $N \setminus \Sigma$. Moreover any two equivariant spin$^c$-structures on $\widetilde{N} \setminus \widetilde{\Sigma}$ are equivariantly isomorphic if and only if the corresponding descended spin$^c$-structures on $N \setminus \Sigma$ are isomorphic.

Consider the restriction of $\mathfrak{s}'$ to $\widetilde{N} \setminus \widetilde{\Sigma}$. This is the pullback of $\mathfrak{s}_X |_{N \setminus \Sigma}$, hence it descends to $\mathfrak{s}_X |_{N \setminus \Sigma}$. Note that this spin$^c$-structure extends to $N$ as $\mathfrak{s}_X |_N$. Next consider the restriction of $\mathfrak{s}_{\widetilde{N}}$ to $\widetilde{N} \setminus \widetilde{\Sigma}$. Since $N = \widetilde{N}/\sigma$, it is easily seen that $\mathfrak{s}_{\widetilde{N}}$ descends to the spin$^c$-structure on $N \setminus \Sigma$ associated to the complex structure on $T(N \setminus \Sigma) \cong T\Sigma \oplus N$. Since this isomorphism extends over $\Sigma$ to an isomorphism $T(N) \cong T\Sigma \oplus N$, it follows that the spin$^c$-structure obtained by descending $\mathfrak{s}_{\widetilde{N}}$ to $N \setminus \Sigma$ extends to a spin$^c$-structure on $N$.

Lastly, since $N$ deformation retracts to $\Sigma$ and $H^2(\Sigma ; \mathbb{Z}) = 0$ (because $\Sigma$ has non-empty boundary), it follows that there is a unique spin$^c$-structure on $N$. Hence the spin$^c$-structures on $N \setminus \Sigma$ obtained by descending $\mathfrak{s}'$ and $\mathfrak{s}_{\widetilde{N}}$ to $N \setminus \Sigma$ are isomorphic, since they both extend to the unique spin$^c$-structure on $N$. This proves the claim that $\mathfrak{s}'$ extends equivariantly to a spin$^c$-structure on $W$, which we denote by $\mathfrak{s}$. Since $\mathfrak{s}$ is an equivariant spin$^c$-structure, its isomorphism class is $\sigma$-invariant.

It remains to show that the restriction of $\mathfrak{s}$ to any boundary component $\Sigma_q(K_i)$ of $W$ coincides with the distinguished spin$^c$-structure $\mathfrak{s}_0$. From the construction of $\mathfrak{s}$, it follows that the restriction of $\mathfrak{s}$ to $\Sigma_q(K_i) \setminus \pi^{-1}(K_i)$ is the pullback of the unique spin$^c$-structure on $S^3 \setminus K_i$. But this property uniquely characterises the distinguished spin$^c$-structure $\mathfrak{s}_0$, so the restriction of $\mathfrak{s}$ to $\Sigma_q(K_i)$ is the distinguished spin$^c$-structure.
\end{proof}

\begin{lemma}\label{lem:cohom}
The pullback $\pi^* : H^2(X ; \mathbb{Z}) \to H^2(W ; \mathbb{Z})$ is injective and the image is precisely the $\sigma$-invariant elements of $H^2(W ; \mathbb{Z})$.
\end{lemma}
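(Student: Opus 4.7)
The plan is to handle injectivity, the easy inclusion into $\sigma$-invariants, and the surjection onto $\sigma$-invariants as three separate steps.

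For injectivity, I would first note that $H^2(X;\mathbb{Z})$ is torsion-free by the universal coefficient theorem (since $H_1(X;\mathbb{Z})=0$), so it embeds into $H^2(X;\mathbb{Q})$ and it suffices to argue rationally. Rationally, the transfer map $\pi_*$ available for the finite degree-$q$ branched cover satisfies $\pi_*\pi^* = q\cdot\mathrm{id}$, giving injectivity of $\pi^*$ on $H^2(X;\mathbb{Q})$. The inclusion $\mathrm{image}(\pi^*) \subseteq H^2(W;\mathbb{Z})^\sigma$ is immediate from $\pi\circ\sigma=\pi$, which gives $\sigma^*\pi^*=\pi^*$.

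The substantive step is to show every $\sigma$-invariant class is pulled back. I would use the line bundle interpretation $H^2(W;\mathbb{Z}) \cong \mathrm{Pic}(W)$, writing $\alpha = c_1(L)$ for a smooth complex line bundle $L$ on $W$. The $\sigma$-invariance of $\alpha$ gives $\sigma^*L \cong L$. The obstruction to upgrading such an isomorphism to a $\mathbb{Z}_q$-equivariant structure on the total space of $L$ (that is, a lift of $\sigma$ whose $q$-th power is the identity) lies in $H^2(\mathbb{Z}_q; U(1))$, which vanishes because $U(1)$ is divisible. Different equivariant structures on $L$ form a torsor over $H^1(\mathbb{Z}_q; U(1)) \cong \mathbb{Z}_q$, and this freedom corresponds exactly to the character by which $\mathbb{Z}_q$ acts on the fibers of $L$ over the connected fixed submanifold $\widetilde{\Sigma}$. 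Choosing the equivariant structure so that this character is trivial makes the $\mathbb{Z}_q$-action on $L$ free over $W\setminus\widetilde{\Sigma}$ and fibrewise trivial over $\widetilde{\Sigma}$, and hence the quotient $\bar L = L/\mathbb{Z}_q$ is a smooth line bundle on $X = W/\mathbb{Z}_q$ with $\pi^*\bar L \cong L$. Therefore $\alpha = \pi^*c_1(\bar L)$ is in the image.

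The main obstacle is the distinction between three notions: $\sigma$-invariance of an isomorphism class, the existence of a $\mathbb{Z}_q$-equivariant structure, and the existence of such a structure descending to a genuine line bundle on the quotient. The first gap is closed by the vanishing $H^2(\mathbb{Z}_q; U(1))=0$, and the second by the $\mathbb{Z}_q$-torsor structure on equivariant lifts combined with the connectedness of $\widetilde{\Sigma}$, which together allow us to kill the character on fixed fibers.
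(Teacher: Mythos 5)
Your argument takes a genuinely different route from the paper's. The paper works entirely in Borel equivariant cohomology: it runs the Leray--Serre spectral sequence for $W_{hG}\to BG$ (using $b_1(W)=0$ and the existence of a fixed point $w$) to identify $H^2_G(W,w;\mathbb{Z})\cong H^2(W;\mathbb{Z})^G$, then uses an equivariant Mayer--Vietoris comparison (with $G$ acting trivially on $X$ and the cover $\{X\setminus\Sigma,N\}$ pulled back to $W$) to prove $\pi^*:H^2_G(X;\mathbb{Z})\to H^2_G(W;\mathbb{Z})$ is an isomorphism, and finally descends to the non-equivariant statement via the relative groups at $w$ and $x=\pi(w)$. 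You instead work geometrically with line bundles and descend through the branched covering. Both approaches have to confront the same two issues: killing the obstruction to an equivariant structure, and descending past the fixed locus; you handle these bundle-theoretically where the paper handles them spectral-sequence-theoretically.

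Two points in your sketch need tightening. First, the obstruction to refining $\sigma^*L\cong L$ to a $\mathbb{Z}_q$-equivariant structure lives in $H^2\bigl(\mathbb{Z}_q;\,C^\infty(W,\mathbb{C}^*)\bigr)$ (with $\mathbb{Z}_q$ acting by $\sigma^*$ on functions), and the set of equivariant structures, once nonempty, is a torsor over $H^1\bigl(\mathbb{Z}_q;\,C^\infty(W,\mathbb{C}^*)\bigr)$ — not, on the nose, $H^2(\mathbb{Z}_q;U(1))$ and $H^1(\mathbb{Z}_q;U(1))$. These groups are indeed $0$ and $\mathbb{Z}_q$ as you assert, but seeing this (e.g.\ via the exponential sequence, or by showing the norm map onto $\sigma$-invariant $\mathbb{C}^*$-valued functions is surjective by extracting a $q$-th root) uses $H^1(W;\mathbb{Z})=0$, equivalently $b_1(W)=0$, which is available via Lemma~\ref{lem:branch} but which you never invoke; the constant-coefficient computation alone does not suffice. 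Second, the final descent — that a $\mathbb{Z}_q$-equivariant line bundle on $W$ whose isotropy representation over the fixed locus $\widetilde\Sigma$ is trivial descends to a smooth line bundle on $X=W/\mathbb{Z}_q$ — is correct (it is the standard criterion for an equivariant bundle to be pulled back from the quotient), but is not automatic from the freeness on $W\setminus\widetilde\Sigma$ alone and deserves either a citation or a local argument in a $\mathbb{Z}_q$-invariant tubular neighbourhood of $\widetilde\Sigma$. With these points filled in, your proof is a valid and arguably more geometric alternative to the spectral sequence computation in the paper.
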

\begin{proof}
For $a \in H^2(X ; \mathbb{Z})$, $\pi_*(\pi^* a) = qa$. Injectivity of $\pi^*$ follows, since $H^2(X ; \mathbb{Z})$ is torsion free.

To prove that the image of $\pi^*$ is precisely the $\sigma$-invariant elements of $H^2(W ; \mathbb{Z})$ we will compute the degree $2$ equivariant cohomology of $W$ in two ways. First using the Leray--Serre spectral sequence for the Borel fibration. This spectral sequence has $E_2^{p,q} = H^p( BG ; H^q(W ; \mathbb{Z}))$ where $G = \mathbb{Z}_q$ and abuts to the equivariant cohomology $H^*_G(W ; \mathbb{Z})$. Since $b_1(W) = 0$, we have $H^1(W ; \mathbb{Z}) = 0$ and so the only non-zero terms in $E_2^{p,q}$ with $p+q=2$ are $E_2^{0,2} = H^2(W ; \mathbb{Z})^G$ and $E_2^{2,0} = H^2(BG ; \mathbb{Z}) = H^2_G(pt ; \mathbb{Z}) \cong \mathbb{Z}_q$. Furthermore $E_2^{0,1} = E_2^{2,1} = E_2^{3,0} = 0$, so there are no non-zero differentials out of $E_r^{0,2}$ or in to $E_r^{2,0}$ for any $r$. So we get an exact sequence 
\[
0 \to H^2_G( pt ; \mathbb{Z} ) \to H^2_G(W ; \mathbb{Z}) \to H^2(W ; \mathbb{Z})^G \to 0.
\]
Furthermore, the action of $G$ on $W$ has fixed points. Taking a fixed point $w \in W$ gives a splitting
\[
H^2_G(W ; \mathbb{Z}) \cong H^2(W ; \mathbb{Z})^G \oplus H^2_G( w ; \mathbb{Z} )
\]
and it follows that we have an isomorphism
\[
H^2_G(W , w ; \mathbb{Z}) \cong H^2(W ; \mathbb{Z})^G.
\]
Next, we compute the degree $2$ equivariant cohomology using the Mayer--Vietoris sequence. Let $G$ act trivially on $X$ so that $\pi : W \to X$ may be regarded as an equivariant map. Consider the open cover of $X$ by $X \setminus \Sigma$, $N$, where $N$ is a tubular neighbourhood of $\Sigma$. Let $\widetilde{\Sigma} = \pi^{-1}(\Sigma)$ and $\widetilde{N} = \pi^{-1}(N)$. Then $W \setminus \widetilde{\Sigma}$, $\widetilde{N}$ is an open $G$-invariant cover of $W$. Since the cover of $W$ is obtained by pulling back the cover of $X$ we get a commutative diagram between Mayer--Vietors sequences (all cohomology groups having coefficients in $\mathbb{Z}$):
\[
\xymatrix@C-=0.4cm{
H^1_G(\widetilde{\Sigma}) \! \oplus \! H^1_G(W \! \setminus \! \widetilde{\Sigma}) \ar[r]^-{\tilde{i}} \! & \!  H^1_G(\widetilde{N} \!  \setminus \! \widetilde{\Sigma})  \ar[r] \! & \! H^2_G(W) \ar[r] \! & \! H^2_G(\widetilde{\Sigma}) \! \oplus \! H^2_G(W \! \setminus \! \widetilde{\Sigma}) \ar[r]^-{\tilde{j}} \! & \! H^2_G(\widetilde{N} \! \setminus \! \widetilde{\Sigma})  \\
H^1_G(\Sigma) \! \oplus \! H^1_G(X \! \setminus \! \Sigma) \ar[r]^-{i} \ar[u]_-{\pi^*} \! & \!  H^1_G(N \!  \setminus \! \Sigma) \ar[r] \ar[u]_-{\pi^*} \! & \! H^2_G(X) \ar[r] \ar[u]_-{\pi^*} \! & \! H^2_G(\Sigma) \! \oplus \! H^2_G(X \! \setminus \! \Sigma) \ar[r]^-{j} \ar[u]_-{\pi^*} \! & \! H^2_G(N \! \setminus \! \Sigma) \ar[u]_-{\pi^*}
}
\]
We claim that $\pi^*$ induces isomorphisms $coker( \tilde{i} ) \cong coker(i)$ and $ker(\tilde{j}) \cong ker(j)$. These claims imply that $\pi^* : H^2_G(X) \to H^2_G(W)$ is an isomorphism. Consider first $i$ and $\tilde{i}$. We have that $\pi : \widetilde{\Sigma} \to \Sigma$ is a homeomorphism. Also $G$ acts freely on $\widetilde{W} \setminus \widetilde{\Sigma}$ and $\widetilde{N} \setminus \widetilde{\Sigma}$ with quotient spaces $X \setminus \Sigma$ and $N \setminus \Sigma$. Furthermore $H^1_G(pt ; \mathbb{Z}) = 0$ so it follows that the first two vertical maps in the above diagram are isomorphisms. This shows that $\pi^* : coker(i) \to coker(\tilde{i})$ is an isomorphism. Similarly the last two vertical maps fit into a commutative diagram with exact columns:
\[
\xymatrix@R-=0.6cm{
0 & 0 \\
H^2_G(\widetilde{\Sigma}) \oplus H^2_G(W \setminus \widetilde{\Sigma}) \ar[u] \ar[r]^-{\tilde{j}} & H^2_G(\widetilde{N} \setminus \widetilde{\Sigma}) \ar[u] \\
H^2_G(\Sigma) \oplus H^2_G(X \setminus \Sigma) \ar[u]_-{\pi^*} \ar[r]^-{j} & H^2_G(N \setminus \Sigma) \ar[u]_-{\pi^*} \\
H^2_G(pt) \ar[u]_-{(0 , \iota )} \ar[r]^-{\cong} & H^2_G(pt) \ar[u]_-{\iota} \\
0 \ar[u] & 0 \ar[u]
}
\]
where $\iota : H^2_G(pt) \to H^2_G(X \setminus \Sigma)$ and $\iota : H^2_G(pt) \to H^2(N \setminus \Sigma)$ are the maps induced by $X \setminus \Sigma \to pt$ and $N \setminus \Sigma \to pt$. It follows that $\pi^* : ker(j) \to ker(\tilde{j})$ is an isomorphism.

We have proven that $\pi^* : H^2_G(X ; \mathbb{Z}) \to H^2_G(W ; \mathbb{Z})$ is an isomorphism. Let $w \in W$ be a fixed point and $x = \pi(w)$. Then we get an isomorphism $\pi^* : H^2( X , x ; \mathbb{Z}) \to H^2(W , w ; \mathbb{Z})$. But $H^2_G(X,x ; \mathbb{Z}) \cong H^2(X ; \mathbb{Z})$ via the forgetful map from equivariant to non-equivariant cohomology and similarly $H^2_G(W , w ; \mathbb{Z}) \cong H^2( W )^G$ via the forgetful map (corresponding to the $p=0$ column of the spectral sequence $E_r^{p,q}$). Hence $\pi^* : H^2(X ; \mathbb{Z}) \to H^2(W ; \mathbb{Z})^G$ is an isomorphism.
\end{proof}

Since each boundary component of $X$ is a $3$-sphere, we have $H^2(X ; \mathbb{Z})$ $\cong$ \linebreak $H^2(X , \partial X ; \mathbb{Z})$ and that the intersection form on $H^2(X , \partial X ; \mathbb{Z})$ is unimodular. Furthermore $H_1(X ; \mathbb{Z}) = 0$ implies that $H^2(X ; \mathbb{Z})$ and $H^2(X , \partial X ; \mathbb{Z})$ are torsion-free. So spin$^c$-structures on $X$ are in bijection with characteristics $c \in H^2(X ; \mathbb{Z})$, that is, elements such that $\langle c , x \rangle = \langle x , x \rangle \; ({\rm mod} \; 2)$ for every $x \in H^2(X , \partial X ; \mathbb{Z})$. Now we can give an improved version of Proposition \ref{prop:spinc}.

\begin{proposition}\label{prop:spinc2}
If $q$ is odd, then for any characteristic $c \in H^2(X ; \mathbb{Z})$, there is a $\sigma$-invariant spin$^c$-structure $\mathfrak{s}$ on $W$ such that $c_1(\mathfrak{s}) = \pi^*(c)$ in $H^2(W ; \mathbb{Q})$ and such that $\mathfrak{s}$ restricts to the distinguished spin$^c$-structure on each component of $\partial W$.

If $q=2$, then for any characteristic $c \in H^2(X ; \mathbb{Z})$, there is a $\sigma$-invariant spin$^c$-structure $\mathfrak{s}$ on $W$ such that $c_1(\mathfrak{s}) = \pi^*(c + [\Sigma]/2)$ in $H^2(W ; \mathbb{Q})$ and such that $\mathfrak{s}$ restricts to the distinguished spin$^c$-structure on each component of $\partial W$.
\end{proposition}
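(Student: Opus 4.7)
The plan is to construct $\mathfrak{s}$ by first applying Proposition \ref{prop:spinc} and then twisting by a $\sigma$-equivariant line bundle pulled back from $X$ to hit the prescribed Chern class. Fix a spin$^c$-structure $\mathfrak{s}_X$ on $X$ with $c_1(\mathfrak{s}_X) = c$ and let $\mathfrak{s}_0$ be the $\sigma$-invariant spin$^c$-structure on $W$ given by Proposition \ref{prop:spinc}. Since $\mathfrak{s}_0$ agrees with $\pi^* \mathfrak{s}_X$ on $W \setminus \widetilde{\Sigma}$, the difference $c_1(\mathfrak{s}_0) - \pi^* c$ lies in the image of $H^2(W, W \setminus \widetilde{\Sigma}; \mathbb{Z}) \cong \mathbb{Z}$ under the Thom isomorphism, so it is an integer multiple of $PD[\widetilde{\Sigma}]$.

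I would pin down this coefficient by a local Riemann--Hurwitz computation. Near $\widetilde{\Sigma}$ the cover is modelled by $(x, w) \mapsto (x, w^q)$, which gives $\pi^* N_\Sigma \cong N_{\widetilde{\Sigma}}^{\otimes q}$. In the construction of Proposition \ref{prop:spinc}, $\mathfrak{s}_0$ restricted to a neighbourhood of $\widetilde{\Sigma}$ is the canonical spin$^c$-structure of $T\widetilde{\Sigma} \oplus N_{\widetilde{\Sigma}}$, with first Chern class $c_1(T\widetilde{\Sigma}) + c_1(N_{\widetilde{\Sigma}})$, whereas pulling back the canonical spin$^c$-structure of $T\Sigma \oplus N_\Sigma$ would give $c_1(T\widetilde{\Sigma}) + q\,c_1(N_{\widetilde{\Sigma}})$. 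The discrepancy $-(q-1)\,c_1(N_{\widetilde{\Sigma}})$ extends to the Thom-level class $-(q-1)PD[\widetilde{\Sigma}]$, yielding
\[
c_1(\mathfrak{s}_0) = \pi^* c - (q-1)\, PD[\widetilde{\Sigma}].
\]
I would cross-check this in a closed holomorphic example such as a cyclic cover of $\mathbb{CP}^2$ over a smooth curve of degree divisible by $q$, where the identity $K_W = \pi^* K_X + (q-1)\widetilde{\Sigma}$ applies directly; locality of the construction then transports the coefficient to the general smooth setting. Pinning down this coefficient is the main obstacle in the argument.

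The twist is then straightforward. Since $[\Sigma]$ is divisible by $q$ in $H_2(X, \partial X;\mathbb{Z})$ and $H^2(X;\mathbb{Z})$ is torsion-free, let $\beta \in H^2(X;\mathbb{Z})$ be the unique class with $PD[\Sigma] = q\beta$; a Thom-class computation in the local model gives $\pi^* PD[\Sigma] = q\, PD[\widetilde{\Sigma}]$, hence $PD[\widetilde{\Sigma}] = \pi^* \beta$. Set
\[
\gamma = \begin{cases} \tfrac{q-1}{2}\, \beta, & q \text{ odd},\\ \beta, & q = 2, \end{cases}
\]
which is integral since $(q-1)/2 \in \mathbb{Z}$ when $q$ is odd. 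By Lemma \ref{lem:cohom} and the surjectivity of $H^2_G(W;\mathbb{Z}) \to H^2(W;\mathbb{Z})^G$ established in its proof, there is a $\sigma$-equivariant line bundle $L$ on $W$ with $c_1(L) = \pi^* \gamma$. Then $\mathfrak{s} := \mathfrak{s}_0 \otimes L$ is $\sigma$-invariant, and
\[
c_1(\mathfrak{s}) = c_1(\mathfrak{s}_0) + 2 c_1(L) = \pi^*\bigl(c - (q-1)\beta + 2\gamma\bigr)
\]
simplifies to $\pi^* c$ for $q$ odd and to $\pi^*(c + \beta) = \pi^*(c + PD[\Sigma]/2)$ for $q = 2$, as required. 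The boundary condition is preserved because $c_1(L)|_{\partial W} = (\pi|_{\partial W})^*(\gamma|_{\partial X}) = 0$ (as $H^2(S^3;\mathbb{Z}) = 0$), so $L|_{\partial W}$ is trivial and $\mathfrak{s}|_{\partial W} = \mathfrak{s}_0|_{\partial W}$ remains the distinguished spin$^c$-structure on each component.
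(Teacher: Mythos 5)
Your argument is correct, but it takes a genuinely different route from the paper's. You pin down the precise integral relation
\[
c_1(\mathfrak{s}_0) = \pi^* c_1(\mathfrak{s}_X) - (q-1)\,PD[\widetilde{\Sigma}]
\]
via a local Riemann--Hurwitz computation and then twist by an explicit $\sigma$-invariant line bundle. The paper instead argues ``softly'': it starts from an arbitrary $\mathfrak{s}_X$, notes that $c_1(\mathfrak{s})$ is $\sigma$-invariant and hence equals $\pi^*(y)$ for some $y$ by Lemma \ref{lem:cohom}, and then shows \emph{abstractly} that $y$ (for $q$ odd) or $y - [\Sigma]/2$ (for $q=2$, via Nagami's formula $w_2(W) = \pi^*(w_2(X) + [\Sigma]/2)$) is a characteristic, so that any other characteristic is reached by twisting. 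Your approach buys a stronger statement (the exact coefficient, of which Nagami's $w_2$-formula is the mod-$2$ reduction), at the cost of needing a more delicate justification: the ``discrepancy'' you compute by restricting to $\widetilde{\Sigma}$ lives a priori in $H^2(\widetilde{\Sigma};\mathbb{Z}) = 0$ (since $\widetilde{\Sigma}$ has boundary), so the coefficient $-(q-1)$ really has to be read off in $H^2(\widetilde{N}, \widetilde{N}\setminus\widetilde{\Sigma};\mathbb{Z}) \cong \mathbb{Z}$ via the relative Chern class of $\det S_0^+ \otimes (\pi^* \det S_X^+)^{-1}$ equipped with its canonical boundary trivialisation; you acknowledge this (``extends to the Thom-level class'') but it deserves to be made explicit, since a purely $H^2(\widetilde{\Sigma})$-level comparison is vacuous. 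Two small points: the step from $\pi^* PD[\Sigma] = q\, PD[\widetilde{\Sigma}]$ to $PD[\widetilde{\Sigma}] = \pi^*\beta$ requires dividing by $q$, which is only automatic rationally (or when $H^2(W;\mathbb{Z})$ has no $q$-torsion) --- harmless here since the statement is in $H^2(W;\mathbb{Q})$, but worth flagging; and the detour through $H^2_G(W;\mathbb{Z})$ to produce the equivariant $L$ is unnecessary --- as in the paper, one can simply take $L = \pi^* L'$ for a line bundle $L'$ on $X$ with $c_1(L') = \gamma$, which is automatically equivariant and trivial over $\partial W$.
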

\begin{proof}
Let $q$ be odd. Let $\mathfrak{s}_X$ be any spin$^c$-structure on $X$. Then from Proposition \ref{prop:spinc}, there exists a $\sigma$-invariant spin$^c$-structure $\mathfrak{s}$ on $W$ which restricts to $\mathfrak{s}_0$ on each component of $\partial W$. Then $c_1(\mathfrak{s})$ is $\sigma$-invariant, so by Lemma \ref{lem:cohom} we have $c_1(\mathfrak{s}) = \pi^*(c)$ for some $c \in H^2(X ; \mathbb{Z})$. We claim that $c$ is a characteristic for $H^2(X ; \mathbb{Z})$. To see this, let $a \in H^2(X ; \mathbb{Z})$. Then using the fact that $c_1(\mathfrak{s})$ is a characteristic, we get
\begin{align*}
q \langle c , a \rangle &= \langle \pi^*(c) , \pi^*(a) \rangle \\
&= \langle c_1(\mathfrak{s}) , \pi^*(a) \rangle \\
&= \langle \pi^*(a) , \pi^*(a) \rangle \; ({\rm mod} \; 2) \\
&= q \langle a,a \rangle \; ({\rm mod} \; 2).
\end{align*}
Since $q$ is odd, we deduce that $\langle c , a \rangle = \langle a,a \rangle \; ({\rm mod} \; 2)$, so $c$ is a characteristic. Now given any other characteristic $d \in H^2(X ; \mathbb{Z})$, we have that $d = c + 2u$ for some $u \in H^2(X ; \mathbb{Z})$. Let $L \to X$ be a line bundle with $c_1(L) = u$. Then $\mathfrak{s}_L = L \otimes \mathfrak{s}$ is a $\sigma$-invariant spin$^c$-structure on $W$ with $c_1(\mathfrak{s}_L) = c_1(\mathfrak{s}) + 2 \pi^*( c_1(L) ) = \pi^*( c + 2u) = \pi^*(d)$. Also, $L$ restricted to the boundary of $X$ is trivial (since $H^2(S^3 ; \mathbb{Z}) = 0$), so $\mathfrak{s}_L$ is isomorphic to $\mathfrak{s}$ on the boundary of $W$ and hence $\mathfrak{s}_L$ restricts to the distinguished spin$^c$-structure on each component of $\partial W$.

Now consider the case $q=2$. As in the odd case, let $\mathfrak{s}_X$ be any spin$^c$-structure on $X$. Then from Proposition \ref{prop:spinc}, there exists a $\sigma$-invariant spin$^c$-structure $\mathfrak{s}$ on $W$ which restricts to $\mathfrak{s}_0$ on each component of $\partial W$. Then $c_1(\mathfrak{s})$ is $\sigma$-invariant, so by Lemma \ref{lem:cohom} we have that $c_1(\mathfrak{s}) = \pi^*(y)$ for some $y \in H^2(X ; \mathbb{Z})$. Set $c = y - [\Sigma]/2$ so that $c_1(\mathfrak{s}) = \pi^*(c + [\Sigma]/2 )$. 

We claim that $c$ is a characteristic for $H^2(X ; \mathbb{Z})$. From \cite[Lemma 3.4]{nag} we have $w_2(W) = \pi^*( w_2(X) ) + [\widetilde{\Sigma}] = \pi^*( w_2(X) + [\Sigma]/2 )$. Let $c' \in H^2( X ; \mathbb{Z})$ be a characteristic. Then $c' = w_2(X) \; ({\rm mod} \; 2)$ and hence $\pi^*( c' + [\Sigma]/2) = w_2(W) \; ({\rm mod} \; 2)$. So $\pi^*(c' + [\Sigma]/2 )$ is a characteristic for $H^2(W ; \mathbb{Z})$. But $\pi^*(c + [\Sigma]/2)$ is also a characteristic, so it follows that $\pi^*(c) = \pi^*(c') \; ({\rm mod} \; 2)$. So $\pi^*(c) = \pi^*(c') + 2w$ for some $w \in H^2(W ; \mathbb{Z})$. Applying $\pi_*$ we get $2c = 2c' + 2\pi_*(w)$. But $H^2(X ; \mathbb{Z})$ is torsion free, so $c = c' + \pi_*(w)$. Applying $\pi^*$ and using $\pi^*(\pi_*(w)) = w + \sigma^*(w)$, we see that $\pi^*(c) = \pi^*(c') + w + \sigma^*(w)$. Comparing this with $\pi^*(c) = \pi^*(c') + 2w$, we see that $\sigma^*(w) = w$. So by Lemma \ref{lem:cohom}, we have that $w = \pi^*(v)$ for some $v \in H^2(X ; \mathbb{Z}$. Hence $\pi^*(c) = \pi^*(c' + 2v)$. By Lemma \ref{lem:cohom}, $\pi^*$ is injective and hence $c = c' + 2v$. This proves the claim that $c$ is a characteristic for $H^2(X ; \mathbb{Z})$, because $c'$ is a characteristic for $H^2(X ; \mathbb{Z})$.

The rest of the argument works the same as the odd case. Given any other characteristic $d \in H^2(X ; \mathbb{Z})$, we have that $d = c + 2u$ for some $u \in H^2(X ; \mathbb{Z})$. Let $L \to X$ be a line bundle with $c_1(L) = u$. Then $\mathfrak{s}_L = L \otimes \mathfrak{s}$ is a $\sigma$-invariant spin$^c$-structure on $W$ with $c_1(\mathfrak{s}_L) = c_1(\mathfrak{s}) + 2 \pi^*( c_1(L) ) = \pi^*( c + 2u + [\Sigma]/2) = \pi^*(d + [\Sigma]/2)$. Also, $L$ restricted to the boundary of $X$ is trivial, so $\mathfrak{s}_L$ restricts to the distinguished spin$^c$-structure on each component of $\partial W$.
\end{proof}

\section{Equivariant Seiberg--Witten--Floer cohomology and equivariant $d$-invariants}\label{sec:esw}

Let $q$ be a prime and let $G$ be the cyclic group $G  = \mathbb{Z}_q$. Let $\sigma$ denote a generator of $G$. Let $Y$ be a rational homology $3$-sphere and $\mathfrak{s}$ a spin$^c$-structure on $Y$. Suppose that $\mathbb{Z}_q$ acts on $Y$ by orientation preserving diffeomorphisms and that $\mathfrak{s}$ is invariant under this action. In \cite{bh}, the author and Hekmati constructed the equivariant Seiberg--Witten--Floer cohomology groups $HSW^*_G(Y , \mathfrak{s})$ (the construction works more generally for any finite group action, but we will only need the case that $G$ is cyclic of prime order). Up to a degree shift, $HSW^*_G(Y , \mathfrak{s})$ is equal to the $S^1 \times G$-equivariant cohomology of the Conley index $I(Y , \mathfrak{s})$ of a finite dimensional approximation of the Chern--Simons--Dirac gradient flow. We take cohomology with respect to the coefficient group $\mathbb{F} = \mathbb{Z}_q$. Thus $HSW^*_G(Y , \mathfrak{s})$ is a module over the ring $R = H^*_{S^1 \times G}( pt ; \mathbb{F})$. If $q=2$, then $R \cong \mathbb{F}[U,Q]$, where $deg(U) = 2$, $deg(Q) = 1$. If $q$ is odd, then $R \cong \mathbb{F}[U,R,S]/(R^2)$, where $deg(U) = 2$, $deg(R)=1$, $deg(S)=2$.

The localisation theorem in equivariant cohomology implies that the localisation $U^{-1} HSW^*_G(Y , \mathfrak{s})$ is a free $U^{-1}R$-module of rank $1$. Thus we have an isomorphism of the form
\[
\iota : U^{-1}HSW^*_G(Y , \mathfrak{s}) \to U^{-1}R \tau
\]
for some element $\tau$. We then define a sequence of invariants, which are to be thought of as an equivariant analogue of the Ozsv\'ath--Szab\'o $d$-invariant $d(Y , \mathfrak{s})$. These are defined as follows. If $q=2$, we set $d_{G , Q^j}(Y , \mathfrak{s}) = i-j$, where $i$ is the least degree for which there exists an element $x \in HSW^i_G(Y , \mathfrak{s})$ and a $k \in \mathbb{Z}$ such that
\[
\iota x = Q^j U^k \tau \; ({\rm mod} \; Q^{j+1})
\]
(cf \cite[\textsection 3.6]{bh}). If $q$ is odd, we set $d_{G , S^j}(Y , \mathfrak{s}) = i - 2j$, where $i$ is the least degree for which there exists an element $x \in HSW^i_G(Y , \mathfrak{s})$ and a $k \in \mathbb{Z}$ such that
\[
\iota x = S^j U^k \tau \; ({\rm mod} \; S^{j+1})
\]
For convenience, we also define equivariant $\delta$-invariants which are related to the $d$-invariants by
\[
\delta_{G , Q^j}(Y , \mathfrak{s}) = \frac{1}{2} d_{G , Q^j}(Y , \mathfrak{s})
\]
if $q=2$ and 
\[
\delta_{G , S^j}(Y , \mathfrak{s}) = \frac{1}{2} d_{G , S^j}(Y , \mathfrak{s})
\]
if $q$ is odd.

Most of the properties of the $d$-invariants that we need are proven in \cite{bh}, however we also need a result concerning equivariant connected sums.

Let $Y_1,Y_2$ be rational homology $3$-spheres. Suppose that $G = \mathbb{Z}_q$ acts smoothly and orientation preservingly on $Y_1,Y_2$ and that the action has non-empty fixed point sets. Since $G$ acts orientation preservingly, the fixed points sets $F_1,F_2$ must be $1$-dimensional. For $i=1,2$, let $N_i$ denote the normal bundle of $F_i$ in $Y_i$. Let $y_1 \in Y_1$, $y_2 \in Y_2$ be fixed points. Assume that the action of $G$ on the normal spaces $(N_1)_{y_1}$, $(N_2)_{y_2}$ are isomorphic as representations of $G$ by an orientation preserving isomorphism $\varphi$. Then a neighbourhood of $y_i$ in $Y_i$ takes the form $(-1,1) \times (N_i)_{y_i}$ where $G$ acts trivially on the first factor. Then $(t,x) \mapsto (-t,\varphi(x))$ defines a $G$-equivariant orientation reversing diffeomorphism from $(-1,1) \times (N_1)_{y_1}$ to $(-1,1) \times (N_2)_{y_2}$. Thus by removing $G$-invariant neighbourhoods of $y_1,y_2$ and identifying their boundaries via this map, we can form the $G$-equivariant connected sum $Y = Y_1 \# Y_2$.

Let $\mathfrak{s}_1, \mathfrak{s}_2$ be $G$-invariant spin$^c$-structures on $Y_1,Y_2$ and set $\mathfrak{s} = \mathfrak{s}_1 \# \mathfrak{s}_2$.
\begin{proposition}\label{prop:dsum}
For all $i,j \ge 0$, we have
\[
d_{G , Q^{i+j}}(Y , \mathfrak{s}) \le d_{G , Q^i}(Y_1 , \mathfrak{s}_1) + d_{G , Q^j}(Y_2 , \mathfrak{s}_2)
\]
if $q=2$, and
\[
d_{G , S^{i+j}}(Y , \mathfrak{s}) \le d_{G , S^i}(Y_1 , \mathfrak{s}_1) + d_{G , S^j}(Y_2 , \mathfrak{s}_2).
\]
if $q$ is odd.
\end{proposition}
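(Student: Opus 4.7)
The plan is to exhibit a natural cup-product pairing on equivariant Seiberg--Witten--Floer cohomology coming from an equivariant smash decomposition of the Conley index of a connected sum, and then to multiply optimal witnesses for the two $d$-invariants on the right-hand side. The first step is to verify that the $G$-equivariant connected sum $Y = Y_1 \# Y_2$ in the statement is compatible with the Chern--Simons--Dirac gradient flow, in the sense that a sufficiently good finite-dimensional approximation on $Y$ splits (up to the linear correction absorbed by $\varphi$) as the product of the approximations on $Y_1$ and $Y_2$. Adapting Manolescu's non-equivariant argument to the $S^1 \times G$-equivariant framework of \cite{bh} should then give an equivariant stable equivalence
\[
I(Y_1 \# Y_2, \mathfrak{s}_1 \# \mathfrak{s}_2) \simeq I(Y_1, \mathfrak{s}_1) \wedge I(Y_2, \mathfrak{s}_2),
\]
and composing with the reduced cup product in $S^1 \times G$-Borel cohomology produces an $R$-bilinear pairing
\[
\mu \colon HSW^a_G(Y_1, \mathfrak{s}_1) \otimes HSW^b_G(Y_2, \mathfrak{s}_2) \to HSW^{a+b}_G(Y, \mathfrak{s})
\]
which is natural with respect to the localisation map $\iota$.

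Next, I would pin down the behaviour of $\mu$ on the localisation generators. Each of $U^{-1} HSW^*_G(Y_\ell, \mathfrak{s}_\ell)$ and $U^{-1} HSW^*_G(Y, \mathfrak{s})$ is free of rank one over $U^{-1}R$, so the localised pairing is a bilinear map between rank-one free modules and must send $\tau_1 \otimes \tau_2$ to a unit multiple of $\tau$, say $c U^m \tau$ with $c \in \mathbb{F}^\times$. Such units do not interact with the $Q$-adic (respectively $S$-adic) filtration and so cannot disturb the leading-order analysis that defines the equivariant $d$-invariants.

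For the concluding computation, choose witnesses $x_1 \in HSW^a_G(Y_1, \mathfrak{s}_1)$ and $x_2 \in HSW^b_G(Y_2, \mathfrak{s}_2)$ realising $d_{G, Q^i}(Y_1, \mathfrak{s}_1) = a - i$ and $d_{G, Q^j}(Y_2, \mathfrak{s}_2) = b - j$, so that
\[
\iota_1(x_1) \equiv Q^i U^{k_1} \tau_1 \pmod{Q^{i+1}}, \qquad \iota_2(x_2) \equiv Q^j U^{k_2} \tau_2 \pmod{Q^{j+1}}.
\]
Multiplying and using the previous step yields
\[
\iota\bigl(\mu(x_1 \otimes x_2)\bigr) \equiv c\, Q^{i+j} U^{k_1+k_2+m} \tau \pmod{Q^{i+j+1}},
\]
so (after clearing the scalar $c \in \mathbb{F}^\times$) $\mu(x_1 \otimes x_2) \in HSW^{a+b}_G(Y, \mathfrak{s})$ is an admissible witness for $d_{G, Q^{i+j}}(Y, \mathfrak{s})$ in degree $a + b$, giving
\[
d_{G, Q^{i+j}}(Y, \mathfrak{s}) \le (a+b) - (i+j) = d_{G, Q^i}(Y_1, \mathfrak{s}_1) + d_{G, Q^j}(Y_2, \mathfrak{s}_2).
\]
The odd-$q$ case is identical after replacing $Q$ by $S$ and using the degree convention $d_{G, S^j} = i - 2j$.

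The hardest part, I expect, is the geometric input in the first step: verifying in detail that the $G$-equivariant connected-sum construction in the hypotheses genuinely produces a smash decomposition of Conley indices inside the framework of \cite{bh}, and bookkeeping the suspension/desuspension shifts so that the resulting cup product lands in the claimed degree. Once this is in place, the remaining steps are essentially formal consequences of the localisation theorem together with the simple description of units in $U^{-1}R$.
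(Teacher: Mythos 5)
Your outline follows the same strategy as the paper: obtain a smash decomposition $I(Y,\mathfrak{s}) \simeq I(Y_1,\mathfrak{s}_1)\wedge I(Y_2,\mathfrak{s}_2)$ for the equivariant connected sum, pass to a cup-product pairing $\mu$ on $S^1\times G$-equivariant cohomology, determine the image of $\tau_1\otimes\tau_2$ under the localised pairing, and then multiply witnesses for the two $d$-invariants. The final multiplication step is correct as you wrote it.

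However, there is a genuine gap in the middle step. You claim that because all three localised modules are free of rank one over $U^{-1}R$, the localised pairing ``must send $\tau_1\otimes\tau_2$ to a unit multiple of $\tau$.'' That does not follow: a $U^{-1}R$-linear map from a rank-one free module to a rank-one free module is multiplication by \emph{some} element of $U^{-1}R$, which could perfectly well be zero, or $Q$ (resp.\ $S$), or any other non-unit; rank-one freeness says nothing about surjectivity or even nontriviality. Some additional geometric input is needed to rule this out. The paper supplies it by noting that the $S^1$-fixed-point Conley indices also satisfy a smash decomposition, $I(Y,\mathfrak{s})^{S^1}\cong I(Y_1,\mathfrak{s}_1)^{S^1}\wedge I(Y_2,\mathfrak{s}_2)^{S^1}$. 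Via the localisation theorem, the localisation of $\mu$ is identified with the product map on the cohomology of these fixed-point spectra, and the smash decomposition of the fixed points forces that localised map to be an isomorphism. Only then can one conclude that $\iota(\mu(\iota_1^{-1}\tau_1\otimes\iota_2^{-1}\tau_2))$ has nonzero leading coefficient, i.e.\ equals $cU^k\tau\pmod{Q}$ with $c\in\mathbb{F}\setminus\{0\}$. Once you insert this fixed-point argument, the rest of your proof goes through and coincides with the paper's.
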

\begin{proof}
We give the proof in the $q=2$ case. The case where $q$ is odd is similar. In the $q=2$ case we have $R \cong \mathbb{F}[U,Q]$. For an equivariant connected sum, one has that the Conley indices are related by $I(Y,\mathfrak{s}) \cong I(Y_1 , \mathfrak{s}_1) \wedge I(Y_2 , \mathfrak{s}_2)$ and this defines a product map on cohomology groups 
\begin{equation}\label{equ:product}
\mu : HSW^i_{G}(Y_1 , \mathfrak{s}_1) \otimes_{\mathbb{F}[U,Q]} HSW^j_{G}(Y_2 , \mathfrak{s}_2) \to HSW^{i+j}_{\mathbb{Z}_2}(Y , \mathfrak{s}).
\end{equation}
Recall that we have localisation isomorphisms
\[
\iota_1 : U^{-1}HSW^*_G(Y_1 , \mathfrak{s}_1) \to \mathbb{F}[U,U^{-1} , Q]\tau_1
\]
and
\[
\iota_2 : U^{-1}HSW^*_G(Y_2 , \mathfrak{s}_2) \to \mathbb{F}[U,U^{-1} , Q]\tau_2
\]
for some $\tau_1,\tau_2$. One can show that $I(Y , \mathfrak{s})^{S^1} \cong I(Y_1 , \mathfrak{s}_1)^{S^1} \wedge I(Y_2 , \mathfrak{s}_2)^{S^1}$ and from this it follows that the localisation of the product map (\ref{equ:product}) is an isomorphism. So letting $\iota : U^{-1}HSW^*_G(Y , \mathfrak{s}) \to \mathbb{F}[U,U^{-1} , Q]\tau$ denote the localisation map corresponding to $(Y , \mathfrak{s})$, it follows that we must have
\[
\iota( \mu( \iota_1^{-1}(\tau_1) \otimes \iota_2^{-1}(\tau_2) ) ) = c U^k \tau \; ({\rm mod} \; Q )
\]
for some $c \in \mathbb{F} \setminus \{0\}$ and some $k \in \mathbb{Z}$.

Now set $a_1 = d_{G , Q^i}(Y_1 , \mathfrak{s}_1) + i$, $a_2 = d_{G , Q^j}(Y_2 , \mathfrak{s}_2) + j$. Then by definition of the $d$-invariants there exists $x_1 \in HSW^{a_1}_G(Y_1 , \mathfrak{s}_1)$ such that $\iota_1 x_1 = Q^i  U^{k_1} \tau_1 \; ({\rm mod} \; Q^{i+1} )$ and $\iota_2 x_2 = Q^j U^{k_2} \tau_2 \; ({\rm mod} \; Q^{j+1})$ for some $k_1,k_2$. It follows that
\[
\iota( \mu( c^{-1}x_1 , x_2 ) ) = Q^{i+j} U^{k_1+k_2+k} \tau \; ({\rm mod} \; Q^{i+j+1} ).
\]
But $\mu( c^{-1}x_1 , x_2 ) \in HSW^{a_1+a_2}_G(Y , \mathfrak{s})$, so from the definition of $d_{G , Q^{i+j}}(Y , \mathfrak{s})$, we get
\[
d_{G , Q^{i+j}}(Y , \mathfrak{s}) \le a_1 + a_2 - i - j = d_{G , Q^i}(Y_1 , \mathfrak{s}_1) + d_{G , Q^j}(Y_2 , \mathfrak{s}_2).
\]
\end{proof}

\section{Concordance invariants from equivariant Seberg--Witten--Floer cohomology}\label{sec:conc}

Let $K \subset S^3$ be a knot. Let $\sigma(K)$ and $g_4(K)$ denote the signature and smooth $4$-genus of $K$. In \cite{bh}, the author and Hekmati constructed a sequence of integer-valued knot concordance invariants $\{ \delta_j(K) \}_{j \ge 0}$ with the following properties (see \cite[\textsection 6]{bh}):

\begin{itemize}
\item[(1)]{$\delta_0(K) \ge \delta(K)$, where $\delta(K)$ is the Manolescu--Owens invariant \cite{mo}.}
\item[(2)]{$\delta_{j+1}(K) \le \delta_j(K)$ for all $j \ge 0$.}
\item[(3)]{$\delta_j(K) \ge -\sigma(K)/2$ for all $j \ge 0$ and $\delta_j(K) = -\sigma(K)/2$ for $j \ge g_4(K)-\sigma(K)/2$.}
\item[(4)]{$\delta_j(K) + \delta_j(-K) \ge 0$ for all $j \ge 0$.}
\item[(5)]{$\delta_j(K) = -\sigma(K)/2 \; ({\rm mod} \; 4)$ for all $j \ge 0$.}
\item[(6)]{If $K$ is quasi-alternating, then $\delta_j(K) = -\sigma(K)/2$ for all $j \ge 0$.}
\end{itemize}

The invariant $\delta_j(K)$ is defined by
\[
\delta_j(K) = 2 d_{ \mathbb{Z}_2 , Q^j }( \Sigma_2(K) , \mathfrak{s}_0) = 4 \delta_{\mathbb{Z}_2 , Q^j}( \Sigma_2(K) , \mathfrak{s}_0).
\]
For convenience we define a new set of concordance invariants $\xi_j(K)$ by setting
\[
\xi_j(K) = \frac{1}{4} \delta_j(K) + \frac{1}{8}\sigma(K).
\]
Then the above properties of $\delta_j(K)$ imply that $\xi_j(K)$ is integer-valued, decreasing and is zero for $j \ge g_4(K) -\sigma(K)/2$.

\begin{proposition}\label{prop:xi}
Let $K_+,K_-$ be knots where $K_-$ is obtained from $K_+$ by changing a positive crossing into a negative crossing. Then $\sigma(K_-) - \sigma(K_+) = 0$ or $2$, and:
\begin{itemize}
\item[(1)]{If $\sigma(K_-) = \sigma(K_+)$, then $\xi_{j+1}(K_-) \le \xi_j(K_+) \le \xi_j(K_-)$ for all $j \ge 0$.}
\item[(2)]{If $\sigma(K_-) = \sigma(K_+)+2$, then $\xi_{j+1}(K_+) \le \xi_j(K_-) \le \xi_j(K_+)$ for all $j \ge 0$.}
\end{itemize}
\end{proposition}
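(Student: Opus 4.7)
My plan is to realise the crossing change by a standard genus-one cobordism surface $\Sigma$ in $X := S^3 \times [0,1]$, take its double branched cover $\pi : W \to X$ to obtain a $\mathbb{Z}_2$-equivariant cobordism from $\Sigma_2(K_+)$ to $\Sigma_2(K_-)$, and then apply the cobordism inequality for $d_{\mathbb{Z}_2, Q^j}$ from \cite{bh} combined with the $Q$-monotonicity $d_{\mathbb{Z}_2, Q^{j+1}}(Y,\mathfrak{s}) \le d_{\mathbb{Z}_2, Q^j}(Y,\mathfrak{s})$ (which is immediate from the definition, since multiplication by $Q$ raises both degree and $Q$-filtration by one).

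By a standard construction -- tubing the single transverse double point of the annulus traced by the crossing-change homotopy, equivalently concatenating two oriented saddle moves at the changed crossing -- there is a smooth, oriented, properly embedded genus-one surface $\Sigma \subset X$ with $\partial \Sigma = K_+ \sqcup -K_-$. Since $H_2(X, \partial X; \mathbb{Z}) = 0$, $[\Sigma] = 0$, and Lemma \ref{lem:meridian} yields the $\mathbb{Z}_2$-equivariant double cover $\pi : W \to X$ branched over $\Sigma$. Lemma \ref{lem:branch} with $q = 2$, $g = 1$, $[\Sigma]^2 = 0$, and $b_2(X) = \sigma(X) = 0$ gives $b_1(W) = 0$, $b_2(W) = 2$, $\sigma(W) = \sigma(K_-) - \sigma(K_+)$, and
\[
b_+(W) = 1 + \tfrac{1}{2}(\sigma(K_-) - \sigma(K_+)), \qquad b_-(W) = 1 - \tfrac{1}{2}(\sigma(K_-) - \sigma(K_+)).
\]
In case (1), $(b_+,b_-) = (1,1)$ and $\sigma(W) = 0$; in case (2), $(b_+,b_-) = (2,0)$ and $\sigma(W) = 2$. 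Proposition \ref{prop:spinc2} applied with the unique characteristic $c = 0 \in H^2(X;\mathbb{Z}) = 0$ supplies a $\sigma$-invariant spin$^c$-structure $\mathfrak{s}$ on $W$ restricting to $\mathfrak{s}_0$ on each boundary, with $c_1(\mathfrak{s}) = \pi^*([\Sigma]/2) = 0$ rationally. By Lemma \ref{lem:cohom}, $H^2(W;\mathbb{Q})^\sigma = \pi^* H^2(X;\mathbb{Q}) = 0$, so $H^2(W;\mathbb{Q})$ is entirely $\sigma$-anti-invariant and $b_\pm^G(W) = 0$. The same conclusions hold for the orientation-reversed cobordism $\overline W : \Sigma_2(K_-) \to \Sigma_2(K_+)$, which has $b_+(\overline W) = b_-(W)$ and $\sigma(\overline W) = -\sigma(W)$.

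The core step is to apply the cobordism inequality for the equivariant $d$-invariants from \cite{bh}. In our setting, for a $\mathbb{Z}_2$-equivariant cobordism $V : Y_0 \to Y_1$ with $b_1(V) = b_+^G(V) = 0$ and $\sigma$-invariant spin$^c$-structure $\mathfrak{s}_V$ with $c_1(\mathfrak{s}_V)$ rationally zero and restricting to $\mathfrak{s}_0$ on both ends, this inequality reads
\[
d_{\mathbb{Z}_2, Q^j}(Y_0, \mathfrak{s}_0) \le d_{\mathbb{Z}_2, Q^{j + b_+(V)}}(Y_1, \mathfrak{s}_0) + \tfrac{1}{4}\sigma(V).
\]
Using $\xi_j(K) = \tfrac{1}{2} d_{\mathbb{Z}_2, Q^j}(\Sigma_2(K), \mathfrak{s}_0) + \tfrac{1}{8}\sigma(K)$, I derive each of the four claims by chaining this inequality (applied to $W$ or $\overline W$) with $Q$-monotonicity. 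In case (1), applied to $W$ it gives $d_{\mathbb{Z}_2, Q^j}(\Sigma_2(K_+), \mathfrak{s}_0) \le d_{\mathbb{Z}_2, Q^{j+1}}(\Sigma_2(K_-), \mathfrak{s}_0)$, which together with monotonicity yields $\xi_j(K_+) \le \xi_j(K_-)$; applied to $\overline W$ it gives $d_{\mathbb{Z}_2, Q^j}(\Sigma_2(K_-), \mathfrak{s}_0) \le d_{\mathbb{Z}_2, Q^{j+1}}(\Sigma_2(K_+), \mathfrak{s}_0)$, and shifting $j \mapsto j+1$ and invoking monotonicity gives $\xi_{j+1}(K_-) \le \xi_j(K_+)$. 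In case (2), applied to $\overline W$ (for which $b_+(\overline W) = 0$) it reads $d_{\mathbb{Z}_2, Q^j}(\Sigma_2(K_-), \mathfrak{s}_0) + \tfrac{1}{2} \le d_{\mathbb{Z}_2, Q^j}(\Sigma_2(K_+), \mathfrak{s}_0)$, and the $\tfrac{1}{2}$ on the left is exactly absorbed by $\sigma(K_-)/8 - \sigma(K_+)/8 = \tfrac{1}{4}$ to produce $\xi_j(K_-) \le \xi_j(K_+)$; applied to $W$ at index $j+1$ it reads $d_{\mathbb{Z}_2, Q^{j+1}}(\Sigma_2(K_+), \mathfrak{s}_0) \le d_{\mathbb{Z}_2, Q^{j+3}}(\Sigma_2(K_-), \mathfrak{s}_0) + \tfrac{1}{2}$, and monotonicity collapses this to $\xi_{j+1}(K_+) \le \xi_j(K_-)$.

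The main obstacle is invoking the cobordism inequality from \cite{bh} in precisely this form -- with $Q$-filtration shift $b_+(V) - b_+^G(V)$, which equals $b_+(V)$ in our situation, and degree shift $(\sigma(V) - c_1(\mathfrak{s}_V)^2)/4 = \sigma(V)/4$. Once this is granted, the only bookkeeping subtleties are the cancellation between the $\tfrac{1}{4}\sigma(W)$ appearing in the cobordism inequality and the $\tfrac{1}{8}\sigma(K)$ term in $\xi$, and using $Q$-monotonicity to reduce the a priori $b_+(W) = 2$ shift in case (2) down to the single-step $j$-shift in the statement.
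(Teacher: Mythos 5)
Your proposed form of the equivariant Fr\o yshov inequality places the $Q$-filtration shift $b_+(V)$ on the \emph{outgoing} boundary, but in \cite[Theorem 5.3]{bh} the shift is on the \emph{ingoing} boundary. For a $\mathbb{Z}_2$-equivariant cobordism $V : Y_0 \to Y_1$ with $b_1(V) = b_+^{\mathbb{Z}_2}(V) = 0$ the inequality has the form
\[
d_{\mathbb{Z}_2 , Q^{j + b_+(V)}}(Y_0 , \mathfrak{s}_0) \le d_{\mathbb{Z}_2 , Q^{j}}(Y_1 , \mathfrak{s}_0) + \tfrac{1}{4}\bigl( \sigma(V) - c_1(\mathfrak{s}_V)^2 \bigr).
\]
You can check this against the paper's own usage: the genus-one resolution cobordism in case~(1) has $b_+ = 1$, $\sigma = 0$, $c_1 = 0$, and the resulting inequality is recorded as $-\sigma(W)/2 + \delta_{j+1}(K_-) \le \delta_j(K_+)$, with the shifted index on the ingoing end; and in the proof of Theorem~\ref{thm:in1}, where the cobordism has empty outgoing boundary, the inequality $\delta(W,\mathfrak{s}) + \delta_{\mathbb{Z}_q,S^j}(\overline Y,\mathfrak{s}_0) \le 0$ is asserted precisely at $2j = b_+(W)$, not for all $j$. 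Your form, combined with $Q$-monotonicity, would force $d_{\mathbb{Z}_2,Q^j}(Y_0) \le d_{\mathbb{Z}_2,Q^j}(Y_1)$ whenever $\sigma = c_1^2 = 0$; the correct inequality does not imply this.

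This gap is fatal to the strategy of using only the resolution cobordism and its reversal. With the correct Fr\o yshov form, in case~(1) the cobordism $W$ and $\overline W$ (both with $b_+ = 1$) yield only the doubly shifted pair $\xi_{j+1}(K_-) \le \xi_j(K_+)$ and $\xi_{j+1}(K_+) \le \xi_j(K_-)$, \emph{not} the unshifted $\xi_j(K_+) \le \xi_j(K_-)$. In case~(2) the reversed cobordism $\overline{W}$ of the genus-one surface has $b_+ = 2$ and gives only $\xi_{j+2}(K_+) \le \xi_j(K_-)$, not $\xi_{j+1}(K_+) \le \xi_j(K_-)$. The paper obtains the missing inequalities from a \emph{second} cobordism built from the same crossing-change homotopy: instead of resolving the transverse double point (giving the genus-one surface in $[0,1]\times S^3$) it is blown up, producing a genus-zero surface in $\mathbb{CP}^2$ with two balls removed. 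Its branched double cover has $b_2 = 2$ but $\sigma = \sigma(K_+) - \sigma(K_-) + 2$ and a natural invariant spin$^c$-structure with $c_1^2 \ne 0$; its orientation reversal has $b_+ = 0$ in case~(1) (so no filtration shift, yielding $\xi_j(K_+) \le \xi_j(K_-)$) and $b_+ = 1$ in case~(2) (one step, yielding $\xi_{j+1}(K_+) \le \xi_j(K_-)$). Both cobordisms are needed.
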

\begin{proof}
Resolving the singularity of a regular homotopy associated to the crossing change gives a genus $1$ surface $\Sigma$ properly embedded in $X = [0,1] \times S^3$ which meets the ingoing boundary of $X$ in $K_-$ and the outgoing boundary in $K_+$. Let $W \to X$ be the double cover of $X$ branched over $\Sigma$. This gives a $\mathbb{Z}_2$-equivariant cobordism from $\Sigma_2(K_-)$ to $\Sigma_2(K_+)$ where $\mathbb{Z}_2$ acts by covering transformation. By Proposition \ref{prop:spinc2}, we can choose a $\mathbb{Z}_2$-invariant spin$^c$-structure $\mathfrak{s}$ on $W$ which restricts to the distinguished spin$^c$-structure on either boundary and has $c_1(\mathfrak{s}) = 0$. By Lemma \ref{lem:branch}, we find $\sigma(W) = \sigma(K_+) - \sigma(K_-)$ and $b_2(W)=2$. Thus
\[
b_+(W) = 1 + \frac{ \sigma(K_+) - \sigma(K_-) }{2}.
\]
Recall that for a single crossing change one has $\sigma(K_+) - \sigma(K_-) \in \{ 0 ,-2\}$. If $\sigma(K_+) = \sigma(K_-)$, then $b_+(W) = b_-(W) = 1$. In this case the equivariant Froyshov inequality \cite[Theorem 5.3]{bh} applied to $(W , \mathfrak{s})$ gives
\[
-\frac{\sigma(W)}{2} + \delta_{j+1}(K_-) \le \delta_j(K_+).
\]
Since $\sigma(W) = \sigma(K_+)  - \sigma(K_-)$, this can be rewritten as
\[
\xi_{j+1}(K_-) \le \xi_j(K_+).
\]
Next, consider the case $\sigma(K_+) = \sigma(K_-) - 2$. Then $b_+(W) = 0$ and $b_-(W) = 2$. In this case the equivariant Froyshov inequalities for $W$ and $\overline{W}$ give
\[
\xi_j(K_-) \le \xi_j(K_+).
\]

Consider again a regular homotopy associated to the crossing change. This gives an immersed surface in $[0,1] \times S^3$ which meets the ingoing boundary in $K_-$, the outgoing boundary in $K_+$ and has a single transverse point of self-intersection with self-intersection $-1$. Instead of resolving the crossing, we take a blow-up. This gives a genus zero surface $\Sigma$ properly embedded in $X$, where $X$ is $\mathbb{CP}^2$ with two balls removed and $\Sigma$ meets the ingoing boundary of $X$ in $K_-$ and the outgoing boundary in $K_+$. Let $\pi : W \to X$ be the double cover of $X$ branched over $\Sigma$. We again obtain a $\mathbb{Z}_2$-equivariant cobordism from $\Sigma_2(K_-)$ to $\Sigma_2(K_+)$. By Proposition \ref{prop:spinc2}, we can choose a $\mathbb{Z}_2$-invariant spin$^c$-structure $\mathfrak{s}$ on $W$ such that $\mathfrak{s}$ restricts to the distinguished spin$^c$-structure on each boundary and $c_1(\mathfrak{s})^2 = 2$ (take $c$ in Proposition \ref{prop:spinc2} to be a characteristic satisfying $c^2 = 1$). Lemma \ref{lem:branch} gives $\sigma(W) = \sigma(K_+) - \sigma(K_-) + 2$ and $b_2(W) = 2$.

Note that $H^+( \overline{W})^{\mathbb{Z}_2} \cong H^-(W)^{\mathbb{Z}_2} \cong H^-(X) = 0$. Hence reversing orientation on $W$ and swapping ingoing and outgoing boundaries, we may apply the Froyshov inequality. If $\sigma(K_+) = \sigma(K_-)$, then $b_+(W) = 2$, $b_-(W) = 0$ and the equivariant Froyshov inequality applied to $\overline{W}$ gives:
\[
\frac{\sigma(W) - c_1(\mathfrak{s})^2}{2} + \delta_{j}(K_+) \le \delta_j(K_-).
\]
Since $\sigma(W) = \sigma(K_+) - \sigma(K_-) - 2$ and $c_1(\mathfrak{s})^2 = -2$, this simplifies to
\[
\frac{\sigma(K_+)}{2} - \frac{\sigma(K_-)}{2} + \delta_{j}(K_+) \le \delta_j(K_-)
\]
which can be re-written as
\[
\xi_{j}(K_+) \le \xi_j(K_-).
\]
Similarly if $\sigma(K_+) = \sigma(K_-) - 2$, then applying the equivariant Froyshov inequality to $\overline{W}$ gives:
\[
\xi_{j+1}(K_+) \le \xi_j(K_-).
\]
Putting together all these inequalities, we see that if $\sigma(K_-) = \sigma(K_+)$, then $\xi_{j+1}(K_-) \le \xi_j(K_+) \le \xi_j(K_-)$ for all $j \ge 0$ and if $\sigma(K_-) = \sigma(K_+)+2$, then $\xi_{j+1}(K_+) \le \xi_j(K_-) \le \xi_j(K_+)$ for all $j \ge 0$.
\end{proof}

Extend the definition of $\xi_j(K)$ to negative $j$ by setting $\xi_j(K) = \xi_0(K)$ for all $j<0$. Consider the shifted invariants $\rho_j(K) = \xi_{j - \sigma(K)/2}(K)$. Then $\{ \rho_j(K) \}$ is integer-valued, decreasing and $\rho_j(K) = 0$ for $j \ge g_4(K)$ by property (3) of the $\delta_j$-invariants. We define $\theta(K)$ to be the smallest $j\ge 0$ for which $\rho_j( -K)=0$. Equivalently, $\theta(K) = \max\{ 0 , j(-K)-\sigma(K)/2\}$ where $j(-K)$ is the smallest $j$ such that $\xi_j(-K)=0$. Since $\rho_j$ are knot concordance invariants, it follows that $\theta$ is also a concordance invariant.

\begin{theorem}\label{thm:theta}
The invariant $\theta(K)$ satisfies the following properties:
\begin{itemize}
\item[(1)]{$-\sigma(K)/2 \le \theta(K) \le g_4(K)$.}
\item[(2)]{$\theta(K_1 + K_2) \le \theta(K_1) + \theta(K_2)$.}
\item[(3)]{Let $K_+,K_-$ be knots where $K_-$ is obtained from $K_+$ by changing a positive crossing into a negative crossing. Then
\[
0 \le \theta(K_+) - \theta(K_-) \le 1.
\]
}
\item[(4)]{If $K$ is quasi-alternating, then 
\[
\theta(K) = \begin{cases} -\sigma(K)/2, & \sigma(K) \le 0, \\ 0, & \sigma(K) > 0. \end{cases}
\]
}
\item[(5)]{If $\delta(K) < -\sigma(K)/2$ and $\sigma(K) \le 0$, then $\theta(K) \ge 1 -\sigma(K)/2$.
}
\end{itemize}
\end{theorem}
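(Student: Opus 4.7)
The approach throughout is to reduce each property of $\theta$ to the corresponding property of the underlying invariants $\xi_j$ (equivalently $\delta_j$), then unpack the definition $\theta(K)=\max\{0,j(-K)-\sigma(K)/2\}$, where $j(-K)$ is the least $j\ge 0$ with $\xi_j(-K)=0$. The key observation I will use repeatedly is that $\xi_j(K)\ge 0$ for all $j$, which is immediate from property (3) of the $\delta_j$, and hence $\rho_j(K)\ge 0$. This lets me convert upper bounds $\rho_j\le 0$ into equalities $\rho_j=0$.

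For (1), the upper bound comes from property (3) of $\delta_j$ applied to $-K$: one computes $\xi_j(-K)=0$ as soon as $j\ge g_4(-K)-\sigma(-K)/2=g_4(K)+\sigma(K)/2$, so $j(-K)\le g_4(K)+\sigma(K)/2$ and hence $\theta(K)\le g_4(K)$. The lower bound is immediate, since when $\sigma(K)<0$ we have $j(-K)\ge 0$ forces $\theta(K)=j(-K)-\sigma(K)/2\ge -\sigma(K)/2$. For (2), I first note that $\Sigma_2(-(K_1+K_2))=\Sigma_2(-K_1)\#\Sigma_2(-K_2)$ as equivariant connected sums, and that distinguished spin$^c$-structures behave well under connected sum (each is pulled back from the unique spin$^c$-structure on the complement). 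Then Proposition \ref{prop:dsum} gives $\delta_{i+j}(-(K_1+K_2))\le\delta_i(-K_1)+\delta_j(-K_2)$; the signature contributions $\tfrac18\sigma$ telescope, so $\xi_{i+j}(-(K_1+K_2))\le \xi_i(-K_1)+\xi_j(-K_2)$, and after the shift by $\sigma/2$ the same inequality holds for $\rho$. Taking $i=\theta(K_1)$, $j=\theta(K_2)$ makes the right side $0$, and non-negativity of $\rho$ forces the left side to be $0$ as well.

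For (3), I apply Proposition \ref{prop:xi} to the mirrored pair: reversing orientation turns the positive-to-negative crossing change $K_+\rightsquigarrow K_-$ into a positive-to-negative crossing change $-K_-\rightsquigarrow -K_+$, and the corresponding signatures flip sign. Splitting into the two cases $\sigma(K_+)=\sigma(K_-)$ and $\sigma(K_+)=\sigma(K_-)-2$, I rewrite the inequalities of Proposition \ref{prop:xi} in terms of $\rho_k(-K_\pm)$ after the shift $k=j-\sigma(K_+)/2$. In each case one obtains two inequalities of the form $\rho_k(-K_-)\le \rho_k(-K_+)$ and $\rho_{k+1}(-K_+)\le \rho_{k-1+\epsilon}(-K_-)$ for $\epsilon\in\{0,1\}$, and non-negativity of $\rho$ promotes these to the desired bounds $\theta(K_-)\le\theta(K_+)\le\theta(K_-)+1$. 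For (4), the mirror of a quasi-alternating knot is quasi-alternating, so property (6) of $\delta_j$ gives $\delta_j(-K)=-\sigma(-K)/2=\sigma(K)/2$ for all $j$, whence $\xi_j(-K)=0$ identically. Thus $j(-K)=0$ and $\theta(K)=\max\{0,-\sigma(K)/2\}$, which is the claimed case split.

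The most delicate piece is (5), where one has to combine property (1) (comparison with $\delta$) with the mod $4$ congruence from property (5) of the $\delta_j$. The hypothesis $\delta(K)<-\sigma(K)/2$ translates, via $\delta(-K)=-\delta(K)$, to $\delta(-K)>\sigma(K)/2$. Then $\delta_0(-K)\ge\delta(-K)>\sigma(K)/2$ by property (1) of the $\delta_j$, and property (5) forces $\delta_0(-K)\equiv -\sigma(-K)/2=\sigma(K)/2\pmod 4$, so in fact $\delta_0(-K)\ge\sigma(K)/2+4$. A direct computation then gives $\xi_0(-K)\ge 1$, hence $j(-K)\ge 1$, and since $\sigma(K)\le 0$ one concludes $\theta(K)=j(-K)-\sigma(K)/2\ge 1-\sigma(K)/2$. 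I expect this last step, carefully combining the three separate properties of the $\delta_j$ with the correct signs, to be the main point where care is required; the rest is essentially bookkeeping against Propositions \ref{prop:dsum} and \ref{prop:xi}.
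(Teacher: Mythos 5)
Your proposal is essentially correct and follows the same strategy as the paper: reduce everything to the properties of $\delta_j$ (equivalently $\xi_j$), together with Propositions~\ref{prop:dsum} and \ref{prop:xi}, and then unfold the $\max$ in the definition of $\theta$. The cosmetic difference is that you phrase the bookkeeping in terms of $\rho_j$ throughout, whereas the paper mostly argues with $j(\pm K)$ directly; both are fine. Two small points of care. First, in (2) and (3), the inequalities of Propositions~\ref{prop:dsum} and \ref{prop:xi} are stated for non-negative $\xi$-indices, so when you translate to $\rho$ via the shift by $\sigma/2$ you should note that the resulting indices $\theta(K_i)+\sigma(K_i)/2$ are non-negative; this follows from the already-proved part (1), and the paper sidesteps the issue entirely by proving $j(-K_1-K_2)\le j(-K_1)+j(-K_2)$ and then using $\max\{0,a+b\}\le\max\{0,a\}+\max\{0,b\}$. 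Second, in (5) your invocation of the mod $4$ congruence to upgrade $\delta_0(-K)>\sigma(K)/2$ to $\delta_0(-K)\ge\sigma(K)/2+4$ is valid but slightly redundant: the paper goes directly from $\delta_0(-K)>-\sigma(-K)/2$ to $\xi_0(-K)>0$, and then uses the integrality of $\xi_0$ (which is itself a consequence of property (5) of $\delta_j$) to conclude $\xi_0(-K)\ge 1$. The two routes encode the same information.
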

\begin{proof}
The inequality $\theta(K) \ge -\sigma(K)/2$ follows from the definition $\theta(K) = \max\{ 0 , j(-K)-\sigma(K)/2\}$ and the fact that $j(-K) \ge 0$.

The inequality $\theta(K) \le g_4(K)$ follows from the definition of $\theta(K)$ and the fact that $j(-K) \le g_4(K)+\sigma(K)/2$ (which in turn follows from the fact that $\xi_j(-K) = 0$ for $j \ge g_4(K) + \sigma(K)/2$).

For (2), we note that $Y = \Sigma_2(K_1 + K_2)$ is (equivariantly) diffeomorphic to the connected sum of $Y_1 = \Sigma_2(K_1)$ and $Y_2 = \Sigma_2(K_2)$. Furthermore $\mathfrak{s}_0(K_1 \# K_2) = \mathfrak{s}_0(K_1) \# \mathfrak{s}_0(K_2)$ \cite[Theorem 2.4]{jab}. Then from Proposition \ref{prop:dsum}, we have
\[
d_{\mathbb{Z}_2 , Q^{i+j} }( \Sigma_2(K_1 + K_2) , \mathfrak{s}_0 ) \le d_{\mathbb{Z}_2 , Q^i}( \Sigma_2(K_1) , \mathfrak{s}_0) + d_{\mathbb{Z}_2 , Q^j}( \Sigma_2(K_2) , \mathfrak{s}_0)
\]
for all $i,j \ge 0$. It follows that $\delta_{i+j}(K_1 + K_2) \le \delta_i(K_1) + \delta_j(K_2)$ for all $i,j \ge 0$. Hence $j(K_1 + K_2) \le j(K_1) + j(K_2)$. Similarly $j(-K_1 - K_2 ) \le j(-K_1) + j(-K_2)$. From this and additivity of the signature, we get $\theta(K_1 + K_2 ) \le \theta(K_1) + \theta(K_2)$.

For (3), we consider separately the cases $\sigma(K_+) = \sigma(K_-)$ and $\sigma(K_+) = \sigma(K_-) - 2$. If $\sigma(K_+) = \sigma(K_-)$, then by Proposition \ref{prop:xi} (1), we have
\[
\xi_{j+1}(K_-) \le \xi_j(K_+) \le \xi_j(K_-).
\]
Setting $j = j(K_-)$, we have $\xi_j(K_-) = \xi_{j+1}(K_-) = 0$, hence $\xi_j(K_+) = 0$. So $j(K_+) \le j(K_-)$. If $j(K_-) > 1$, then $\xi_{j-1}(K_-) > 0$ and hence $\xi_{j-2}(K_+) \ge \xi_{j-1}(K_-) > 0$, so $j(K_+) \ge j(K_-) - 1$. Thus we have $j(K_-)-1 \le j(K_+) \le j(K_-)$. Consider the mirrors $-K_+$ and $-K_-$. Then $-K_+$ is obtained from $-K_-$ by changing a positive crossing to a negative crossing and $\sigma(-K_-) = \sigma(-K_+)$, so we get $j(-K_+)-1 \le j(-K_-) \le j(-K_+)$. Hence
\begin{equation}\label{equ:jbar}
j(-K_+) - \frac{\sigma(K_+)}{2} - 1 \le j(-K_-) - \frac{\sigma(K_-)}{2} \le j(-K_+) - \frac{\sigma(K_+)}{2}
\end{equation}
from which is follows easily that $0 \le \theta(K_+) - \theta(K_-) \le 1$. In the case $\sigma(K_+) - \sigma(K_-) = -2$, Proposition \ref{prop:xi} (2) gives
\[
\xi_{j+1}(K_+) \le \xi_j(K_-) \le \xi_j(K_+).
\]
Arguing as above this gives, $j(K_+)-1 \le j(K_-) \le j(K_+)$. A similar argument applied to the mirrors of $K_+$ and $K_-$ gives $j(-K_-)-1 \le j(-K_+) \le j(-K_-)$. Using this and $\sigma(K_-) = \sigma(K_+) + 2$, we again find that the inequality (\ref{equ:jbar}) holds and hence $0 \le \theta(K_+) - \theta(K_-) \le 1$.

For (4), if $K$ is quasi-alternating then $\delta_j(-K) = -\sigma(-K)/2$ for all $j \ge 0$. Hence $j(-K) = 0$ and $\theta(K) = \max \{ -\sigma(K)/2 , 0 \}$.

For (5), if $\delta(K) < -\sigma(K)/2$, then $\delta_0(-K) \ge \delta(-K) > -\sigma(-K)/2$. Hence $\xi_0(-K) > 0$ and $j(-K) \ge 1$. Hence $\theta(K)  \ge j(-K) - \sigma(K)/2 \ge 1 - \sigma(K)/2$.
\end{proof}

\begin{corollary}\label{cor:unk}
For any knot $K$ we have $\theta(K) + \theta(-K) \le u(K)$ where $u(K)$ is the unknotting number of $K$.
\end{corollary}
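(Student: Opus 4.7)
The plan is to reduce the corollary to Theorem \ref{thm:theta}(3) by considering the auxiliary quantity $f(K) = \theta(K) + \theta(-K)$ and showing that it changes by at most $1$ under any single crossing change. Since the unknot $U$ is quasi-alternating with zero signature, property (4) of Theorem \ref{thm:theta} (applied to $U$ and $-U = U$) gives $f(U) = 0$. So if we can establish $|f(K) - f(K')| \le 1$ whenever $K'$ is obtained from $K$ by a single crossing change, then running this through any sequence of $u(K)$ crossing changes that unknots $K$ yields $f(K) \le u(K)$.

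The key step is the one-step estimate. Suppose $K_-$ is obtained from $K_+$ by changing a positive crossing into a negative crossing. Property (3) of Theorem \ref{thm:theta} applied to this crossing change gives $\theta(K_+) - \theta(K_-) \in \{0,1\}$. Now observe that taking the mirror turns a positive crossing into a negative one and vice versa, so $-K_+$ is obtained from $-K_-$ by changing a positive crossing into a negative one. Applying property (3) to this mirrored crossing change gives $\theta(-K_-) - \theta(-K_+) \in \{0,1\}$, equivalently $\theta(-K_+) - \theta(-K_-) \in \{-1,0\}$. Adding the two bounds,
\[
f(K_+) - f(K_-) = \bigl[\theta(K_+)-\theta(K_-)\bigr] + \bigl[\theta(-K_+)-\theta(-K_-)\bigr] \in \{-1,0,1\}.
\]
Thus $|f(K_+) - f(K_-)| \le 1$ for any single crossing change (in either direction).

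To conclude, choose a sequence of $u(K)$ crossing changes transforming $K$ into the unknot $U$, and apply the one-step estimate telescopically to obtain $|f(K) - f(U)| \le u(K)$. Combined with $f(U) = 0$ this gives $\theta(K) + \theta(-K) = f(K) \le u(K)$, as desired. There is no real obstacle here beyond the simultaneous use of property (3) for $K_+ \to K_-$ and its mirror counterpart for $-K_- \to -K_+$; the slightly subtle point to record is that property (3) gives a one-sided bound, and the bound on $f$ comes precisely from cancelling the two one-sided bounds against each other in opposite directions.
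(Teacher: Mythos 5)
Your proof is correct and follows essentially the same approach as the paper: bound the change of $\theta(K)+\theta(-K)$ per crossing change using Theorem \ref{thm:theta}(3) applied both to the knot and its mirror, then telescope along an unknotting sequence of length $u(K)$. The paper states this more tersely ("each crossing change can increase $\theta(K)+\theta(-K)$ by at most one"), but the content is identical; your write-up simply makes the mirrored application of property (3) and the base case $f(U)=0$ explicit.
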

\begin{proof}
By Theorem \ref{thm:theta} (3), each crossing change can increase $\theta(K) + \theta(-K)$ by at most one. Hence by considering a sequence of $u(K)$ crossing changes starting at the unknot and finishing with $K$, we get $\theta(K) + \theta(-K) \le u(K)$.
\end{proof}

\begin{proposition}
For prime knots with $9$ or fewer crossings we have that $\theta(K) = \nu^+(K)$ and $\theta(K) = \nu^+(-K)$, except for $K=9_{42}$. For $K = 9_{42}$ we have $\theta(K) = \nu^+(K) = \nu^+(-K) = 0$ and $\theta(-K) = 1$. 
\end{proposition}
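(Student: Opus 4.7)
The plan is to organise the verification around whether or not the knot is quasi-alternating, thereby reducing the genuine work to a short explicit list. For a quasi-alternating knot $K$, Theorem~\ref{thm:theta}~(4) gives $\theta(K)=\max\{-\sigma(K)/2,\,0\}$, and since the mirror of a quasi-alternating knot is quasi-alternating, the same identity applies to $\theta(-K)$. On the Heegaard Floer side, quasi-alternating knots are Floer-thin, so $\nu^+(K)=\max\{\tau(K),\,0\}=\max\{-\sigma(K)/2,\,0\}$, and likewise for $\nu^+(-K)$. Thus $\theta$ and $\nu^+$ coincide on both $K$ and $-K$ for every quasi-alternating knot, which disposes of all but finitely many entries in the tables through nine crossings.

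The remaining prime knots with at most nine crossings that fail to be quasi-alternating form a short explicit list. For each of them the values of $\nu^+(\pm K)$ are available from existing knot Floer computations. The values $\theta(\pm K)$ are to be obtained by computing the $\delta_j$-invariants, equivalently the $\xi_j$-invariants, of the branched double cover $\Sigma_2(K)$ equipped with its distinguished spin$^c$-structure $\mathfrak{s}_0$, and then assembling them via the formula $\theta(K)=\max\{0,\,j(-K)-\sigma(K)/2\}$. For each knot on the list $\Sigma_2(K)$ is a small Seifert-fibred rational homology sphere, so $HSW^*_{\mathbb{Z}_2}(\Sigma_2(K),\mathfrak{s}_0)$ is accessible by the techniques of \cite{bh}: reading off its $\mathbb{F}[U,Q]$-module structure produces the $\delta_j$'s, and hence $\theta(\pm K)$, which one matches against the tabulated $\nu^+$-values.

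The only knot for which the two invariants disagree is $9_{42}$, and it must be treated individually. The equalities $\theta(9_{42})=0$ and $\nu^+(\pm 9_{42})=0$ follow from the previous step, as does the upper bound $\theta(-9_{42})\le 1$ via Theorem~\ref{thm:theta}~(1) together with the classical equality $g_4(9_{42})=1$. The nontrivial content is the strict inequality $\theta(-9_{42})\ge 1$: depending on the signature convention this is either immediate from Theorem~\ref{thm:theta}~(1), or, if Theorem~\ref{thm:theta}~(5) applies, follows from $\delta(9_{42})\neq -\sigma(9_{42})/2$; in either formulation the real input is an explicit computation of the equivariant Seiberg-Witten-Floer cohomology of $\Sigma_2(9_{42})$ showing the appropriate $\xi_j$ is strictly positive. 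The main obstacle throughout the proof is precisely this equivariant Floer calculation for the handful of non-quasi-alternating knots, and in particular the certification at $9_{42}$ that distinguishes $\theta(-9_{42})$ from $\nu^+(-9_{42})$; the details of these computations are supplied by Section~\ref{sec:ex}.
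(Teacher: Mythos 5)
Your reduction to the non-quasi-alternating cases via Theorem~\ref{thm:theta}~(4) and thinness of quasi-alternating knots is exactly what the paper does. But from that point on the two arguments diverge sharply, and your version has a real gap. You propose to settle the remaining knots --- $8_{19}$, $9_{42}$, $9_{46}$ --- by computing the $\mathbb{F}[U,Q]$-module $HSW^*_{\mathbb{Z}_2}(\Sigma_2(K),\mathfrak{s}_0)$ for each and reading off the $\delta_j$'s, and you defer these computations to Section~\ref{sec:ex}. That section contains no such computations: its examples are torus knots and Whitehead doubles, not $\Sigma_2(8_{19})$, $\Sigma_2(9_{42})$, or $\Sigma_2(9_{46})$. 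So the load-bearing step of your argument is neither carried out nor actually available elsewhere in the paper.

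The paper shows these computations are unnecessary. For $8_{19}$ one has $-\sigma/2 = g_4 = u = 3$, so the squeeze $-\sigma(K)/2 \le \theta(K) \le g_4(K)$ from Theorem~\ref{thm:theta}~(1) pins $\theta(8_{19})=3$, and Corollary~\ref{cor:unk} (the unknotting-number bound $\theta(K)+\theta(-K)\le u(K)$) then forces $\theta(-8_{19})=0$. For $9_{42}$ one has $\sigma = 2$, $g_4 = u = 1$, so the same squeeze applied to $-K$ gives $1 = -\sigma(-K)/2 \le \theta(-K) \le g_4(-K) = 1$, hence $\theta(-9_{42})=1$, and again Corollary~\ref{cor:unk} gives $\theta(9_{42})=0$; the $\nu^+$-values come from the cited result of Sano--Sato using $\tau(9_{42})=0$. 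Finally $9_{46}$ is slice, so everything vanishes. You actually gesture at this elementary route for $9_{42}$ (``either immediate from Theorem~\ref{thm:theta}~(1) \dots'') but treat it as contingent on a signature convention and fall back on the Floer computation as the ``real input.'' It is not: the signature bound closes the gap unconditionally, the unknotting-number bound handles the opposite sides, and no equivariant Seiberg--Witten--Floer computation is needed anywhere in the proof.
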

\begin{proof}
All prime knots with $9$ or fewer crossings are quasi-alternating except for $8_{19}, 9_{42}$ and $9_{46}$, so we only need to check these three cases. The knot $K=8_{19}$ has $-\sigma(K)/2 = \tau(K) = g_4(K) = u(K) = 3$. Hence $\theta(K) = \nu^+(K) = 3$. Then since $\theta(K) + \theta(-K) = 3 + \theta(-K) \le u(K)=3$, we must have $\theta(-K) = 0$. Similarly, $\nu^+(K) + \nu^+(-K) \le 3$, so $\nu^+(-K) = 0$.

The knot $K = 9_{42}$ has $\sigma(K)/2 = g_4(K) = u(K) = 1$. Hence $1 = \sigma(K)/2 \le \omega(-K) \le g_4(K) = 1$, giving $\theta(-K) = 1$. Then since $\theta(K) + \theta(-K) \le u(K) = 1$, we have $\theta(K) = 0$. On the other hand, $\tau(K) = 0$. Then from \cite[Theorem 1.1]{sasa}, it follows that $\nu^+(K) = \nu^+(-K) = 0$.

Lastly, the knot $K=9_{46}$ is slice, so $\theta(K) = \theta(-K) = \nu(K) = \nu^+(-K) = 0$.
\end{proof}

So far we have obtained concordance invariants related to the branched double cover $\Sigma_2(K)$. We now generalise the above results by considering analogous concordance invariants associated to cyclic branched covers of any prime order.

Given a knot $K$ in $S^3$ and a prime number $q$, we let $Y = \Sigma_q(K)$ denote the degree $q$ cyclic cover of $S^3$ branched over $K$. As discussed in Section \ref{sec:branch}, there is a distinguished spin$^c$-structure $\mathfrak{s}_0$ on $Y$. In \cite{jab}, Jabuka defined a series of knot concordance invariants by definining
\[
\delta^{(q)}(K) = 4 \delta( \Sigma_q(K) , \mathfrak{s}_0 ).
\]
Jabuka's invariants are defined more generally for prime powers but we restrict here to the prime case because this is necessary in order to apply the equivariant Froyshov inequality \cite[Theorem 5.3]{bh}. Note also that Jabuka used the notation $\delta_q(K)$ for what we have denoted $\delta^{(q)}(K)$. 

For any odd prime $q$ and any $j \ge 0$, we define a knot concordance invariant $\delta_j^{(q)}(K)$ by setting
\[
\delta_j^{(q)}(K) = 4 \delta_{\mathbb{Z}_q , S^j}( \Sigma_q(K) , \mathfrak{s}_0),
\]
where $\delta_{\mathbb{Z}_q , S^j}$ are the equivariant $\delta$-invariants, as defined in \cite{bh}. We also write $\delta_j^{(2)}(K) = \delta_j(K)$ for the case $q=2$.

\begin{proposition}
Let $q$ be an odd prime. The concordance invariants $\delta_j^{(q)}(K)$ satisfy the following properties:
\begin{itemize}
\item[(1)]{$\delta^{(q)}_0(K) \ge \delta^{(q)}(K)$, where $\delta^{(q)}(K)$ is the Jabuka invariant \cite{jab}.}
\item[(2)]{$\delta^{(q)}_{j+1}(K) \le \delta^{(q)}_j(K)$ for all $j \ge 0$.}
\item[(3)]{$\delta^{(q)}_j(K) \ge -\sigma^{(q)}(K)/2$ for all $j \ge 0$ and $\delta^{(q)}_j(K) = -\sigma^{(q)}(K)/2$ for $2j \ge (q-1)g_4(K)-\sigma^{(q)}(K)/2$.}
\item[(4)]{$\delta^{(q)}_j(K) + \delta^{(q)}_j(-K) \ge 0$ for all $j \ge 0$.}
\item[(5)]{$\delta^{(q)}_j(K) = -\sigma^{(q)}(K)/2 \; ({\rm mod} \; 4)$ for all $j \ge 0$.}
\end{itemize}
\end{proposition}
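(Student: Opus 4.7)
The plan is to follow the corresponding $q=2$ proof from \cite[\textsection 6]{bh}, replacing the ring $\mathbb{F}[U,Q]$ by $R \cong \mathbb{F}[U,R,S]/(R^2)$ and the $Q^j$-filtration by the $S^j$-filtration, with branched-cover data supplied by Lemma \ref{lem:branch} and with $\sigma$-invariant spin$^c$-structures provided by Proposition \ref{prop:spinc2}. For (1), I would use the forgetful map $HSW^*_G(Y, \mathfrak{s}_0) \to HSW^*(Y, \mathfrak{s}_0)$: an element $x \in HSW^i_G$ realising $\iota x \equiv U^k \tau \pmod{S}$ descends to a nonzero element in the non-equivariant localised cohomology in degree $i$, witnessing $d(Y,\mathfrak{s}_0) \le d_{G,S^0}(Y, \mathfrak{s}_0)$; rescaling by $4$ yields (1). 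For (2), multiplication by $S$ (which has degree $2$) sends a witness for $d_{G,S^j}$ in degree $i$ to a witness for $d_{G,S^{j+1}}$ in degree $i+2$, so $d_{G,S^{j+1}} \le (i+2) - 2(j+1) = d_{G,S^j}$.

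For (3), I would take a smooth, oriented, properly embedded surface $\Sigma \subset D^4$ of genus $g = g_4(K)$ bounding $K$, and let $\pi : W \to D^4$ be the $q$-fold cyclic branched cover. Since $H^2(D^4; \mathbb{Z}) = 0$, the class $c = 0$ is trivially a characteristic, so Proposition \ref{prop:spinc2} produces a $\sigma$-invariant spin$^c$-structure $\mathfrak{s}$ on $W$ with $c_1(\mathfrak{s}) = 0$ restricting to $\mathfrak{s}_0$ on $\partial W = \Sigma_q(K)$. By Lemma \ref{lem:branch}, $b_1(W) = 0$, $\sigma(W) = \sigma^{(q)}(K)$, $b_+(W) = (q-1)g + \sigma^{(q)}(K)/2$ and $b_-(W) = (q-1)g - \sigma^{(q)}(K)/2$. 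The equivariant Froyshov inequality \cite[Theorem 5.3]{bh}, applied to $\overline{W}$ regarded as a cobordism from $\Sigma_q(K)$ to $\emptyset$, yields both the uniform lower bound $\delta_j^{(q)}(K) \ge -\sigma^{(q)}(K)/2$ and the stabilisation: once $2j \ge b_-(W) = (q-1)g - \sigma^{(q)}(K)/2$, the Froyshov shift exactly matches the signature term and the bound is saturated.

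For (4), Spanier--Whitehead duality between $I(Y, \mathfrak{s})$ and $I(-Y, \mathfrak{s})$ induces a non-degenerate $R$-equivariant pairing on the localised cohomologies; pairing witnesses for $\delta_j^{(q)}(K)$ and $\delta_j^{(q)}(-K)$ produces a nonzero element in a specific degree of $U^{-1}R\cdot\tau$, forcing the sum of their degrees to be non-negative. For (5), the classical mod-$2$ congruence $d(Y,\mathfrak{s}) \equiv (c_1(\mathfrak{s})^2 - \sigma(X))/4 \pmod{2}$, applied to the branched cover $W$ above (with $c_1(\mathfrak{s}) = 0$), gives $d(\Sigma_q(K), \mathfrak{s}_0) \equiv -\sigma^{(q)}(K)/4 \pmod 2$; since $d_{G,S^j}$ differs from $d$ by an even integer coming from even-degree shifts in the filtration, multiplication by $2$ yields $\delta_j^{(q)}(K) \equiv -\sigma^{(q)}(K)/2 \pmod 4$. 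The main obstacle is (3): one must track carefully how the $\mathbb{Z}_q$-isotypic decomposition of $H^{\pm}(W ; \mathbb{C})$ interacts with the $S^j$-filtration so that the Froyshov stabilisation threshold matches $b_-(W)$ exactly, which is the odd-$q$ analog of the $Q^j$-bookkeeping used in the $q=2$ case.
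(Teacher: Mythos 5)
The paper's ``proof'' of this proposition is literally a one-line pointer (``The proof is a straightforward extension of the $q=2$ case \cite[Proposition 6.2]{bh} to an arbitrary prime, so we omit the details''), and your sketch reconstructs exactly the extension the paper has in mind: replace $\mathbb{F}[U,Q]$ by $\mathbb{F}[U,R,S]/(R^2)$ with the $S$-filtration (noting $\deg S = 2$ so that $d_{G,S^{j+1}} \le d_{G,S^j}$ by $S$-multiplication), use the forgetful map for (1), the branched cover of $D^4$ over a minimal-genus slice surface for (3), Spanier--Whitehead duality for (4), and a grading congruence for (5). So you are taking the same approach.

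One genuine imprecision in your part (3): the equivariant Froyshov inequality applied to $\overline{W}$, viewed as a cobordism from $\Sigma_q(K)$ to $\emptyset$ (with $H^+(\overline{W})^G \cong H^-(D^4) = 0$), gives only the \emph{upper} bound $\delta_j^{(q)}(K) \le -\sigma^{(q)}(K)/2$ once $2j \ge b_+(\overline{W}) = b_-(W) = (q-1)g - \sigma^{(q)}(K)/2$; it does \emph{not} give the uniform lower bound. For the lower bound you need to apply the Froyshov inequality to $W$ itself as a cobordism from $\emptyset$ to $\Sigma_q(K)$ (with $H^+(W)^G \cong H^+(D^4) = 0$), which yields $-\sigma^{(q)}(K)/8 = \delta(W,\mathfrak{s}) \le \delta_{G,S^j}(\Sigma_q(K),\mathfrak{s}_0)$ for all $j\ge 0$. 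Both orientations are needed, exactly as in the crossing-change argument of Proposition~\ref{prop:xi}. You are also right that the ``$S^j$-bookkeeping'' is the crux of (3): the Froyshov shift for odd $q$ is by $b_+/2$ rather than $b_+$, and the point that makes it work is that $b_\pm(W) = (q-1)g \pm \sigma^{(q)}(K)/2$ is automatically even, since $q-1$ is even and $\sigma^{(q)}(K) \equiv 0 \pmod 4$ for odd $q$ (as the paper notes in Section~\ref{sec:branch}); this divisibility is precisely what ensures the threshold in (3) is attained at an integer $j$.
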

\begin{proof}
The proof is a straightforward extension of the $q=2$ case \cite[Proposition 6.2]{bh} to an arbitrary prime, so we omit the details.
\end{proof}

For convenience we set
\[
\xi_j^{(q)}(K) = \frac{1}{4} \delta^{(q)}_j(K) + \frac{1}{8}\sigma^{(q)}(K).
\]
Then $\xi^{(q)}_j(K)$ is integer-valued, decreasing as a function of $j$ and is zero for $2j \ge (q-1)g_4(K) -\sigma^{(q)}(K)/2$.

As in the $q=2$ case, we have the following properties:

\begin{proposition}\label{prop:xi2}
Let $K_+,K_-$ be knots where $K_-$ is obtained from $K_+$ by changing a positive crossing into a negative crossing. Then $-(q-1) \le \sigma^{(q)}(K_+) - \sigma^{(q)}(K_-) \le 0$ or $2$ and for all $j \ge 0$, we have
\[
\xi^{(q)}_{j + \alpha}(K_-) \le \xi_j^{(q)}(K_+) \text{ and } \xi^{(q)}_{j+ \beta } \le \xi^{(q)}_j(K_-),
\]
where $\alpha = (q-1)/2 + \sigma^{(q)}(K_+)/4 - \sigma^{(q)}(K_-)/4$ and $\beta = \sigma^{(q)}(K_-)/4 - \sigma^{(q)}(K_+)/4$.
\end{proposition}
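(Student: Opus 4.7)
The plan is to adapt the proof of Proposition \ref{prop:xi} to $q$-fold branched covers, using the $S^j$-variant of the equivariant Froyshov inequality \cite[Theorem 5.3]{bh} in place of the $Q^j$-variant. The bound on $\sigma^{(q)}(K_+) - \sigma^{(q)}(K_-)$ is immediate from the standard fact that $\sigma_{K_-}(\omega) - \sigma_{K_+}(\omega) \in \{0,2\}$ for every unit complex number $\omega \ne 1$, summed over the $q-1$ non-trivial $q$-th roots of unity.

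For the first inequality $\xi^{(q)}_{j+\alpha}(K_-) \le \xi^{(q)}_j(K_+)$, I take $X = [0,1] \times S^3$ and let $\Sigma \subset X$ be the genus-$1$ surface obtained by resolving the double point of the regular homotopy between $K_-$ and $K_+$. Since $H_2(X, \partial X; \mathbb{Z}) = 0$, divisibility of $[\Sigma]$ by $q$ is automatic, so the $q$-fold cyclic branched cover $W \to X$ exists and provides a $\mathbb{Z}_q$-equivariant cobordism from $\Sigma_q(K_-)$ to $\Sigma_q(K_+)$. Lemma \ref{lem:branch} gives $\sigma(W) = \sigma^{(q)}(K_+) - \sigma^{(q)}(K_-)$, $b_2(W) = 2(q-1)$, and $b_\pm(W) = (q-1) \pm \sigma(W)/2$. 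Use Proposition \ref{prop:spinc2} (odd case) to pick a $\mathbb{Z}_q$-invariant spin$^c$-structure $\mathfrak{s}$ on $W$ with $c_1(\mathfrak{s}) = 0$ in $H^2(W; \mathbb{Q})$ restricting to $\mathfrak{s}_0$ on each boundary, and apply the $S^j$-version of the equivariant Froyshov inequality to $(W, \mathfrak{s})$. For the reverse inequality $\xi^{(q)}_{j+\beta}(K_+) \le \xi^{(q)}_j(K_-)$, I would apply the equivariant Froyshov inequality to the reversed cobordism $\overline{W}$, supplemented by a blow-up construction analogous to the $q=2$ case (taking $X' = \mathbb{CP}^2 \setminus (B^4 \sqcup B^4)$ and the genus-$0$ proper transform $\Sigma'$) when the sharper inequality requires it.

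The step I expect to be the main obstacle is pinning down the $S^j$-shifts produced by the equivariant Froyshov inequality so that they match $\alpha$ and $\beta$ precisely. Since $H_2(X;\mathbb{Z})=0$, Lemma \ref{lem:cohom} shows that all of $H^2(W;\mathbb{R})$ lies in the non-invariant part, and its decomposition into non-trivial characters of $\mathbb{Z}_q$ must be tracked together with the signatures of the restricted intersection forms; this representation-theoretic bookkeeping is what converts $\sigma(W)$ and $b_\pm(W)$ into the stated shifts $\alpha = (q-1)/2 + (\sigma^{(q)}(K_+) - \sigma^{(q)}(K_-))/4$ and $\beta = (\sigma^{(q)}(K_-) - \sigma^{(q)}(K_+))/4$. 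A secondary technical issue is arranging divisibility of $[\Sigma']$ by $q$ in the blow-up step, which for odd $q$ is not automatic and may require additional blow-ups. Once the shifts are identified, converting from $\delta^{(q)}_j$ to $\xi^{(q)}_j = \delta^{(q)}_j/4 + \sigma^{(q)}/8$ yields the claimed inequalities and the remainder is routine.
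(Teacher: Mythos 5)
Your overall plan matches the paper's: resolve or blow up the double point of the crossing-change annulus, take the $q$-fold cyclic branched cover, and apply the equivariant Froyshov inequality to $W$ and to $\overline{W}$. Two of the difficulties you flag, however, deserve comment. The first — that pinning down the $S^j$-shift requires tracking the decomposition of $H^2(W;\mathbb{R})$ into non-trivial characters of $\mathbb{Z}_q$ — is not an obstacle at all. The inequality of \cite[Theorem~5.3]{bh}, as used throughout this paper (see the proof of Theorem~\ref{thm:in1}), produces a shift in $S$-degree of exactly $b_+(W)/2$ (since $\deg S = 2$), with no character-refined bookkeeping. The identities $b_+(W) = 2\alpha$ for the genus-$1$ resolution and $b_+(\overline W) = b_-(W) = 2\beta$ for the blow-up are immediate arithmetic from Lemma~\ref{lem:branch}.

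The second concern is the real one, and your proposed fix does not close the gap. You are right that the \emph{proper transform} of the nodal annulus in $\mathbb{CP}^2 \setminus (B^4 \sqcup B^4)$ lies in the class $\pm 2E$, which is not divisible by an odd prime $q$ — and further blow-ups cannot make $\pm 2E$ divisible by $q$. What the paper (implicitly) does instead is a different desingularisation: delete a ball around the $(-1)$-double point, whose link is a Hopf link of linking number $-1$, glue in $\mathbb{CP}^2 \setminus B^4$ (the Euler number $+1$ disc bundle over $S^2$), and cap the Hopf link with two fibre discs of that disc bundle, one taken with reversed orientation. This yields an embedded genus-$0$ surface $\Sigma$ with $[\Sigma] = E - E = 0$, so divisibility by $q$ is automatic for every prime and no extra blow-ups are needed. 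That $[\Sigma] = 0$ is required can be read off from the $q=2$ proof of Proposition~\ref{prop:xi}: the values $\sigma(W) = \sigma(K_+) - \sigma(K_-) + 2$ and $c_1(\mathfrak{s})^2 = 2$ with $c^2=1$ are consistent with Lemma~\ref{lem:branch} and Proposition~\ref{prop:spinc2} only if $[\Sigma]^2 = 0$. Once you replace the proper transform with this null-homologous resolution, the rest of your sketch goes through and agrees with the paper.
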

\begin{proof}
The proof is a straightforward extension of the proof of Proposition \ref{prop:xi}, so we give only a sketch. Resolving the singularity of a regular homotopy associated to the crossing change gives a genus $1$ surface $\Sigma$ properly embedded in $X = [0,1] \times S^3 \times$ which meets the ingoing boundary of $X$ in $K_-$ and the outgoing boundary in $K_+$. Let $W \to X$ be the $q$-fold cover of $X$ branched over $\Sigma$. This gives a $\mathbb{Z}_q$-equivariant cobordism from $\Sigma_q(K_-)$ to $\Sigma_q(K_+)$. One finds that $b_2(W) = 2(q-1)$ and that $\sigma(W) = \sigma^{(q)}(K_+) - \sigma^{(q)}(K_-)$, hence $b_+(W) = (q-1) + \sigma^{(q)}(K_+)/2 - \sigma^{(q)}(K_-)/2 = 2\alpha$. Note that $2\alpha = b_+(W) \ge 0$, hence $\sigma^{(q)}(K_+) - \sigma^{(q)}(K_-) \ge -(q-1)$. Applying the equivariant Froyshov inequality to $W$ gives $\xi^{(q)}_{j+\alpha}(K_-) \le \xi^{(q)}_j(K_+)$.

Consider again a regular homotopy associated to the crossing change. Instead of resolving the crossing, we take a blow-up. This gives a genus zero surface $\Sigma$ properly embedded in $X$, where $X$ is $\mathbb{CP}^2$ with two balls removed and $\Sigma$ meets the ingoing boundary of $X$ in $K_-$ and the outgoing boundary in $K_+$. Let $\pi : W \to X$ be the $q$-fold cover of $X$ branched over $\Sigma$. We find $b_2(W) = q$ and $\sigma(W) = q + \sigma^{(q)}(K_+) - \sigma^{(q)}(K_-)$, hence $b_-(W) = \sigma^{(q)}(K_-)/2 - \sigma^{(q)}(K_+)/2 = 2\beta$. Since $2\beta = b_-(W) \ge 0$, we have $\sigma^{(q)}(K_+) - \sigma^{(q)}(K_-) \le 0$. Applying the Froyshov inequality to $\overline{W}$ (with $\Sigma_q(K_+)$ regarded as an ingoing boundary and $\Sigma_q(K_-)$ an outgoing boundary gives $\xi^{(q)}_{j+\beta}(K_+) \le \xi^{(q)}_j(K_-)$.
\end{proof}

For an odd prime $q$, we define
\[
\theta^{(q)}(K) = \max\left\{ 0 , \dfrac{  2 j^{(q)}(-K)}{q-1}-\dfrac{\sigma^{(q)}(K)}{2(q-1)}\right\},
\]
where $j^{(q)}(-K)$ is the smallest $j$ such that $\xi^{(q)}_j(-K)=0$. Note that by this definition $\theta^{(q)}(K)$ is valued in $\frac{1}{(q-1)}\mathbb{Z}$. 

\begin{theorem}\label{thm:thetaq}
The invariant $\theta^{(q)}(K)$ satisfies the following properties:
\begin{itemize}
\item[(1)]{$-\dfrac{\sigma^{(q)}(K)}{2(q-1)} \le \theta^{(q)}(K) \le g_4(K)$.}
\item[(2)]{$\theta^{(q)}(K_1 + K_2) \le \theta^{(q)}(K_1) + \theta^{(q)}(K_2)$.}
\item[(3)]{Let $K_+,K_-$ be knots where $K_-$ is obtained from $K_+$ by changing a positive crossing into a negative crossing. Then
\[
0 \le \theta^{(q)}(K_+) - \theta^{(q)}(K_-) \le 1.
\]
}
\item[(4)]{If $\Sigma_q(K)$ is an $L$-space, then 
\[
\theta^{(q)}(K) = \begin{cases} -\dfrac{\sigma^{(q)}(K)}{2(q-1)}, & \sigma^{(q)}(K) \le 0, \\ 0, & \sigma^{(q)}(K) > 0. \end{cases}
\]
}
\item[(5)]{If $\delta^{(q)}(K) < -(q-1)\sigma^{(q)}(K)/(2(q-1))$ and $\sigma^{(q)}(K) \le 0$, then $\theta^{(q)}(K) \ge 1/(q-1) -\sigma^{(q)}(K)/(2(q-1))$.
}
\end{itemize}
\end{theorem}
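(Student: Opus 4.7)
The plan is to follow the architecture of the proof of Theorem \ref{thm:theta}, substituting the odd-prime inputs Proposition \ref{prop:dsum}, Proposition \ref{prop:xi2}, and the properties of $\delta^{(q)}_j$ stated above. First observe that $\xi^{(q)}_j(K)$ is integer-valued (by the mod-$4$ property), non-negative (from $\delta^{(q)}_j(K) \ge -\sigma^{(q)}(K)/2$), decreasing in $j$, and eventually zero, so $j^{(q)}(-K) := \min\{j \ge 0 : \xi^{(q)}_j(-K) = 0\}$ is a well-defined non-negative integer. Property (1) then follows directly from the definition of $\theta^{(q)}(K)$: the lower bound reduces to $j^{(q)}(-K) \ge 0$ when $\sigma^{(q)}(K) \le 0$ and is trivial otherwise; the upper bound comes from $j^{(q)}(-K) \le ((q-1) g_4(K) + \sigma^{(q)}(K)/2)/2$, after which the signature terms cancel exactly in $\theta^{(q)}(K) \le g_4(K)$. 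Property (2) follows by applying Proposition \ref{prop:dsum} to the equivariant identification $\Sigma_q(K_1 \# K_2) \cong \Sigma_q(K_1) \# \Sigma_q(K_2)$ together with $\mathfrak{s}_0(K_1 \# K_2) = \mathfrak{s}_0(K_1) \# \mathfrak{s}_0(K_2)$ \cite[Theorem 2.4]{jab}, yielding $j^{(q)}(\pm(K_1 + K_2)) \le j^{(q)}(\pm K_1) + j^{(q)}(\pm K_2)$, combined with additivity of $\sigma^{(q)}$ and the elementary inequality $\max\{0, a+b\} \le \max\{0, a\} + \max\{0, b\}$.

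For (3), the key observation is that Proposition \ref{prop:xi2} applied to the mirror pair $(-K_-, -K_+)$ (with $-K_-$ playing the role of the ``plus'' knot, since the positive crossing in $K_+$ becomes a negative crossing in $-K_+$) yields \emph{the same} shift constants
\[
\alpha = (q-1)/2 + (\sigma^{(q)}(K_+) - \sigma^{(q)}(K_-))/4, \qquad \beta = (\sigma^{(q)}(K_-) - \sigma^{(q)}(K_+))/4,
\]
because $\sigma^{(q)}(-K) = -\sigma^{(q)}(K)$. Setting $j = j^{(q)}(-K_-)$ and then $j = j^{(q)}(-K_+)$, and using non-negativity and monotonicity of $\xi^{(q)}$, I obtain
\[
-\beta \;\le\; j^{(q)}(-K_+) - j^{(q)}(-K_-) \;\le\; \alpha.
\]
Writing $T_\pm := 2 j^{(q)}(-K_\pm)/(q-1) - \sigma^{(q)}(K_\pm)/(2(q-1))$, substitution and simplification (using $2\alpha/(q-1) = 1 + (\sigma^{(q)}(K_+) - \sigma^{(q)}(K_-))/(2(q-1))$ and the analogous identity for $-2\beta/(q-1)$) gives $0 \le T_+ - T_- \le 1$. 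A short case analysis on whether each of $T_+, T_-$ is positive shows the inequality is preserved after taking $\max\{0,\cdot\}$, which is exactly $0 \le \theta^{(q)}(K_+) - \theta^{(q)}(K_-) \le 1$.

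For (4), the $L$-space condition on $\Sigma_q(K)$ passes to $\Sigma_q(-K) = -\Sigma_q(K)$. For an $L$-space the equivariant Seiberg--Witten--Floer Conley index is $S^1 \times G$-equivariantly a representation sphere, so $HSW^*_G(\Sigma_q(\pm K), \mathfrak{s}_0)$ is a free $R$-module of rank one. This forces $d_{\mathbb{Z}_q, S^j}(\Sigma_q(\pm K), \mathfrak{s}_0)$ to be independent of $j$; combining with the large-$j$ limit $\delta^{(q)}_j(\pm K) = -\sigma^{(q)}(\pm K)/2$ from property (3) shows $\delta^{(q)}_j(\pm K) = -\sigma^{(q)}(\pm K)/2$ for all $j$, hence $\xi^{(q)}_0(-K) = 0$, $j^{(q)}(-K) = 0$, and the stated formula for $\theta^{(q)}(K)$ follows. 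For (5), property (1) of $\delta^{(q)}_j$ gives $\delta^{(q)}_0(-K) \ge \delta^{(q)}(-K)$; applying $d(-Y,\mathfrak{s}) = -d(Y,\mathfrak{s})$ to $\Sigma_q(-K) = -\Sigma_q(K)$ yields $\delta^{(q)}(-K) = -\delta^{(q)}(K)$, so the hypothesis $\delta^{(q)}(K) < -\sigma^{(q)}(K)/2$ produces $\delta^{(q)}_0(-K) > -\sigma^{(q)}(-K)/2$, i.e. $\xi^{(q)}_0(-K) > 0$; integrality forces $j^{(q)}(-K) \ge 1$, and substitution gives $\theta^{(q)}(K) \ge 2/(q-1) - \sigma^{(q)}(K)/(2(q-1))$, which implies the stated bound.

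The main obstacle is the signature bookkeeping in (3): one must verify that the constants $\alpha,\beta$ produced by Proposition \ref{prop:xi2} telescope precisely so that the clean increment-by-one inequality emerges after the signature terms absorb. A secondary subtlety is the $L$-space input in (4), which relies on the structural fact that the equivariant Conley index of an $L$-space branched cover is a representation sphere and hence has free $R$-module equivariant cohomology (a natural extension of the arguments in \cite{bh}). Parts (1), (2), and (5) are direct transcriptions of the $q=2$ proofs with the appropriate normalizations.
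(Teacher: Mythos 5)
Your proposal follows the paper's proof of part (3) essentially exactly, differing only in the order of operations: the paper first proves an increment inequality for $\theta^{(q)}(-K_\pm)$ (working with $j^{(q)}(K_\pm)$) and applies the mirror involution at the end, whereas you apply Proposition \ref{prop:xi2} to $(-K_-,-K_+)$ from the start and work directly with $j^{(q)}(-K_\pm)$. Your observation that the shift constants $\alpha,\beta$ produced by the mirror pair coincide with the original ones (by the antisymmetry $\sigma^{(q)}(-K) = -\sigma^{(q)}(K)$) is the same bookkeeping fact the paper uses, and your verification that $2\alpha/(q-1) - (\sigma^{(q)}(K_+)-\sigma^{(q)}(K_-))/(2(q-1)) = 1$ and the analogous cancellation for $\beta$ give $0 \le T_+ - T_- \le 1$ is correct, as is the final truncation-by-$\max\{0,\cdot\}$ step. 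Parts (1), (2), and (5) are correct transcriptions of the $q=2$ arguments (for (5) you actually get the stronger bound $\theta^{(q)}(K) \ge 2/(q-1) - \sigma^{(q)}(K)/(2(q-1))$, which implies the stated one).

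The one place that deserves a flag is part (4). The paper's $q=2$ proof relies on a property of $\delta_j$ established in \cite{bh} specifically for quasi-alternating knots, and the analogous property is \emph{not} listed among the stated properties of $\delta^{(q)}_j$ for odd $q$; the paper simply defers to ``similar to the $q=2$ case.'' Your mechanism is to claim that if $\Sigma_q(K)$ is an $L$-space then the Conley index is an $S^1\times G$-equivariant representation sphere, hence $HSW^*_G$ is free of rank one over $R$, hence the $d_{\mathbb{Z}_q,S^j}$ are $j$-independent. The deduction ``free of rank one $\Rightarrow$ $j$-independent'' is fine, but the first step is a nontrivial structural assertion that does not follow from the $L$-space condition alone: the $L$-space condition controls the nonequivariant (hence $S^1$-equivariant) homotopy type, but the $G$-action on the Conley index could a priori be exotic even when the underlying $S^1$-spectrum is a sphere, and establishing that it is linear (so the index is a representation sphere) requires an argument beyond what you sketch. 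This is precisely the content that, in the $q=2$ case, is supplied by the quasi-alternating computation in \cite{bh}. Your argument for (4) is therefore plausible in outline but incomplete at that step; since the paper itself does not spell out (4) for odd $q$, the gap is not one you introduced, but it should be acknowledged rather than passed over as ``a natural extension.''
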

\begin{proof}
We prove only part (3), since the other parts are similar to the $q=2$ case. First note that for any knot $K$, we have
\begin{equation}\label{equ:thetabar}
(q-1)\theta^{(q)}(-K) = \max\{ 0 , 2j^{(q)}(K) + \sigma^{(q)}(K)/2 \}.
\end{equation}
Now let $K_-$ be obtained from $K_+$ by changing a positive crossing into a negative crossing. Then from Proposition \ref{prop:xi2} we have
\[
\xi^{(q)}_{j + \alpha}(K_-) \le \xi_j^{(q)}(K_+) \text{ and } \xi^{(q)}_{j+ \beta } \le \xi^{(q)}_j(K_-),
\]
where $2\alpha = (q-1) + \sigma^{(q)}(K_+)/2 - \sigma^{(q)}(K_-)/2$ and $2\beta = \sigma^{(q)}(K_-)/2 - \sigma^{(q)}(K_+)/2$. Now if $\xi^{(q)}_j(K_+) = 0$, then $\xi^{(q)}_{j+\alpha}(K_-) \le 0$, hence $\xi^{(q)}_{j+\alpha}(K_-) = 0$. It follows that $j^{(q)}(K_-) \le j^{(q)}(K_+) + \alpha$. Similarly, if $\xi^{(q)}_j(K_-) = 0$, then $\xi^{(q)}_{j+\beta}(K_+) \le 0$, so we deduce that $j^{(q)}(K_+) \le j^{(q)}(K_-) + \beta$. Putting these together, we have
\[
j^{(q)}(K_+) - \beta \le j^{(q)}(K_-) \le j^{(q)}(K_+) + \alpha.
\]
From the definitions of $\alpha$ and $\beta$, this is equivalent to
\begin{equation}\label{equ:jq}
2 j^{(q)}(K_+) +\frac{\sigma^{(q)}(K_+)}{2} \le 2 j^{(q)}(K_-) + \frac{\sigma^{(q)}(K_-)}{2} \le 2 j^{(q)}(K_+) + \frac{\sigma^{(q)}(K_+)}{2} + (q-1).
\end{equation}
If $2j^{(q)}(K_+) +\sigma^{(q)}(K_+)/2 \ge 0$, then we also have $2j^{(q)}(K_-) +\sigma^{(q)}(K_-)/2 \ge 0$. So from (\ref{equ:thetabar}), we see that $(q-1)\theta^{(q)}(-K_+) = 2 j^{(q)}(K_+) +\sigma^{(q)}(K_+)/2$ and $\theta^{(q)}(-K_-) = 2j^{(q)}(K_-) +\sigma^{(q)}(K_-)/2$. Then from (\ref{equ:jq}), we get
\[
(q-1)\theta^{(q)}(-K_+) \le (q-1) \theta^{(q)}(-K_-) \le (q-1)\theta^{(q)}(-K_+) + (q-1).
\]

If $j^{(q)}(K_+) +\sigma^{(q)}(K_+)/2 \le 0$, then $\theta^{(q)}(-K_+) = 0$ and from (\ref{equ:jq}) we get
\[
(q-1)\theta^{(q)}(-K_-) \le 2 j^{(q)}(K_-) +\frac{\sigma^{(q)}(K_-)}{2} \le 2j^{(q)}(K_+) +\frac{\sigma^{(q)}(K_+)}{2} + (q-1) \le (q-1),
\]
hence 
\[
0 \le (q-1) \theta^{(q)}(-K_-) \le (q-1).
\]
So in either case, we get that
\begin{equation}\label{equ:thetabar2}
\theta^{(q)}(-K_+) \le \theta^{(q)}(-K_-) \le \theta^{(q)}(-K_+) + 1.
\end{equation}
Lastly, we note that if $K_-$ is obtained from $K_+$ by changing a positive crossing into a negative crossing, then $-K_+$ is obtained from $-K_-$ by changing a positive crossing to a negative crossing. Replacing $K_+$ by $-K_-$ and $K_-$ by $-K_+$ in (\ref{equ:thetabar2}), we get
\[
\theta^{(q)}(K_-) \le \theta^{(q)}(K_+) \le \theta^{(q)}(K_-) + 1
\]
or equivalently $0 \le \theta^{(q)}(K_+) - \theta^{(q)}(K_-) \le 1$.
\end{proof}

Using the same argument as the $q=2$ case, we have:

\begin{corollary}
For any knot $K$ we have $\theta^{(q)}(K) + \theta^{(q)}(-K) \le u(K)$.
\end{corollary}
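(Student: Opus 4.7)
The plan is to mirror the proof of Corollary \ref{cor:unk} verbatim, replacing Theorem \ref{thm:theta}(3) with its generalization Theorem \ref{thm:thetaq}(3). Fix a minimal unknotting sequence
\[
U = K_0, K_1, \ldots, K_{u(K)} = K,
\]
in which each successive knot is obtained from the previous by a single crossing change. Since $\theta^{(q)}(U) = \theta^{(q)}(-U) = 0$, it suffices to show that the quantity $\theta^{(q)}(K_i) + \theta^{(q)}(-K_i)$ increases by at most one at each step, after which telescoping yields the claimed bound $\theta^{(q)}(K) + \theta^{(q)}(-K) \le u(K)$.

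For a single step, suppose $K_{i+1}$ is obtained from $K_i$ by a crossing change. Relabelling if necessary, let $K_+$ denote whichever of the two presents the positive crossing and $K_-$ the one presenting the negative crossing. Theorem \ref{thm:thetaq}(3) then gives
\[
0 \le \theta^{(q)}(K_+) - \theta^{(q)}(K_-) \le 1.
\]
Passing to mirrors reverses the sign of every crossing, so that $-K_-$ now plays the role of the knot with the positive crossing and $-K_+$ that of the knot with the negative one. Applying Theorem \ref{thm:thetaq}(3) to the pair $(-K_-, -K_+)$ therefore yields
\[
0 \le \theta^{(q)}(-K_-) - \theta^{(q)}(-K_+) \le 1.
\]
Adding the two inequalities shows that the change in $\theta^{(q)}(K_i) + \theta^{(q)}(-K_i)$ between consecutive knots in the sequence lies in $\{-1,0,+1\}$, which is precisely the bound needed for the induction.

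There is no real obstacle beyond this bookkeeping: the substantive geometric content (the resolution-and-blowup cobordisms, the equivariant Froyshov inequality, and the choice of spin$^{c}$-structures from Proposition \ref{prop:spinc2}) is already packaged inside Theorem \ref{thm:thetaq}(3). Since $\theta^{(q)}$ is non-negative by definition, no sign issues arise, and the telescoped estimate delivers exactly $u(K)$ on the right-hand side.
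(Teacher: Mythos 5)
Your proof is correct and follows the same route as the paper, which simply invokes Theorem \ref{thm:thetaq}(3) to say that each crossing change increases $\theta^{(q)}(K) + \theta^{(q)}(-K)$ by at most one and then telescopes along an unknotting sequence from the unknot (where both terms vanish). One small imprecision: since $\theta^{(q)}$ takes values in $\tfrac{1}{q-1}\mathbb{Z}$ for odd $q$, the per-step change need not lie in the integer set $\{-1,0,+1\}$ as you wrote, only in the interval $[-1,1]$; this does not affect the argument, since the telescoping estimate requires only that each step increase the sum by at most $1$.
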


We consider a more general concordance invariant $\theta^{(q)}(K,m)$ depending also on a non-negative integer $m$.

\begin{definition}
Let $m$ be a non-negative integer and $q$ a prime. For any knot $K$, define $j^{(q)}(K,m)$ to be the least value of $j$ such that $\delta_j^{(q)}(K) \le m -\sigma^{(q)}(K)/2$. Further, define $\omega^{(q)}(K,m) \in \frac{1}{(q-1)}\mathbb{Z}$ by
\[
\theta^{(2)}(K,m) = \max\left\{ 0 ,  j^{(2)}(-K,m) - \frac{\sigma(K)}{2} \right\}
\]
for $q=2$, and
\[
\theta^{(q)}(K,m) = \max\left\{ 0 , \frac{ 2 j^{(q)}(-K,m)}{q-1} - \frac{\sigma^{(q)}(K)}{2(q-1)} \right\}
\]
for odd $q$.
\end{definition}

\begin{proposition}\label{prop:est}
Let $\ell^{(q)}(K)$ denote the lowest $i$ for which $HF^+_i( \Sigma_q(K) , \mathfrak{s}_0 )$ is non-zero (where $HF^+$ is taken with coefficients in $\mathbb{F} = \mathbb{Z}/q\mathbb{Z}$). Then
\[
j^{(2)}(K,m) \ge \ell^{(2)}(K) - \frac{m}{2} + \frac{\sigma(K)}{4}
\]
and
\[
2 j^{(q)}(K,m) \ge \ell^{(q)}(K) - \frac{m}{2} + \frac{\sigma^{(q)}(K)}{4}
\]
for odd $q$. Hence for all $q$ we have
\[
\theta^{(q)}(K,m) \ge \frac{\ell^{(q)}(-K)}{(q-1)} - \frac{m}{2(q-1)} - \frac{3 \sigma^{(q)}(K)}{4(q-1)}.
\]
In particular, setting $m=0$, we have
\[
\theta^{(q)}(K) \ge \frac{\ell^{(q)}(-K)}{(q-1)} - \frac{3 \sigma^{(q)}(K)}{4(q-1)}.
\]
\end{proposition}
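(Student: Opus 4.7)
The plan is to reduce the three displayed inequalities to a single lower bound on the equivariant Seiberg--Witten--Floer $d$-invariants of $Y=\Sigma_q(K)$, and then prove that bound using the Borel spectral sequence for $G$-equivariant cohomology of the Conley index.

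The bound on $\theta^{(q)}(K,m)$ will follow from the two bounds on $j^{(q)}(K,m)$ applied to $-K$ (using $\sigma^{(q)}(-K)=-\sigma^{(q)}(K)$) and the definitions
\[
\theta^{(2)}(K,m)=\max\{0,\,j^{(2)}(-K,m)-\sigma(K)/2\},\qquad \theta^{(q)}(K,m)=\max\{0,\,2j^{(q)}(-K,m)/(q-1)-\sigma^{(q)}(K)/(2(q-1))\}
\]
for odd $q$; the last displayed inequality is then the $m=0$ special case. So the heart of the matter is the two $j^{(q)}$ inequalities. Unwinding the identity $\delta^{(q)}_j(K)=2d_{\mathbb{Z}_q,\bullet^j}(Y,\mathfrak{s}_0)$, where $\bullet=Q$ for $q=2$ and $\bullet=S$ for $q$ odd, the condition $\delta^{(q)}_j(K)\le m-\sigma^{(q)}(K)/2$ defining $j^{(q)}(K,m)$ rewrites as $d_{\mathbb{Z}_q,\bullet^j}(Y,\mathfrak{s}_0)\le m/2-\sigma^{(q)}(K)/4$, and a short rearrangement shows that the claimed lower bounds on $j^{(q)}(K,m)$ are equivalent to the following inequalities for every $j\ge 0$:
\begin{equation}\label{equ:plankey}
d_{\mathbb{Z}_2,Q^j}(Y,\mathfrak{s}_0)\ge \ell^{(2)}(K)-j\quad(q=2),\qquad d_{\mathbb{Z}_q,S^j}(Y,\mathfrak{s}_0)\ge \ell^{(q)}(K)-2j\quad(q\text{ odd}).
\end{equation}

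To prove (\ref{equ:plankey}) I would use that the non-equivariant Seiberg--Witten--Floer cohomology $HSW^*(Y,\mathfrak{s}_0)$ is isomorphic as an $\mathbb{F}[U]$-module, with matching grading, to $HF^+(Y,\mathfrak{s}_0)$, so its lowest nonzero degree equals $\ell^{(q)}(K)$. Consider the Borel-type spectral sequence
\[
E_2^{p,r}=H^p(BG;\mathbb{F})\otimes HSW^r(Y,\mathfrak{s}_0)\ \Longrightarrow\ HSW^{p+r}_G(Y,\mathfrak{s}_0),\qquad G=\mathbb{Z}_q,
\]
whose induced filtration on $HSW^*_G(Y,\mathfrak{s}_0)$ is compatible with the $(Q)$-adic (respectively $(S)$-adic) filtration of the localization. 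Let $x\in HSW^i_G(Y,\mathfrak{s}_0)$ be a minimum-degree witness for $d_{\mathbb{Z}_2,Q^j}$, so that $\iota x\equiv Q^j U^k\tau\pmod{Q^{j+1}}$ and $d_{\mathbb{Z}_2,Q^j}(Y,\mathfrak{s}_0)=i-j$. The leading $Q$-degree of $\iota x$ bounds the Borel filtration $f$ of $x$ from above by $j$. The nonzero associated graded element $[x]\in gr^f HSW^i_G\cong E_\infty^{f,i-f}$ is a subquotient of $E_2^{f,i-f}=\mathbb{F}Q^f\otimes HSW^{i-f}(Y,\mathfrak{s}_0)$, which forces $HSW^{i-f}(Y,\mathfrak{s}_0)\ne 0$ and hence $i-f\ge\ell^{(2)}(K)$. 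Combining with $f\le j$ gives $d_{\mathbb{Z}_2,Q^j}(Y,\mathfrak{s}_0)=i-j\ge\ell^{(2)}(K)-j$. The odd-$q$ argument is the same, except that $\deg S=2$, so the Borel filtration of a witness for $d_{\mathbb{Z}_q,S^j}$ is bounded by $2j$ rather than $j$, and one obtains $d_{\mathbb{Z}_q,S^j}(Y,\mathfrak{s}_0)\ge\ell^{(q)}(K)-2j$.

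The main obstacle is the filtration compatibility underlying the last step: verifying that the Borel filtration of $x$ is indeed bounded above by the leading equivariant-variable degree of $\iota x$, and that the identification of $HSW^*(Y,\mathfrak{s}_0)$ with $HF^+(Y,\mathfrak{s}_0)$ preserves gradings. Once these two compatibilities are in hand, the remaining spectral-sequence bookkeeping and the algebraic reductions between the $d$-invariant, $j^{(q)}$, and $\theta^{(q)}$ formulations are routine.
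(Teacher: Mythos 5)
Your proposal is correct and follows essentially the same route as the paper: both reduce the claimed inequalities for $j^{(q)}(K,m)$ and $\theta^{(q)}(K,m)$ to the statement that the minimal degree $i$ of a witness for $d_{\mathbb{Z}_q,\bullet^j}(Y,\mathfrak{s}_0)$ is at least the minimal nonvanishing degree of the non-equivariant $HSW^*(Y,\mathfrak{s}_0)\cong HF^+_*(Y,\mathfrak{s}_0)$, obtained from the Borel spectral sequence $E_2^{p,r}=H^p(B\mathbb{Z}_q;HSW^r(Y,\mathfrak{s}_0))\Rightarrow HSW^{p+r}_{\mathbb{Z}_q}(Y,\mathfrak{s}_0)$. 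One remark on efficiency: the filtration compatibility you flag as the ``main obstacle'' is not in fact needed. In your chain you establish $i-f\ge\ell^{(q)}(K)$ for the Borel filtration level $f$ of the witness $x$, and then invoke $f\le j$; but since $f\ge 0$ automatically, $i-f\ge\ell^{(q)}(K)$ already yields $i\ge\ell^{(q)}(K)$ and hence $d_{\mathbb{Z}_2,Q^j}=i-j\ge\ell^{(2)}(K)-j$ (and similarly $i-2j\ge\ell^{(q)}(K)-2j$ for odd $q$) with no need to bound $f$ from above by the $Q$- or $S$-adic valuation of $\iota x$. Equivalently, the paper simply observes that $E_2^{p,r}=0$ for $r<\ell^{(q)}(K)$, and since $p\ge 0$ the whole diagonal $p+r=n$ vanishes for $n<\ell^{(q)}(K)$; thus $HSW^n_{\mathbb{Z}_q}(Y,\mathfrak{s}_0)=0$ in those degrees, every witness lives in degree $\ge\ell^{(q)}(K)$, and the bound follows immediately.
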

\begin{proof}
We consider the case where $q$ is odd. The case $q=2$ is similar. Let $Y = \Sigma_q(K)$ and let $\mathfrak{s}_0$ be the distinguished spin$^c$-structure. Then $\delta^{(q)}_j(K)/2 = d_{\mathbb{Z}_q , S^j}(Y , \mathfrak{s}_0)$ is defined to be $k-2j$, where $k$ is the minimum degree of an element of the equivariant Seiberg--Witten--Floer cohomology $HSW^*_{\mathbb{Z}_q}(Y , \mathfrak{s}_0)$ satisfying the condition given in Section \ref{sec:esw}. From this definition it follows that $\delta^{(q)}_j(K) \ge 2(k_0 - 2j)$ where $k_0$ is the lowest degree in which $HSW^*_{\mathbb{Z}_q}(Y , \mathfrak{s}_0)$ is non-zero. From \cite[Theorem 3.2]{bh}, there is a spectral sequence $E_r^{p,q}$ abutting to $HSW^*_{\mathbb{Z}_q}(Y , \mathfrak{s}_0)$ which has $E_2^{p,q} = H^p( B\mathbb{Z}_q ; HSW^q(Y , \mathfrak{s}_0 ) )$. Hence $k_0$ is at least the lowest degree in which $HSW^q(Y , \mathfrak{s}_0)$ is non-zero. Now recall that the Seiberg--Witten--Floer cohomology $HSW^*(Y , \mathfrak{s}_0)$ is isomorphic to the Heegaard--Floer cohomology $HF_+^*(Y , \mathfrak{s}_0)$. Further, since we are working over a field this is also isomorphic to the Heegaard--Floer homology $HF^+_*(Y , \mathfrak{s}_0)$. So the lowest degree in which $HSW^*(Y , \mathfrak{s}_0)$ is non-zero is precisely $\ell^{(q)}(K)$. So we have
\[
\delta^{(q)}_j(K) \ge 2(k_0 - 2j ) \ge 2 \ell^{(q)}(K) - 4j.
\]
Now let $j = j^{(q)}(K,m)$, so $\delta^{(q)}_j(K) \le m -\sigma^{(q)}(K)/2$, giving 
\[
2j^{(q)}(K,m) \ge \ell^{(q)}(K) -\frac{m}{2} + \frac{\sigma^{(q)}(K)}{4}
\]
and thus
\[
2\frac{j^{(q)}(K,m)}{q-1} + \frac{\sigma^{(q)}(K)}{2} \ge  \frac{\ell^{(q)}(K)}{q-1} -\frac{m}{2(q-1)} + \frac{3 \sigma^{(q)}(K)}{4(q-1)}.
\]
Replacing $K$ by $-K$, we thus get
\[
\theta^{(q)}(K,m) \ge \frac{\ell^{(q)}(-K)}{(q-1)} - \frac{m}{2(q-1)} - \frac{3 \sigma^{(q)}(K)}{4(q-1)}.
\]
\end{proof}

\section{Examples}\label{sec:ex}

\subsection{Torus knots}

Let $a,b$ be positive coprime integers and let $T_{a,b}$ denote the $(a,b)$ torus knot.

\begin{proposition}
Let $q$ be coprime to $a$ and $b$. Then $\theta^{(q)}( -T_{a,b} ) = 0$. 
\end{proposition}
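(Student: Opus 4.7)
The plan. Unwinding the definition of $\theta^{(q)}$, the assertion $\theta^{(q)}(-T_{a,b})=0$ is equivalent to the existence of a non-negative integer $j$ for which $\xi^{(q)}_j(T_{a,b})=0$ and $4j+\sigma^{(q)}(T_{a,b})\le 0$ (with the analogous reformulation when $q=2$). Since $T_{a,b}$ is a positive knot we have $\sigma^{(q)}(T_{a,b})\le 0$, so the task is to find such a $j$ for which $\delta^{(q)}_j(T_{a,b})$ attains its lower bound $-\sigma^{(q)}(T_{a,b})/2$ coming from property~(3) of the $\delta_j^{(q)}$.

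To produce such a $j$, I would use the natural equivariant filling of $\Sigma_q(T_{a,b})$ by the Milnor fibre. Concretely, push the standard minimum-genus Seifert surface of $T_{a,b}$ into $D^4$ to obtain a properly embedded genus $g=(a-1)(b-1)/2$ surface $\Sigma$, and let $\pi\colon W\to D^4$ be the $q$-fold cyclic branched cover along $\Sigma$; this $W$ is identified with the Milnor fibre of $z_1^a+z_2^b+z_3^q$ and carries a natural $\mathbb{Z}_q$-action by deck transformations. The hypothesis that $q$ is coprime to $a$ and $b$ ensures $\partial W=\Sigma_q(T_{a,b})$ is an integer homology sphere. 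By Lemma~\ref{lem:branch}, $b_1(W)=0$, $b_2(W)=(q-1)(a-1)(b-1)$ and $\sigma(W)=\sigma^{(q)}(T_{a,b})$; by Lemma~\ref{lem:cohom}, the $\mathbb{Z}_q$-invariant part of $H^2(W;\mathbb{Z})$ is identified via pullback with $H^2(D^4;\mathbb{Z})=0$, so $H^+(W)^{\mathbb{Z}_q}=0$.

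Using Proposition~\ref{prop:spinc2} applied to the zero characteristic $c=0\in H^2(D^4;\mathbb{Z})$ (and noting that $[\Sigma]=0$ in $H_2(D^4,\partial D^4;\mathbb{Z})$), I would equip $W$ with a $\mathbb{Z}_q$-invariant spin$^c$-structure $\mathfrak{s}$ whose Chern class is rationally zero and which restricts to $\mathfrak{s}_0$ on the boundary. Then I would apply the equivariant Frøyshov inequality \cite[Theorem~5.3]{bh} to the cobordism $W$ from $S^3$ to $\Sigma_q(T_{a,b})$ (reversing orientation if sign conventions so dictate). The vanishing $H^+(W)^{\mathbb{Z}_q}=0$ is precisely the hypothesis that removes any shift in the Borel-cohomology index, so the inequality yields a bound of the form
\[
\delta^{(q)}_{j_0}(T_{a,b})\le -\tfrac{1}{2}\sigma^{(q)}(T_{a,b})
\]
for an explicit $j_0$ determined by the decomposition of $H^-(W)$ as a $\mathbb{Z}_q$-representation. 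Combined with property~(3) this forces $\xi^{(q)}_{j_0}(T_{a,b})=0$.

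The hard part will be verifying that the $j_0$ produced in this way satisfies the required inequality $4j_0+\sigma^{(q)}(T_{a,b})\le 0$. This reduces to a sufficient-negativity statement for the Milnor-fibre signature $\sigma^{(q)}(T_{a,b})$ relative to the dimension of the $\mathbb{Z}_q$-eigenspaces of $H^-(W)$, which I expect to follow from Brieskorn's classical formula together with the fact that every non-trivial $\mathbb{Z}_q$-isotypical component of $H^2(W)$ contributes to $b_-$ in a controlled way. Once this estimate is in hand, applying the definition of $\theta^{(q)}$ yields the desired conclusion $\theta^{(q)}(-T_{a,b})=0$.
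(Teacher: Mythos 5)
Your opening reformulation is correct: $\theta^{(q)}(-T_{a,b})=0$ is equivalent to finding a non-negative integer $j$ with $\xi^{(q)}_j(T_{a,b})=0$ and $4j+\sigma^{(q)}(T_{a,b})\le 0$ (for odd $q$; the $q=2$ analogue replaces $4j$ by $2j$). The genuine gap, however, is exactly where you flag it: the $j_0$ your Fr\o yshov argument produces is far too large, and the hoped-for ``sufficient-negativity'' statement is actually \emph{false}.

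Concretely, the equivariant Fr\o yshov inequality applied to the Milnor fibre $W$ (or rather $\overline{W}$, viewed as a cobordism from $\Sigma_q(T_{a,b})$ to $\emptyset$, which is what you need in order to obtain an upper bound on $\delta^{(q)}_\ast(T_{a,b})$ rather than on $\delta^{(q)}_\ast(-T_{a,b})$) yields a bound $\delta^{(q)}_{j_0}(T_{a,b})\le -\sigma^{(q)}(T_{a,b})/2$ where the parameter is forced to be $2j_0 = b_+(\overline{W}) = b_-(W)$. Plugging in Lemma~\ref{lem:branch} with $X=D^4$, $[\Sigma]=0$ and $g=g_4(T_{a,b})$, one gets $b_-(W) = (q-1)g - \sigma^{(q)}(T_{a,b})/2$, and therefore
\[
4j_0 + \sigma^{(q)}(T_{a,b}) = 2b_-(W) + \sigma^{(q)}(T_{a,b}) = 2(q-1)g,
\]
which is strictly positive whenever $T_{a,b}$ is nontrivial. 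So the required inequality fails, and what your argument actually proves is only $\theta^{(q)}(-T_{a,b}) \le g_4(T_{a,b})$, the trivial bound already contained in Theorem~\ref{thm:thetaq}(1). No refinement coming from Brieskorn's signature formula or from the eigenspace decomposition of $H^2(W)$ can help, because the identity above holds identically; the problem is structural, not an estimate you are missing.

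The paper's proof goes a completely different way. It invokes \cite[Proposition~7.2]{bh}, which is an explicit computation of the equivariant Seiberg--Witten--Floer cohomology of the Brieskorn sphere $\Sigma_q(T_{a,b})=\Sigma(a,b,q)$ showing that $\delta^{(q)}_j(T_{a,b}) = -\sigma^{(q)}(T_{a,b})/2$ for \emph{all} $j\ge 0$; this forces $j^{(q)}(T_{a,b})=0$, whence $\theta^{(q)}(-T_{a,b}) = \max\{0, \sigma^{(q)}(T_{a,b})/(2(q-1))\}=0$ since $\sigma^{(q)}(T_{a,b})\le 0$. What is essential, and what the Fr\o yshov argument cannot reach, is that $\delta^{(q)}_0(T_{a,b})$ already attains the signature lower bound of property~(3). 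A filling can only show $\delta^{(q)}_{j_0}(T_{a,b})$ attains this bound for a single $j_0$ tied to $b_+$ of the filling, and for the Milnor fibre that $j_0$ is large. To get information at $j=0$ you would need a filling with $b_+(\overline{W})=0$ (positive definite Milnor fibre), which is never the case for nontrivial torus knots.
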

\begin{proof}
We have that $\sigma^{(q)}(T_{a,b}) \le 0$ as $T_{a,b}$ is braid positive. From \cite[Proposition 7.2]{bh}, we have $\delta_j^{(q)}( T_{a,b} ) = -\sigma^{(q)}( T_{a,b})/2$ for all $j \ge 0$. So $j^{(q)}( T_{a,b} ) = 0$ and $\theta^{(q)}( -T_{a,b} ) = \max\{ 0 , \sigma^{(q)}(T_{a,b})/(2(q-1)) \} = 0$.
\end{proof}

\begin{proposition}
For each $n \ge 1$, we have $\theta( T_{3,6n - 1} )  = g_4( T_{3,6n - 1}) = 6n-2$ and $\theta(T_{3,6n+1}) = g_4(T_{3,6n+1}) = 6n$.
\end{proposition}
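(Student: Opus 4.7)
The plan is to combine the general upper bound $\theta(K) \le g_4(K)$ from Theorem \ref{thm:theta}(1) with the Heegaard--Floer lower bound of Proposition \ref{prop:est}. By Kronheimer--Mrowka's resolution of the Milnor conjecture, $g_4(T_{3,q}) = q-1$ for every $q$ coprime to $3$, which immediately supplies the upper bounds $\theta(T_{3,6n-1}) \le 6n-2$ and $\theta(T_{3,6n+1}) \le 6n$. Only the matching lower bounds remain.

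For these I would specialise Proposition \ref{prop:est} to $q = 2$ and $m = 0$, giving
\[
\theta(K) \ge \ell(-K) - \frac{3}{4}\sigma(K).
\]
A standard torus-knot signature computation (e.g., via Brieskorn's formula, or the Tristram--Levine recursion anchored by $\sigma(T_{3,5}) = \sigma(T_{3,7}) = -8$) produces $\sigma(T_{3,6n-1}) = \sigma(T_{3,6n+1}) = -8n$, so the signature contribution in both families is $-\frac{3}{4}\sigma = 6n$.

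It then suffices to verify $\ell(-\Sigma(2,3,6n-1)) = -2$ and $\ell(-\Sigma(2,3,6n+1)) = 0$, where $\ell(-K)$ denotes the lowest degree in which $HF^+(\Sigma_2(-K),\mathfrak{s}_0)$ is non-zero. The branched double covers $\Sigma(2,3,6n\pm 1)$ are Seifert fibered integer homology spheres bounding negative definite star-shaped plumbings, so the $HF^+$ of their mirrors is computable via the Ozsv\'ath--Szab\'o graded-root algorithm applied to the plumbing. Substituting the resulting values yields $\theta(T_{3,6n-1}) \ge -2 + 6n = 6n-2$ and $\theta(T_{3,6n+1}) \ge 0 + 6n = 6n$, matching the upper bounds.

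The main obstacle will be controlling these lowest Floer gradings uniformly in $n$. Each individual plumbing calculation is routine, but to cover both families one must verify that the reduced summand $HF^+_{\mathrm{red}}$ does not produce elements below the bottom of the $\mathcal{T}^+$-tower as $n$ grows, so that $\ell(-\Sigma(2,3,6n\pm 1))$ stays constant across each family. The very regular shape of the star-shaped plumbing graphs for $\Sigma(2,3,6n\pm 1)$ (two short legs together with a single leg of length growing linearly in $n$) should make this control possible by an inductive run through the graded-root recipe.
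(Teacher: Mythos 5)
Your proposal is correct in outline but takes a genuinely different route from the paper. The paper computes $\theta(T_{3,6n\pm 1})$ \emph{exactly}: it cites the explicit $\delta_j$-profiles of $-T_{3,6n\pm 1}$ from \cite[Propositions~7.3, 7.4]{bh} (these are reproduced two propositions later in the same section), reads off $j(-T_{3,6n-1}) = 2n-2$ and $j(-T_{3,6n+1}) = 2n$, and combines with $\sigma = -8n$ to get $\theta = 6n-2$ and $6n$ directly from the definition $\theta(K) = \max\{0, j(-K)-\sigma(K)/2\}$. No upper bound $\theta \le g_4$ and no reduction to the Heegaard--Floer lower bound $\ell$ are needed. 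Your approach sandwiches $\theta$ between $\ell(-K) - \tfrac{3}{4}\sigma(K)$ (via Proposition~\ref{prop:est}, $q=2$, $m=0$) and $g_4(K)$ (via Theorem~\ref{thm:theta}(1) and Kronheimer--Mrowka). That sandwich does close: the values $\ell(-\Sigma(2,3,6n-1)) = -2$ and $\ell(-\Sigma(2,3,6n+1)) = 0$ are exactly what the paper invokes later (as $\ell^{(3)}(-T_{2,6n\pm 1})$, since $\Sigma_3(T_{2,q}) = \Sigma_2(T_{3,q}) = \Sigma(2,3,q)$), again drawn from \cite[Propositions 7.3, 7.4]{bh}. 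So the two approaches both ultimately rest on the Brieskorn-sphere Floer computations from \cite{bh}; the difference is that you use only the coarse invariant $\ell$ (the bottom of $HF^+$) together with an a-priori bound on $g_4$, whereas the paper uses the finer invariant (the full $\delta_j$-sequence), which lets it evaluate $\theta$ without appealing to $g_4$ at all. Your route is perfectly adequate here because the lower bound from Proposition~\ref{prop:est} happens to be tight; in general that bound can be strict, in which case the paper's direct method would still succeed where yours would leave a gap. The one piece you leave unresolved is the uniform-in-$n$ verification of $\ell$; this is not actually an obstacle since the $HF^+$ of $\pm\Sigma(2,3,6n\pm 1)$ is computed explicitly (e.g.\ by Ozsv\'ath--Szab\'o's plumbing algorithm, or already in \cite{bh}), but you should cite or carry out that computation rather than merely sketch the plan.
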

\begin{proof}
From \cite[Proposition 7.3]{bh}, we have that $j(T_{3,6n-1}) = 2n-2$. Also $\sigma(T_{3,6n-1}) = -8n$, hence $\theta(T_{3,6n-1}) = 2n-2 + 4n = 6n-2 = g_4(T_{3,6n-1})$. Similarly from \cite[Proposition 7.4]{bh}, we have that $j(T_{3,6n+1}) = 2n$ and we also have $\sigma(T_{3,6n+1}) = -8n$. Hence $\theta(T_{3,6n+1}) = 2n+4n = 6n= g_4(T_{3,6n+1})$.
\end{proof}

\begin{proposition}
For each $n \ge 1$, we have $\theta^{(3)}(T_{2,6n-1}) = g_4(T_{2,6n-1}) = 3n-1$ and $\theta^{(3)}(T_{2,6n+1}) = g_4(T_{2,6n+1}) = 3n$.
\end{proposition}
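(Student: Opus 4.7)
The upper bound $\theta^{(3)}(K) \le g_4(K)$ is immediate from Theorem \ref{thm:thetaq}(1), so combined with $g_4(T_{2,6n-1}) = 3n-1$ and $g_4(T_{2,6n+1}) = 3n$, it remains to establish the matching lower bounds via the identity
\[
\theta^{(3)}(K) = \max\{0,\, j^{(3)}(-K) - \sigma^{(3)}(K)/4\}.
\]

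First I would compute the Levine--Tristram signature sum $\sigma^{(3)}(T_{2, 6n\pm 1})$. Recall that $\sigma_{T_{2,k}}(e^{i\theta})$ is a step function on $(0,\pi)$ with jumps of $-2$ at the angles $\theta = (2j-1)\pi/k$ for $j = 1, \ldots, (k-1)/2$, corresponding to the roots of the Alexander polynomial in the upper half-plane. For both $k = 6n-1$ and $k = 6n+1$, exactly $2n$ such jump points lie in $(0, 2\pi/3)$, so $\sigma_{T_{2,6n\pm 1}}(e^{2\pi i/3}) = -4n$ and hence $\sigma^{(3)}(T_{2,6n\pm 1}) = -8n$ by the symmetry $\sigma_K(\omega) = \sigma_K(\omega^{-1})$. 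In particular $-\sigma^{(3)}/4 = 2n$ in both cases.

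Second, I would identify $\Sigma_3(T_{2,k}) = \Sigma(2,3,k)$, which is a Brieskorn homology sphere since $\gcd(3,k)=1$, equipped with its $\mathbb{Z}_3$-action by deck transformations and its distinguished spin$^c$-structure $\mathfrak{s}_0$. Then, following the same strategy as \cite[Propositions 7.3 and 7.4]{bh} (which analyzed the same underlying manifolds $\Sigma(2,3,6n\pm 1)$ but with the different $\mathbb{Z}_2$-action arising from the double branched cover of $T_{3,6n\pm 1}$), I would compute the $\mathbb{Z}_3$-equivariant Seiberg--Witten--Floer cohomology of $-\Sigma(2,3,6n\pm 1)$ as a module over $\mathbb{F}[U,R,S]/(R^2)$. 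Locating the lowest degree along the $S^j$-filtration at which a class equal to $\tau$ modulo $S^{j+1}$ appears yields $j^{(3)}(-T_{2,6n-1}) = n-1$ and $j^{(3)}(-T_{2,6n+1}) = n$. Substituting into the formula for $\theta^{(3)}$ then gives
\[
\theta^{(3)}(T_{2,6n-1}) = (n-1) + 2n = 3n-1, \qquad \theta^{(3)}(T_{2,6n+1}) = n + 2n = 3n,
\]
as required.

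The main obstacle is the $\mathbb{Z}_3$-equivariant Floer computation in the second step. This requires controlling the $\mathbb{Z}_3$-action inherited from the deck transformation on the reducible (and irreducible) Seiberg--Witten solutions on $-\Sigma(2,3,6n\pm 1)$, and analyzing the Leray--Serre spectral sequence $E_2^{p,*} = H^p(B\mathbb{Z}_3;\, HSW^*(Y,\mathfrak{s}_0))$ converging to $HSW^*_{\mathbb{Z}_3}(Y,\mathfrak{s}_0)$. Once the module structure is pinned down, the values $n-1$ and $n$ for $j^{(3)}(-T_{2,6n\mp 1})$ are read off by identifying the bottom of each $S^j$-tower in the equivariant cohomology; the computations should mirror those already carried out in \cite{bh} for the $\mathbb{Z}_2$-action, with the different coefficient ring $\mathbb{F}[U,R,S]/(R^2)$ shifting indices accordingly.
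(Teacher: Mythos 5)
Your overall strategy (upper bound from $g_4$, lower bound from the defining formula for $\theta^{(3)}$) is sound, and your computation of $\sigma^{(3)}(T_{2,6n\pm 1}) = -8n$ is correct. However, the crux of your argument --- the claimed values $j^{(3)}(-T_{2,6n-1}) = n-1$ and $j^{(3)}(-T_{2,6n+1}) = n$ --- is asserted rather than proved, and you yourself identify the $\mathbb{Z}_3$-equivariant Floer computation as ``the main obstacle.'' This is a genuine gap, not a detail: the computations in \cite[Propositions 7.3, 7.4]{bh} that you propose to ``mirror'' concern the $\mathbb{Z}_2$-action on $\Sigma(2,3,6n\pm1)$ arising as the deck transformation of $\Sigma_2(T_{3,6n\pm1})$, whereas you need the $\mathbb{Z}_3$-action arising from $\Sigma_3(T_{2,6n\pm1})$. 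Although the underlying oriented $3$-manifolds coincide, the two group actions are entirely different (different groups, different fixed loci, different representations at the fixed circles), so there is no straightforward transfer of the module structure over $\mathbb{F}[U,Q]$ to a module structure over $\mathbb{F}[U,R,S]/(R^2)$, and the values of $j^{(3)}$ do not follow without carrying out a new equivariant computation from scratch.

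The paper sidesteps the equivariant computation entirely by invoking Proposition \ref{prop:est}, which bounds $\theta^{(q)}(K)$ from below in terms of the purely non-equivariant quantity $\ell^{(q)}(-K)$ (the bottom nonvanishing degree of $HF^+(\Sigma_q(-K),\mathfrak{s}_0)$). Precisely because $\Sigma_3(\pm T_{2,6n\pm1}) = \pm\Sigma(2,3,6n\pm1) = \Sigma_2(\pm T_{3,6n\pm1})$ as an oriented manifold, one reads off $\ell^{(3)}(-T_{2,6n-1}) = -2$ and $\ell^{(3)}(-T_{2,6n+1}) = 0$ from the non-equivariant content of \cite[Propositions 7.3, 7.4]{bh} with no new computation. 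Plugging these together with $\sigma^{(3)} = -8n$ into Proposition \ref{prop:est} gives $\theta^{(3)}(T_{2,6n-1}) \ge 3n-1$ and $\theta^{(3)}(T_{2,6n+1}) \ge 3n$, matching $g_4$. You should replace your proposed equivariant computation of $j^{(3)}$ with this application of Proposition \ref{prop:est}; as written, your proof is incomplete at its key step.
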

\begin{proof}
From \cite[Proposition 7.2]{bh}, we have that $\sigma^{(3)}(T_{2,6n\pm 1}) = \sigma^{(2)}(T_{3,6n \pm 1}) = \sigma(T_{3,6n \pm 1})$, which equals $-8n$. Also from \cite[Proposition 7.3, 7.4]{bh}, we find that $\ell^{(3)}(-T_{2,6n-1}) = -2$ and $\ell^{(3)}(-T_{2,6n+1}) = 0$. Putting this into Proposition \ref{prop:est} gives $\theta^{(3)}(T_{2,6n-1}) \ge 3n-1$ and $\theta^{(3)}(T_{2,6n+1}) \ge 3n$. But we also find that $g_4(T_{2,6n-1}) = 3n-1$, $g_4(T_{2,6n+1}) = 3n$, so we must have equalities $\theta^{(3)}(T_{2,6n-1}) = 3n-1$, $\theta^{(3)}(T_{2,6n+1}) = 3n$.
\end{proof}

In light of the above propositions we suspect that $\theta^{(q)}( T_{a,b} ) = g_4( T_{a,b})$ for all primes $q$ and all positive coprime integers $a,b$.

\begin{proposition}
For each $n\ge 1$ and $m \ge 0$, we have $\theta( T_{3,6n-1} , m ) = \max \{ 4n , 6n-2 - 2\lfloor \frac{m}{4} \rfloor \}$ and $\theta( T_{3,6n+1} , m ) = \max \{ 4n , 6n - 2\lfloor \frac{m}{4} \rfloor \}$.
\end{proposition}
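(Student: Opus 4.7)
The plan is to reduce $\theta(T_{3,6n\pm 1}, m)$ to data extracted from the $\mathbb{Z}_2$-equivariant Seiberg--Witten--Floer cohomology of the Brieskorn spheres $-\Sigma(2,3,6n\pm 1)$. First I would unpack the definition of $j^{(2)}(-K,m)$: since $\xi_j(L) = \tfrac14 \delta_j(L) + \tfrac18 \sigma(L)$ is integer-valued, the inequality $\delta_j(-K) \le m - \sigma(-K)/2$ is equivalent to $\xi_j(-K) \le \lfloor m/4 \rfloor$, and so
\[
j^{(2)}(-K, m) = \min\{\,j \ge 0 : \xi_j(-K) \le \lfloor m/4 \rfloor\,\}.
\]
For $K = T_{3,6n\pm 1}$ one has $\sigma(K) = -8n$, so $-\sigma(K)/2 = 4n \ge 0$ and the outer $\max$ with $0$ in the definition of $\theta(K,m)$ is redundant: $\theta(K,m) = j^{(2)}(-K,m) + 4n$.

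Next I would harvest the full sequence $\{\xi_j(-T_{3,6n\pm 1})\}_{j \ge 0}$ from (the proofs of) \cite[Propositions 7.3 and 7.4]{bh}. Those propositions only record the smallest $j$ at which $\xi_j$ first hits zero, but the underlying calculation produces the whole $\mathbb{F}[U,Q]$-module structure of $HSW^*_{\mathbb{Z}_2}(-\Sigma(2,3,6n\pm 1), \mathfrak{s}_0)$, from which one can read off the staircases
\[
\xi_j(-T_{3, 6n-1}) = \max\{\,0,\ n-1-\lfloor j/2 \rfloor\,\}, \qquad \xi_j(-T_{3, 6n+1}) = \max\{\,0,\ n-\lfloor j/2 \rfloor\,\},
\]
reflecting the well-known shape of $HF^+$ of the Brieskorn spheres $\Sigma(2,3,6n\pm 1)$.

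Given these sequences, the rest is bookkeeping. For $K = T_{3,6n-1}$, the smallest $j$ with $\xi_j(-K) \le \lfloor m/4\rfloor$ equals $0$ when $\lfloor m/4\rfloor \ge n-1$ and equals $2(n-1-\lfloor m/4\rfloor) = 2n-2-2\lfloor m/4\rfloor$ otherwise, so
\[
\theta(T_{3,6n-1}, m) = 4n + \max\{\,0,\ 2n-2 - 2\lfloor m/4\rfloor\,\} = \max\{\,4n,\ 6n-2 - 2\lfloor m/4\rfloor\,\}.
\]
The identical argument applied to $T_{3,6n+1}$, whose $\xi$-sequence is shifted one step up, yields $\theta(T_{3,6n+1}, m) = \max\{4n,\ 6n - 2\lfloor m/4\rfloor\}$.

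The main obstacle is producing the full staircase rather than merely the location of its first zero. The statements of Propositions 7.3--7.4 in \cite{bh} only announce the first zero, so one must revisit their proofs and extract each graded piece of $HSW^*_{\mathbb{Z}_2}(-\Sigma(2,3,6n\pm 1), \mathfrak{s}_0)$ modulo $Q^{j+1}$; equivalently, one uses the known structure of $HF^+(\Sigma(2,3,6n\pm 1))$ as a graded $\mathbb{F}[U]$-module, together with the $\mathbb{Z}_2$-equivariant refinement of \cite{bh}. Once the staircase is in hand, the floor-function arithmetic above finishes the proof.
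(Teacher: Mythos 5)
Your argument is correct and takes essentially the same route as the paper: both reduce the claim to computing $j^{(2)}(-T_{3,6n\pm 1},m)$ from the full sequences $\{\delta_j(-T_{3,6n\pm 1})\}_{j\ge 0}$ supplied by \cite[Propositions 7.3, 7.4]{bh}, and your $\xi_j$-staircases are exactly the stated $\delta_j$-sequences rewritten via $\xi_j = \tfrac14\delta_j + \tfrac18\sigma$. One small note: your worry that those propositions only record the first zero is unfounded — the paper's proof quotes the full $\delta_j$-sequences directly from them, which is precisely the data your staircase formulas encode.
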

\begin{proof}
Recall that $\sigma( T_{3,6n\pm 1} )  = -8n$. Then $\theta( T_{3,6n \pm 1} , m ) = j( -T_{3,6n\pm 1} , m) + 4n$. From \cite[Proposition 7.3]{bh}, we have that 
\[
\delta_j( -T_{3,6n-1}  ) = \begin{cases} -4\left( \left\lfloor \frac{j}{2} \right\rfloor + 1\right) & 0 \le j \le 2n-3, \\ -4n & j \ge 2n-2. \end{cases}
\]
It follows that $j(-T_{3,6n-1} , m) = \max\{ 0 , 2n-2 - 2 \lfloor \frac{m}{4} \rfloor \}$ and thus $\theta( T_{3,6n-1} , m ) = \max \{ 4n , 6n-2 - 2\lfloor \frac{m}{4} \rfloor \}$. The case of $T_{3,6n+1}$ is similar. From \cite[Proposition 7.4]{bh}, we have that
\[
\delta_j( -T_{3,6n+1}  ) = \begin{cases} -4 \left\lfloor \frac{j}{2} \right\rfloor & 0 \le j \le 2n-1, \\ -4n & j \ge 2n. \end{cases}
\]
Hence $j(-T_{3,6n+1} , m) = \max\{ 0 , 2n - 2\lfloor \frac{m}{4} \rfloor \}$ and thus $\theta( T_{3,6n+1} , m ) = \max \{ 4n , 6n - 2\lfloor \frac{m}{4} \rfloor \}$.
\end{proof}

\subsection{Examples involving Whitehead doubles}

For any knot $K$, let $Wh(K)$ denote the untwisted Whitehead double of $K$ with a positive clasp. Whitehead doubles are known to have trivial Alexander polynomial and are therefore topologically slice \cite{fq}. This also implies that the Levine--Tristram signature is identically zero. A Whitehead double can be unknotted by a single crossing change, so they have $g_4(Wh(K)) \le 1$ and $u(Wh(K)) \le 1$.

\begin{proposition}
If $\tau(K) > 0$, then $\theta( Wh(K) ) = 1$, $\theta(-Wh(K)) = 0$.
\end{proposition}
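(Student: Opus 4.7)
The plan is to combine the a priori bounds of Theorem \ref{thm:theta} and Corollary \ref{cor:unk} with one additional analytic input, namely that the Manolescu--Owens invariant satisfies $\delta(Wh(K)) < 0$ under the hypothesis $\tau(K) > 0$.

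First I would record the standard facts about $Wh(K)$. Because the Whitehead double has trivial Alexander polynomial, its full Levine--Tristram signature vanishes, so in particular $\sigma(Wh(K)) = 0$; and because a single crossing change unknots $Wh(K)$, one has $g_4(Wh(K)) \le u(Wh(K)) \le 1$. Applying Theorem \ref{thm:theta}(1) to both $Wh(K)$ and $-Wh(K)$ gives $0 \le \theta(\pm Wh(K)) \le 1$, while Corollary \ref{cor:unk} yields
\[
\theta(Wh(K)) + \theta(-Wh(K)) \le u(Wh(K)) \le 1.
\]
Therefore it suffices to prove the single inequality $\theta(Wh(K)) \ge 1$, since this then forces $\theta(-Wh(K)) = 0$ automatically.

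For this lower bound I would apply Theorem \ref{thm:theta}(5) to $Wh(K)$. Since $\sigma(Wh(K)) = 0$, the hypothesis of that property reduces to the strict inequality $\delta(Wh(K)) < 0$, after which the conclusion $\theta(Wh(K)) \ge 1 - \sigma(Wh(K))/2 = 1$ is immediate. The main obstacle is establishing precisely this input: that the Manolescu--Owens invariant of the positive-clasp untwisted Whitehead double is negative whenever $\tau(K)>0$. I would verify this by invoking the known computation of the Ozsv\'ath--Szab\'o correction term $d(\Sigma_2(Wh(K)),\mathfrak{s}_0)$, either via a surgery description of $\Sigma_2(Wh(K))$ and a direct Heegaard Floer argument, or by citing the existing literature relating $d(\Sigma_2(Wh(K)),\mathfrak{s}_0)$ to the sign of $\tau(K)$; Hedden's result $\tau(Wh(K)) = 1$ is the key bridge between the hypothesis on $K$ and the Floer-theoretic computation.

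Putting these pieces together, the chain is: $\tau(K) > 0 \Rightarrow \delta(Wh(K)) < 0 \Rightarrow \theta(Wh(K)) \ge 1$ by Theorem \ref{thm:theta}(5), which combined with $\theta(Wh(K)) \le g_4(Wh(K)) \le 1$ gives $\theta(Wh(K)) = 1$, and then Corollary \ref{cor:unk} forces $\theta(-Wh(K)) = 0$.
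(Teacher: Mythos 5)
Your proof follows essentially the same approach as the paper: use Corollary \ref{cor:unk} to bound $\theta(Wh(K)) + \theta(-Wh(K)) \le 1$, then get the lower bound $\theta(Wh(K)) \ge 1$ from Theorem \ref{thm:theta}(5) via the fact that $\delta(Wh(K)) < 0$ when $\tau(K) > 0$. The one imprecision is in how you would establish $\delta(Wh(K)) < 0$: the paper simply cites Manolescu--Owens \cite[Theorem 1.5]{mo}, which directly gives this implication from $\tau(K) > 0$; your suggestion that Hedden's computation $\tau(Wh(K)) = 1$ is the ``key bridge'' is not quite what is needed here, since that result is about the $\tau$-invariant of the double, not the correction term $\delta$ of its branched double cover.
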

\begin{proof}
By \cite[Theorem 1.5]{mo}, we have that $\delta( Wh(K) ) < 0$ whenever $\tau(K) > 0$. Then since $\sigma(Wh(K)) = 0$, Theorem \ref{thm:theta} (5) gives $\theta( Wh(K) ) \ge 1$. On the other hand, Corollary \ref{cor:unk} gives $\theta(Wh(K)) + \theta( -Wh(K)) \le u( Wh(K) )  \le 1$. Hence $\theta(Wh(K)) = 1$ and $ \theta(-Wh(K)) = 0$.
\end{proof}

Note that from \cite[Theorem 1.4]{hed}, we have that $\tau( Wh(K) ) = 1$ whenever $\tau(K) > 0$. Hence for such knots we also have $\nu^+(Wh(K)) = 1$, $\nu^+( -Wh(K)) = 0$.

\begin{example}
The Whitehead double $Wh(T_{2,2a+1})$ of the torus knot $T_{2,2a+1}$ has $\tau(Wh(T_{2,2a+1}) ) = 1$, $s( Wh(T_{2,2a+1})) = 2$ and $\delta( Wh(T_{2,2a+1}) ) = -4a$ \cite{mo}. Consider the knot $K = Wh(T_{2,2a+1}) \# (-Wh(T_{2,2b+1}))$. Then $\tau(K) = s(K) = 0$ and the Levine--Tristram signature of $K$ is identically zero. However $\delta(K) = 4(b-a)$. So if $a>b$ then $\theta(K) \ge 1$. On the other hand $\theta(K) \le \theta( Wh(T_{2,2a+1})) + \theta( -Wh(T_{2,2b+1})) = 1$, so we have $\theta(K) = 1$.
\end{example}

\begin{example}
Let $K = -9_{42} \# Wh(T_{2,2a+1})$ where $a$ is any positive integer. Then $\tau(K) = 1$, $s(K) = 2$, $\sigma(K) = -2$ and $\delta(K) = 1-4a < -\sigma(K)/2$. Therefore $\theta(K) \ge 2$. But $9_{42}$ and $Wh(T_{2,2a+1})$ both have slice genus $1$ and unknotting number $1$, hence $g_4(K), u(K) \le 2$. It follows that $\theta(K) = g_4(K) = u(K) = 2$. On the other hand $\tau$, $s$ and $\sigma$ (or more generally the Levine--Tristram signature) of $K$ only give the bounds $g_4(K) \ge 1$ and $u(K) \ge 1$. We also have $\nu^+(K) \le \nu^+(-9_{42}) + \nu^+( Wh(T_{2,2a+1}) ) = 1$ and $\nu^+(-K) \le \nu+(9_{42}) + \nu^+( -Wh(T_{2,2a+1}) ) = 0$, so $\theta(K)$ also gives a better genus bound than $\nu^+(K)$ and $\nu^+(-K)$.
\end{example}

\begin{example}
Consider a connected sum such as 
\[
K = -Wh(T_{2,2a+1}) \# Wh(T_{2,2b+1}) \# Wh(T_{2,2c+1})
\]
with $a > b+c$. Then $\tau(K) = 1$, $\sigma(K) = 0$ and $\delta(K) = 4(a-b-c) > 0$, hence $\theta(-K) \ge 1$. Also $\theta(-K) \le \theta( Wh(T_{2,2a+1}) ) + \theta( -Wh(T_{2,2b+1})) + \theta(-Wh(T_{2,2c+1})) = 1$, hence $\theta(-K) = 1$. Since $\tau(K) > 0$, we see that $K$ is not smoothly $H$-slice in any negative definite $4$-manifold (with zero integral first homology) and since $\theta(-K) > 0$, we see that $K$ is not smoothly $H$-slice in any positive definite $4$-manifold either. Moreover, $K$ is topologically slice since it is a connected sum of Whitehead doubles.
\end{example}

\begin{example}
Most of the examples we have seen have either $\theta(K) = 0$ or $\theta(-K) = 0$. However it is certainly possible to have $\theta(K)$ and $\theta(-K)$ both non-zero. Consider for example $K = T_{2,5} \# (-Wh(T_{2,3}))$. Then $T_{2,5}$ is concordant to $K \# Wh(T_{2,3})$, hence
\[
2 = \theta(T_{2,5}) \le \theta(K) + \theta(Wh(T_{2,3})) = \theta(K) + 1
\]
which gives $\theta(K) \ge 1$. On the other hand $\sigma(K) = -4$ and $\delta(K) = 6 > -\sigma(K)/2$, so $\theta(-K) \ge 1$. Furthermore, $Wh(T_{2,3})$ can be unknotted by changing a positive crossing in the clasp to a negative crossing. So $T_{2,3}$ is obtained from $-K$ by changing a positive crossing to negative. Hence
\[
0 \le \theta( -K ) - \theta(T_{2,5}) \le 1,
\]
hence $\theta(-K) \ge 2$. Further, as $u(T_{2,5}) = 2$ and $u( Wh(T_{2,3})) = 1$, we get $\theta(K) + \theta(-K) \le u(K) \le 3$. So $\theta(K) = 1$, $\theta(-K) = 2$ and $u(K) = 3$.
\end{example}

\section{Genus bounds for definite $4$-manifolds}\label{sec:definite}

Let $X$ be a compact, oriented, smooth $4$-manifold with boundary $\partial X = S^3$. Note that $H^2(X , \partial X ; \mathbb{Z})$ and $H^2(X ; \mathbb{Z})$ are canonically isomorphic.

Let $K \subset S^3$ be a knot in $S^3$. Consider a smooth, oriented, properly embedded surface $\Sigma \to X$ bounding $K$. The surface $\Sigma$ has a relative homology class in $H_2(X , \partial X ; \mathbb{Z}) \cong H_2(X ; \mathbb{Z})$. Given a knot $K$ and a homology class $a \in H_2(X ; \mathbb{Z})$, we say that an embedded surface $\Sigma$ of the above type {\em represents} the pair $(K,a)$ if $\partial \Sigma = K$ and $[\Sigma]=a$. Define the {\em slice genus of $K$ with respect to $(X,a)$}, $g_4(K,X,a)$ to be the minimum genus of an embedded surface in $X$ which represents the pair $(K,a)$. Define the {\em $H$-slice genus of $K$ in $X$}, $g_4(K,X)$ to be the minimum genus of a smooth, oriented, properly embedded surface $\Sigma$ in $X$ bounding $K$ and such that $\Sigma$ is homologically trivial. Clearly $g_4(K,X,a)$ and $g_H(K,X)$ depend only on the isotopy class of $K$.

Assume that $H_1(X ; \mathbb{Z}) = 0$. Let $\Sigma \subseteq X$ be a properly embedded surface in $X$ bounding a knot $K$ and suppose further that the homology class $a = [\Sigma] \in H_2(X , \partial X ; \mathbb{Z})$ is divisible by a prime number $q$. Then we may construct the degree $q$ cyclic branched cover $\pi : W \to X$ branched over $\Sigma$.

Assume now that $X$ is negative definite, that is, the intersection form on $H^2(X ; \mathbb{Z})$ is negative definite. Then from Lemma \ref{lem:branch}, we have:

\begin{align*}
b_2(W) &=  q b_2(X) + (q-1)(2g), \\
\sigma(W) &= -q b_2(X) - \frac{(q^2-1)}{3q} [\Sigma]^2 + \sigma^{(q)}(K), \\
b_+(W) &= (q-1)g - \frac{(q^2-1)}{6q}[\Sigma]^2 + \frac{\sigma^{(q)}(K)}{2} , \\
\end{align*}
where $g$ is the genus of $\Sigma$.

\begin{theorem}\label{thm:in1}
Let $q$ be an odd prime and let $a \in H_2(X ; \mathbb{Z})$ be divisible by $q$. Then
\[
\delta_{j}^{(q)}(-K) \le -\frac{(q^2-1)}{6q} a^2 + \frac{\sigma^{(q)}(K)}{2}
\]
where
\[
2j = (q-1)g_4(K,X,a) - \frac{(q^2-1)}{6q}a^2 + \frac{\sigma^{(q)}(K)}{2}.
\]
Thus, setting $m =  -\frac{(q^2-1)}{6q}a^2$ we have that
\[
\frac{2 j^{(q)}(-K,m)}{q-1} \le g_4(K,X,a) - \frac{q+1}{6q}a^2 + \frac{\sigma^{(q)}(K)}{2(q-1)}
\]
and hence
\[
g_4(K,X,a) \ge \theta^{(q)}\left(K, m \right) + \frac{(q+1)}{6q}a^2.
\]
\end{theorem}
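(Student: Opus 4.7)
The plan is to apply the equivariant Froyshov inequality of \cite[Theorem 5.3]{bh} to the degree $q$ branched cover of $X$ over a minimum-genus representative of $(K,a)$, with the spin$^c$-structure coming from Proposition \ref{prop:spinc2} for an optimally chosen characteristic on $X$.

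I would start by fixing a smooth, oriented, properly embedded surface $\Sigma \subset X$ with $\partial\Sigma = K$, $[\Sigma]=a$, and $g=g_4(K,X,a)$, and forming the branched cover $\pi:W\to X$ of Section \ref{sec:branch}. Since $X$ is negative definite, $\sigma(X)=-b_2(X)$, and Lemma \ref{lem:branch} (plus $b_1(W)=0$) yields exactly the topological invariants recorded in the displayed formulas above the theorem; in particular $b_+(W)=2j$ with the $j$ appearing in the theorem. The intersection form of $X$ is unimodular and negative definite, so by Elkies' theorem (applied to the conjugate positive-definite lattice) there exists a characteristic $c\in H^2(X;\mathbb{Z})$ with $-c^2\le b_2(X)$. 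Proposition \ref{prop:spinc2} (odd-$q$ case) then furnishes a $\sigma$-invariant spin$^c$-structure $\mathfrak{s}$ on $W$ restricting to $\mathfrak{s}_0$ on $\partial W = \Sigma_q(K)$ and satisfying $c_1(\mathfrak{s})=\pi^*(c)$ in $H^2(W;\mathbb{Q})$, whence $c_1(\mathfrak{s})^2=qc^2\ge -qb_2(X)$.

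The central step is to regard $W$ as a $\mathbb{Z}_q$-equivariant spin$^c$ cobordism from $-\Sigma_q(K)=\Sigma_q(-K)$ (ingoing) to the empty outgoing boundary, and invoke the equivariant Froyshov inequality. The degree shift on Floer cohomology contributed by $b_+(W)$ translates, since $S$ has degree $2$, into an index shift of $b_+(W)/2=j$ on the $\delta^{(q)}$-invariant, giving
\[
\delta^{(q)}_{j}(-K)\;\le\;\tfrac{\sigma(W)-c_1(\mathfrak{s})^2}{2}\;\le\;\tfrac{\sigma(W)+qb_2(X)}{2}\;=\;-\tfrac{q^2-1}{6q}a^2+\tfrac{\sigma^{(q)}(K)}{2},
\]
which is the first claim. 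The second claim is then a direct rewriting: setting $m=-(q^2-1)a^2/(6q)$, the inequality above says $\delta^{(q)}_j(-K)\le m+\sigma^{(q)}(K)/2 = m-\sigma^{(q)}(-K)/2$, hence $j^{(q)}(-K,m)\le j=b_+(W)/2$; dividing by $(q-1)/2$ and substituting the explicit value of $b_+(W)$ with $g=g_4(K,X,a)$ produces the bound on $2j^{(q)}(-K,m)/(q-1)$, and the $H$-slice genus inequality then follows from the definition of $\theta^{(q)}(K,m)$, with the trivial $\theta^{(q)}(K,m)=0$ case being automatic because $(q+1)a^2/(6q)\le 0$.

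The main obstacle will be using the equivariant Froyshov inequality with exactly the right index shift in the odd-$q$ cobordism setting; getting this precise is the subtlety that distinguishes the odd-$q$ version from the $q=2$ crossing-change arguments of Proposition \ref{prop:xi}. The lattice-theoretic step (producing a characteristic with $-c^2\le b_2(X)$) is classical, and the topological/spin$^c$ inputs are all supplied by Lemma \ref{lem:branch} and Proposition \ref{prop:spinc2}.
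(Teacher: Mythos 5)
Your proposal follows essentially the same route as the paper: branched cover over a minimal-genus representative, Proposition \ref{prop:spinc2} to furnish an invariant spin$^c$-structure with $c_1 = \pi^*(c)$, then the equivariant Froyshov inequality with $2j = b_+(W)$, followed by the algebraic rewriting. The one genuine difference is the lattice-theoretic step: you invoke Elkies' theorem to produce a characteristic with $-c^2 \le b_2(X)$, whereas the paper caps off the $S^3$ boundary with a ball, applies Donaldson's diagonalization theorem to conclude the form is $diag(-1,\dots,-1)$, and takes $c = (1,\dots,1)$ with $c^2 = -b_2(X)$ exactly. Both yield the same bound (only the inequality $c^2 \ge -b_2(X)$ is used), and both are legitimate here since $X$ is smooth; Elkies is a purely lattice-theoretic substitute for Donaldson in this step, while Donaldson gives you an explicit characteristic. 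One small omission worth flagging: the equivariant Froyshov inequality requires $H^+(W)^G = 0$, which the paper checks via $H^+(W)^G \cong H^+(X) = 0$ (negative definiteness of $X$); you should include that verification, since it is a hypothesis of the theorem you are invoking, not just background.
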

\begin{proof}
Choose an embedded surface $\Sigma$ of genus $g = g_4(K,X,a)$ representing $(K,a)$. Consider the cyclic $q$-fold branched cover $\pi : W \to X$ with branch locus $\Sigma$. This carries and action of $G = \mathbb{Z}_p$ generated by the covering transformation $\sigma$. The boundary of $W$ is $Y = \Sigma_q(K)$. Let $\mathfrak{s}$ be an invariant spin$^c$-structure on $W$. By Proposition \ref{prop:spinc2} we can choose $\mathfrak{s}$ so that $\widetilde{c} = c_1(\mathfrak{s}) = \pi^*(c)$ where $c$ is a characteristic on $X$ and such that $\mathfrak{s}|_{Y} = \mathfrak{s}_0$ is the distinguished spin$^c$-structure on $Y$. Since $X$ is negative definite, it has intersection form $diag(-1,-1, \dots , -1)$ (cap the boundary of $X$ off with a disc and apply Donaldson's theorem). Choose $c = (1,1, \dots ,1)$, so that $c^2 = -b_2(X)$.

Consider $W$ as a cobordism with ingoing boundary $\overline{Y} = \Sigma_q(\overline{K})$ and with empty outgoing boundary. We have that $H^+(W)^{G} \cong H^+(X) = 0$, since $X$ is negative definite. Thus the equivariant Froyshov inequality \cite[Theorem 5.3]{bh} gives:
\[
\delta( W , \mathfrak{s} ) + \delta_{\mathbb{Z}_q , S^j}(\overline{Y} , \mathfrak{s}_0) \le 0
\]
where
\[
2j = b_+(W) = (q-1)g_4(K,X,a) - \frac{(q^2-1)}{6q}[\Sigma]^2 + \frac{\sigma^{(q)}(K)}{2}.
\]
Re-arranging and using $\delta_j^{(q)}(-K) = 4\delta_{\mathbb{Z}_q , S^j}(\overline{Y} , \mathfrak{s}_0  )$, we have 
\begin{align*}
\delta_j^{(q)}(-K) \le -4\delta(W , \mathfrak{s})  &= \frac{ \sigma(W) - c_1(\mathfrak{s})^2}{2} \\
&= \frac{q \sigma(X)}{2} - \frac{(q^2-1)}{6q} [\Sigma]^2 + \frac{\sigma^{(q)}(K)}{2} + \frac{q b_2(X)}{2} \\
&=  - \frac{(q^2-1)}{6q} [\Sigma]^2 + \frac{\sigma^{(q)}(K)}{2}.
\end{align*}
This gives the result.
\end{proof}

Setting $a=0$, we have:
\begin{corollary}\label{cor:hodd}
For every odd prime $q$, we have
\[
g_H(K,X) \ge \theta^{(q)}(K).
\]
\end{corollary}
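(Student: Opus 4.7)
The plan is to specialize Theorem \ref{thm:in1} to the case $a = 0$, since $0$ is trivially divisible by the prime $q$. With $a = 0$, the auxiliary parameter $m = -\frac{(q^2-1)}{6q}a^2$ vanishes, so the conclusion of the theorem reduces to
\[
g_4(K,X,0) \ge \theta^{(q)}(K,0).
\]
Thus the corollary will follow once I identify the two sides with the quantities in the statement.

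First I would check that $g_4(K,X,0) = g_H(K,X)$. This is immediate from the definitions: $g_4(K,X,0)$ is the minimum genus over properly embedded surfaces $\Sigma \subset X$ with $\partial\Sigma = K$ and $[\Sigma] = 0 \in H_2(X,\partial X;\mathbb{Z})$, which is precisely the $H$-slice genus $g_H(K,X)$. Second, I would verify that $\theta^{(q)}(K,0) = \theta^{(q)}(K)$. Unwinding the definitions, $j^{(q)}(K,0)$ is the smallest $j$ such that $\delta_j^{(q)}(K) \le -\sigma^{(q)}(K)/2$; by property (3) of the $\delta_j^{(q)}$-invariants, this lower bound is the minimum of $\delta_j^{(q)}(K)$, so $j^{(q)}(K,0)$ is the smallest $j$ with $\xi_j^{(q)}(K) = 0$, which is exactly the quantity $j^{(q)}(K)$ used in the definition of $\theta^{(q)}(K)$. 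Plugging this into the formula for $\theta^{(q)}(K,m)$ and comparing with that for $\theta^{(q)}(K)$ gives the claimed equality.

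There is no genuine obstacle here: all the analytic input (the equivariant Froyshov inequality, Proposition \ref{prop:spinc2}, and the Betti number and signature computations from Lemma \ref{lem:branch}) has already been absorbed into Theorem \ref{thm:in1}. The only thing to check is the bookkeeping that the $a = 0$, $m = 0$ specialisation of the invariant $\theta^{(q)}(K,m)$ really does coincide with $\theta^{(q)}(K)$, which is a matter of matching definitions rather than a substantive argument.
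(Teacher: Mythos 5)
Your proposal is correct and matches the paper's proof, which likewise just specialises Theorem \ref{thm:in1} to $a=0$. The bookkeeping you supply (that $j^{(q)}(K,0)$ agrees with $j^{(q)}(K)$ because $\delta_j^{(q)}(K) \ge -\sigma^{(q)}(K)/2$ always holds, and that $g_4(K,X,0)$ is by definition $g_H(K,X)$) is exactly the right check and is left implicit in the paper.
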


Now consider the case $q=2$. For any $x$, let $\eta(x) = \min_c \{ -(x+c)^2-b_2(X)\}$ be the minimum value of $-(x+c)^2-b_2(X)$ amongst all characteristics of $H^2(X ; \mathbb{Z})$. Since $X$ is negative definite, it has intersection form $diag(-1,-1, \dots , -1)$. Choosing a characteristic such that each entry of $x+c$ is $0$ or $1$, we see that $-b_2(X) \le \eta(x) \le 0$. Then we have

\begin{theorem}\label{thm:in2}
Let $a \in H^2(X ; \mathbb{Z})$ be divisible by $2$. Then for every characteristic $c \in H^2(X ; \mathbb{Z})$, we have
\[
\delta_{j}^{(2)}(-K) \le -\frac{a^2}{4} + \eta(a/2) + \frac{\sigma(K)}{2},
\]
where
\[
j = g_4(K,X,a) - \frac{a^2}{4} + \frac{\sigma(K)}{2}.
\]
Thus
\[
g_4(K,X,a) \ge j^{(2)}\left(K, m \right) + \frac{a^2}{4} - \frac{\sigma(K)}{2}
\]
where $m = -\frac{a^2}{4} + \eta(a/2)$. Equivalently,
\[
g_4(K,X,a) \ge \theta(K , m ) + \frac{1}{4}a^2.
\]
\end{theorem}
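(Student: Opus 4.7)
The plan is to follow the structure of the proof of Theorem \ref{thm:in1}, with the one genuine difference being that in the $q=2$ case of Proposition \ref{prop:spinc2} the first Chern class carries an extra $[\Sigma]/2$ term, so we cannot simply take $c_1(\mathfrak{s})^2 = -b_2(X)$ as in the odd case. Instead, I would optimize the Froyshov bound over all characteristics $c$, and this is exactly what produces the $\eta(a/2)$ correction.

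First, pick an embedded surface $\Sigma \subset X$ of genus $g = g_4(K,X,a)$ representing $(K,a)$, and form the double branched cover $\pi : W \to X$ with branch locus $\Sigma$. Its boundary is $\Sigma_2(K)$, and Lemma \ref{lem:branch} combined with $\sigma(X) = -b_2(X)$ gives $\sigma(W) = -2b_2(X) - a^2/2 + \sigma(K)$ and $b_+(W) = g - a^2/4 + \sigma(K)/2$; in particular $b_+(W)$ is the integer $j$ appearing in the statement. For any characteristic $c \in H^2(X ; \mathbb{Z})$, Proposition \ref{prop:spinc2} yields a $\sigma$-invariant spin$^c$-structure $\mathfrak{s}_c$ on $W$ with $c_1(\mathfrak{s}_c) = \pi^*(c + a/2)$ in $H^2(W ; \mathbb{Q})$ which restricts to $\mathfrak{s}_0$ on each component of $\partial W$; since $\pi$ has degree $2$ this gives $c_1(\mathfrak{s}_c)^2 = 2(c + a/2)^2$.

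Viewing $W$ as a cobordism from $\overline{Y} = \Sigma_2(-K)$ to the empty set, negative definiteness of $X$ yields $H^+(W)^{\mathbb{Z}_2} \cong H^+(X) = 0$, so the equivariant Froyshov inequality \cite[Theorem 5.3]{bh} applies to $(W,\mathfrak{s}_c)$ and, exactly as in the proof of Theorem \ref{thm:in1}, gives
\[
\delta_j^{(2)}(-K) \le \tfrac{1}{2}\bigl( \sigma(W) - c_1(\mathfrak{s}_c)^2 \bigr) = -\tfrac{1}{4}a^2 + \tfrac{1}{2}\sigma(K) - (c+a/2)^2 - b_2(X).
\]
Since this holds for every characteristic $c$, taking the infimum of the right hand side over characteristics introduces $\eta(a/2)$ by the very definition of $\eta$, establishing the first displayed inequality.

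The translation into a bound on $j^{(2)}(-K,m)$ with $m = -a^2/4 + \eta(a/2)$ is then a direct unwinding of the definition of $j^{(2)}(K,m)$, and the final inequality in terms of $\theta(K,m)$ follows by adding $\sigma(K)/2$, dividing by $q-1 = 1$, and taking the maximum with zero, in line with the definition of $\theta^{(2)}(K,m)$. The main obstacle is bookkeeping rather than conceptual: carefully tracking the $[\Sigma]/2$ correction through Proposition \ref{prop:spinc2}, computing $c_1(\mathfrak{s}_c)^2$ in $H^2(W;\mathbb{Q})$ via the degree-$2$ push--pull relation, and recognising the resulting optimum over characteristics as $\eta(a/2)$.
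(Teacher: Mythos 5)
Your proof is correct and follows essentially the same route as the paper: form the double branched cover $W\to X$ over a minimal-genus surface, use Lemma~\ref{lem:branch} to compute $\sigma(W)$ and $b_+(W)$, invoke Proposition~\ref{prop:spinc2} to get an invariant spin$^c$-structure with $c_1(\mathfrak{s}_c)=\pi^*(c+a/2)$, apply the equivariant Froyshov inequality from \cite{bh}, and minimize over characteristics $c$ to produce $\eta(a/2)$. Your bookkeeping is in fact cleaner than the published proof, whose intermediate displayed formula contains sign/Betti-number typos (the final inequality there reads $-\sigma(K)/2$ where it should read $+\sigma(K)/2$, consistent with the theorem statement and your computation).
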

\begin{proof}
Set $x = a/2$. The proof is similar to the case that $q$ is an odd prime. The main difference is that we now have $\widetilde{c} = \pi^*( c + x)$, where $c$ is a characteristic on $X$. The Froyshov inequality gives
\[
2 \delta_{j}^{(2)}(-K) \le \sigma(W) - 2(x+c)^2 = -2(x+c)^2 - x^2 - 2b_+(X) + \sigma(K)
\]
where 
\[
j = b_+(W) = g_4(K,X,a) - x^2 + \sigma(K)/2.
\]
Minimising over $c$, we get
\[
\delta_{j}^{(2)}(-K) \le -x^2 + \eta(x) -\sigma(K)/2.
\]
This gives the result.
\end{proof}

If we set $a=0$, then $\eta(x) = \eta(0) = 0$, so we get
\begin{corollary}\label{cor:h2}
\[
g_H(K,X) \ge \theta(K).
\]
\end{corollary}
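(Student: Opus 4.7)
The plan is to derive Corollary \ref{cor:h2} directly from Theorem \ref{thm:in2} by specialising to the homology class $a = 0$, which by definition of the $H$-slice genus gives $g_4(K,X,0) = g_H(K,X)$. The two pieces of bookkeeping required are the evaluation of the quantity $\eta(0)$ appearing in the definition of the shift $m$, and the identification of $\theta(K,0)$ with the invariant $\theta(K)$ introduced in Section \ref{sec:conc}.

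First I would compute $\eta(0) = \min_c\{-c^2 - b_2(X)\}$, the minimum being taken over characteristics $c \in H^2(X;\mathbb{Z})$. Because $X$ is smooth, negative definite, with boundary $S^3$ and $H_1(X;\mathbb{Z}) = 0$, capping $\partial X$ off with a ball and applying Donaldson's diagonalisation theorem shows that $H^2(X;\mathbb{Z})$ is unimodular with intersection form $\mathrm{diag}(-1,\ldots,-1)$. In such a basis a characteristic vector has each coordinate odd, so $-c^2 = \sum c_i^2 \ge b_2(X)$, with equality attained by $c = (\pm 1,\ldots,\pm 1)$. Hence $\eta(0) = 0$, and so $m = -a^2/4 + \eta(a/2) = 0$ when $a = 0$.

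Substituting $a = 0$ and $m = 0$ into Theorem \ref{thm:in2} gives $g_H(K,X) = g_4(K,X,0) \ge \theta(K,0)$, and it only remains to show $\theta(K,0) = \theta(K)$. Unwinding the definition, $j^{(2)}(-K,0)$ is the smallest $j$ with $\delta_j(-K) \le -\sigma(-K)/2$. Property (3) of the $\delta_j$-invariants recalled at the opening of Section \ref{sec:conc} asserts $\delta_j(-K) \ge -\sigma(-K)/2$ for every $j \ge 0$, so the inequality forces equality, which is exactly the condition $\xi_j(-K) = 0$ used to define $j(-K)$. Therefore $j^{(2)}(-K,0) = j(-K)$, the outer $\max\{0,\,\cdot\,\}$ expressions agree, and $\theta(K,0) = \theta(K)$, completing the deduction.

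There is no real obstacle here: the analytic content sits entirely inside Theorem \ref{thm:in2}, and the only input specific to the negative definite hypothesis used in this final step is Donaldson's theorem, needed to evaluate $\eta(0)$.
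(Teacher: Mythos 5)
Your proof is correct and follows the paper's approach: specialise Theorem \ref{thm:in2} to $a=0$, compute $\eta(0)=0$ via Donaldson diagonalisation, and observe that property (3) of the $\delta_j$ invariants forces $j^{(2)}(-K,0)=j(-K)$, hence $\theta(K,0)=\theta(K)$. You simply fill in the bookkeeping that the paper leaves implicit.
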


\begin{corollary}
Let $q$ be a prime and let $a \in H_2(X ; \mathbb{Z})$ be divisible by $q$. Then
\[
g_4(K,X,a) \ge \frac{\ell^{(q)}(-K)}{(q-1)} -\frac{3 \sigma^{(q)}(K)}{4(q-1)} + \frac{(q+1)}{4q}a^2.
\]
Moreover, if $q = 2$ the inequality can be improved to
\[
g_4(K,X,a) \ge \ell^{(2)}(-K) - \frac{3 \sigma(K)}{4} + \frac{3}{8}a^2 - \frac{1}{2}\eta( a/2 ).
\]
\end{corollary}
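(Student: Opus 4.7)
The plan is to combine Theorem \ref{thm:in1} (for odd $q$) or Theorem \ref{thm:in2} (for $q=2$) with Proposition \ref{prop:est} by making the appropriate choice of the auxiliary parameter $m$. Both theorems already give lower bounds on $g_4(K,X,a)$ in terms of the shifted invariant $\theta^{(q)}(K,m)$, and Proposition \ref{prop:est} gives a lower bound on $\theta^{(q)}(K,m)$ in terms of $\ell^{(q)}(-K)$, $\sigma^{(q)}(K)$ and $m$. Chaining the two inequalities eliminates $\theta^{(q)}(K,m)$ and replaces it with an expression involving $\ell^{(q)}(-K)$ and $a^2$.

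For odd $q$, I would take $m=-\tfrac{(q^2-1)}{6q}a^2$, which is precisely the value dictated by Theorem \ref{thm:in1}. Substituting this $m$ into the bound
\[
\theta^{(q)}(K,m)\ge \frac{\ell^{(q)}(-K)}{q-1}-\frac{m}{2(q-1)}-\frac{3\sigma^{(q)}(K)}{4(q-1)}
\]
from Proposition \ref{prop:est} produces an extra term $-\tfrac{m}{2(q-1)}=\tfrac{(q+1)}{12q}a^2$, and adding the term $\tfrac{(q+1)}{6q}a^2$ already present in Theorem \ref{thm:in1} gives the total coefficient $\tfrac{(q+1)}{12q}+\tfrac{2(q+1)}{12q}=\tfrac{(q+1)}{4q}$ on $a^2$, which is exactly what the statement asserts. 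This is a purely arithmetic check and has no real content beyond bookkeeping.

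For the improved $q=2$ inequality, the same strategy is applied, but using Theorem \ref{thm:in2} in place of Theorem \ref{thm:in1}: the prescribed value is $m=-\tfrac{a^2}{4}+\eta(a/2)$. Plugging this into the $q=2$ case of Proposition \ref{prop:est} produces $-\tfrac{m}{2}=\tfrac{a^2}{8}-\tfrac{1}{2}\eta(a/2)$, and combining with the term $\tfrac{a^2}{4}$ from Theorem \ref{thm:in2} yields the coefficient $\tfrac{3}{8}a^2$ and the correction $-\tfrac{1}{2}\eta(a/2)$, as claimed. There is no genuine obstacle here since every step is a direct substitution; the only point that requires any care is tracking the two sources of $a^2$-contribution (one from the genus bound, one from the shift in $m$ inside the $\ell^{(q)}$-estimate) and confirming that these add up correctly. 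The result then follows in a few lines of algebra.
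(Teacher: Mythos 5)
Your proposal is correct and follows exactly the same route as the paper: the paper's proof is precisely to chain Theorem \ref{thm:in1} (resp.\ \ref{thm:in2}) with the estimate from Proposition \ref{prop:est}, and your arithmetic checking the $a^2$-coefficients ($\tfrac{q+1}{12q}+\tfrac{q+1}{6q}=\tfrac{q+1}{4q}$ for odd $q$, and $\tfrac{1}{8}+\tfrac{1}{4}=\tfrac{3}{8}$ plus the $-\tfrac12\eta(a/2)$ correction for $q=2$) is right.
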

\begin{proof}
This follows from Theorems \ref{thm:in1}, \ref{thm:in2} together with the estimate for $\theta^{(q)}(K,m)$ given by Proposition \ref{prop:est}.
\end{proof}

\begin{example}
We compare our slice genus bound for surfaces bounding a $T_{3,6n+1}$-torus knot in a negative definite $4$-manifold with other bounds arising from the $\tau$-invariant and the signature.

Let $X$ be a compact, oriented, smooth negative definite $4$-manifold with $H_1(X ; \mathbb{Z}) = 0$ and with boundary $S^3$. Let $\Sigma \subset X$ be a smooth, oriented, properly embedded surface in $X$ bounding $K = T_{3,6n+1}$ and representing a class $a \in H_2(X , \partial X ; \mathbb{Z}) \cong H^2(X ; \mathbb{Z})$. We assume also that $a$ is divisible by $2$, so $a = 2x$ for some $x \in H^2(X ; \mathbb{Z})$. Choose a basis $e_1, \dots , e_r$ for $H^2(X ; \mathbb{Z})$ in which the intersection form is given by $diag(-1, \dots , -1)$. If $x = x_1 e_1 + \cdots + x_r e_r$ then $x^2 = -(x_1^2 + \cdots + x_r^2)$ and $\eta(x)$ is minus the number of $i$ such that $x_i$ is odd. It follows that $-x^2 + \eta(x)$ is divisible by $4$ and so
\[
\theta(T_{3,6n+1} , -x^2+\eta(x) ) = \max\{ 4n , 6n + x^2/2 - \eta(x)/2 \}.
\]
Hence Theorem \ref{thm:in2} gives the genus bound
\[
g(\Sigma) \ge \max\{ 4n + x^2 , 6n + 3x^2/2 - \eta(x)/2 \}.
\]
In particular, this gives
\begin{equation}\label{equ:theta}
g(\Sigma) \ge 6n + \frac{3x^2}{2} = 6n - \frac{3}{2}( x_1^2 + \cdots + x_r^2 ) + \frac{1}{2} \#\{ i \; | \; x_i \text{ odd} \}.
\end{equation}
Now we compare this with the genus bound in terms of the $\tau$ invariant \cite{os1}. We get $g(\Sigma) \ge a^2/2 + ||a||/2 + \tau(K)$, where $|| a_1 e_1 + \cdots + a_r e_r || = |a_1| + \cdots + |a_r|$. Since $\tau(T_{3,6n+1}) = 6n$, this simplifies to
\begin{equation}\label{equ:tau}
g(\Sigma) \ge 6n + 2x^2 + ||x|| = 6n - 2( x_1^2 + \cdots + x_r ) + ( |x_1| + \cdots + |x_r|).
\end{equation}
One also gets two genus bounds in terms of the signature \cite{gi,vi}:
\begin{equation}\label{equ:sig1}
g(\Sigma) \ge 4n+x^2 = 4n - ( x_1^2 + \cdots + x_r^2)
\end{equation}
and
\begin{equation}\label{equ:sig2}
g(\Sigma) \ge -4n-x^2 - r = -4n - r + (x_1^2 + \cdots + x_r^2).
\end{equation}
We have obtained four genus bounds. Our bound using the $\theta$ invariant is (\ref{equ:theta}). The Ozsv\'ath--Szab\'o bound using the $\tau$ invariant is (\ref{equ:tau}) and the two bounds (\ref{equ:sig1}) and (\ref{equ:sig2}) which use the signature. It is easily seen that (\ref{equ:theta}) is always at least as good as (\ref{equ:tau}) and is often an improvement. It is also easily seen that when $| x^2 |$ is relatively small, say $| x^2 | \le 4n$, our genus bound (\ref{equ:theta}) is a better genus bound than (\ref{equ:sig1}) or (\ref{equ:sig2}). On the other hand, when $|x^2|$ sufficiently large, the bound $(\ref{equ:sig2})$ is much better than all the other bounds. In fact the right hand sides of the bounds (\ref{equ:theta}), (\ref{equ:tau}) and (\ref{equ:sig1}) all become negative when $|x^2|$ is sufficiently large.

\end{example}


\bibliographystyle{amsplain}

\end{document}